\documentclass[10pt]{amsart}
\setlength\textheight{7.7in}
\setlength\textwidth{6.5in}
\setlength\oddsidemargin{0in}            
\setlength\evensidemargin{0in}
\setlength\parindent{0.25in}
\setlength\marginparwidth{0.8in}

\usepackage{amsmath, amssymb, amscd, amsthm, mathrsfs, url ,pinlabel, verbatim, lipsum, wrapfig}
\usepackage[text={6.5in,9.5in}, centering, letterpaper, dvips]{geometry}
\usepackage{color, multirow, dcpic, latexsym, pictexwd, graphicx, epstopdf, tikz-cd, wrapfig, float, xcolor, cancel, subfiles}
\usepackage{verbatim, relsize, latexsym, latexsym, tikz, mdframed, float, mathtools, flowchart, multicol, stmaryrd, caption, capt-of, old-arrows, dirtytalk, footmisc, setspace, enotez, tabularx, pifont, tensor, csquotes}
\usepackage{svg}
\usepackage{fancyhdr, tikz, tikz-cd, setspace, tensor, enumitem, dirtytalk, nicematrix}

\captionsetup{font=small}

\usetikzlibrary{positioning, arrows.meta, knots, braids}

\usepackage{scalerel,stackengine}
\stackMath
\newcommand\reallywidehat[1]{%
\savestack{\tmpbox}{\stretchto{%
  \scaleto{%
    \scalerel*[\widthof{\ensuremath{#1}}]{\kern-.6pt\bigwedge\kern-.6pt}%
    {\rule[-\textheight/2]{1ex}{\textheight}}
  }{\textheight}%
}{0.5ex}}%
\stackon[1pt]{#1}{\tmpbox}%
}

\usepackage[backref=page, linktocpage=true]{hyperref}

\renewcommand*{\backref}[1]{}
\renewcommand*{\backrefalt}[4]{%
    \ifcase #1 (Not cited.)%
    \or        (Cited on page~#2.)%
    \else      (Cited on pages~#2.)%
    \fi}

\definecolor{maroon}{rgb}{0.5, 0.0, 0.0}
\definecolor{darkblue}{rgb}{0.03, 0.27, 0.49}

\hypersetup{
  colorlinks   = true, 
  urlcolor     = maroon, 
  linkcolor    = darkblue, 
  citecolor   = maroon 
}

\usepackage{svg}
\usepackage{subfig}

\usepackage[T1]{fontenc}
\usepackage{lmodern}
\DeclareUrlCommand{\bfurl}{}
\DeclareMathOperator{\Fix}{Fix}
\DeclareMathOperator{\Kaw}{Kaw}
\DeclareMathOperator{\Diff}{Diff}

\DeclareMathOperator{\id}{id}
\DeclareMathOperator{\lk}{lk}

\DeclareMathOperator{\Or}{O}
\DeclareMathOperator{\Wh}{Wh}

\newtheorem{thm}{Theorem}[section]

\newtheorem{lem}[thm]{Lemma}
\newtheorem{cor}[thm]{Corollary}
\newtheorem{prop}[thm]{Proposition}

\theoremstyle{definition}
\newtheorem{defn}[thm]{Definition}

\newtheorem{exmp}[thm]{Example}
\newtheorem{remark}[thm]{Remark}
\newtheorem{question*}{Question}

\theoremstyle{plain}
\newtheorem{introthm}{Theorem}
\newtheorem{introcor}[introthm]{Corollary}

\renewcommand{\int}{\operatorname{int}
}

\newtheorem*{rep@introthm}{\rep@title}
\newcommand{\newrepintrothm}[2]{
\newenvironment{rep#1}[1]{
  \def\rep@title{#2 \ref{##1}}
 \begin{rep@introthm}}
 {\end{rep@introthm}}} 

 \theoremstyle{definition}
 \newrepintrothm{introthm}{Theorem}


\let\epsilon\varepsilon
\theoremstyle{remark}

\newcommand\Z{\mathbb{Z}}
\newcommand\Q{\mathbb{Q}}
\newcommand\R{\mathbb{R}}
\newcommand\C{\mathcal{C}}

\newcommand\AC{\mathcal{AC}}
\newcommand\G{\mathbb{G}}

\usepackage[colorinlistoftodos,textwidth=3cm]{todonotes}

\newlength\Colsep
\setlength\Colsep{10pt}

\title{Every negative amphichiral knot is rationally slice}

\author{Alessio Di Prisa}
\address{Scuola Normale Superiore, 56126 Pisa, Italy}
\email{\url{alessio.diprisa@sns.it}}
\urladdr{\url{https://sites.google.com/view/alessiodiprisa}}

\author{Jaewon Lee}
\address{Department of Mathematical Sciences, KAIST, 34141 Daejeon, Republic of Korea}
\email{\url{freejw@kaist.ac.kr}}
\urladdr{\url{https://mathsci.kaist.ac.kr/~freejw}}

\author{O{\u{g}}uz \c{S}avk}
\address{CNRS and Laboratorie de Math\'ematiques Jean Leray, Nantes Universit\'e, 44322 Nantes, France}
\email{\url{oguz.savk@cnrs.fr}}
\urladdr{\url{https://sites.google.com/view/oguzsavk}}

\date{}

\begin{document}

\begin{abstract}
In 2009, Kawauchi proved that every strongly negative amphichiral knot is rationally slice. However, as shown by Hartley in 1980, there are examples of negative amphichiral knots that are not strongly negative amphichiral. In this paper, we prove that every negative amphichiral link whose amphichiral map preserves each component is rationally slice. Our proof relies on a systematic analysis of the action induced by the negative amphichiral map on the JSJ decomposition of the link exterior. Moreover, we provide sufficient conditions on such an action to deduce when a negative amphichiral knot is either isotopic to, or concordant to, a strongly negative amphichiral knot. In particular, we prove that every fibered negative amphichiral knot is strongly negative amphichiral, answering a question asked by Kim and Wu in 2016 on Miyazaki knots.
\end{abstract}

\maketitle

\section{Introduction}
\label{sec:intro}

The notions of sliceness and concordance for knots were introduced in the seminal article by Fox and Milnor \cite{FM66}. Since then, these concepts have played an important role in the development of low-dimensional topology. See the surveys \cite{Liv05, Hom17, Hom23, Sav24} for a comprehensive discussion.

Certain types of knot symmetries provide examples of interesting phenomena in knot concordance. For instance, every \emph{negative amphichiral knot}, i.e., a knot $K$ invariant under an orientation-reversing diffeomorphism $f$ of $S^3$ mapping the knot $K$ to itself with the opposite orientation, has concordance order at most $2$. If such a map $f$ is chosen to be an involution, we say that $K$ is a \emph{strongly negative amphichiral knot}. The figure-eight knot $4_1$ is the simplest strongly negative amphichiral knot, and hence the simplest non-slice knot of order $2$.

Cochran, based on \cite{FS84}, observed that $4_1$ bounds a smooth disk in a rational ball, i.e., it is \textit{rationally slice}. Kawauchi \cite{Kaw80} also proved that the $(2,1)$-cable of $4_1$ is rationally slice as well. Cha \cite{Cha07} generalized Cochran's argument to prove that infinitely many non-concordant strongly negative amphichiral knots are rationally slice. Later, Kawauchi \cite{Kaw09} eventually showed that every strongly negative amphichiral knot is rationally slice.

While the rational ball constructed by Kawauchi \cite{Kaw09} a priori depends on the choice of a strongly negative amphichiral knot, Levine \cite{Lev23} proved that every such rational ball is diffeomorphic to the same manifold $V$, called \textit{Kawauchi manifold}.\footnote{The closed rational homology $4$-sphere obtained from $V$ by gluing a $4$-ball is often called \textit{Kawauchi manifold} as well. For a closed $4$-manifold $X$, a knot in $S^3$ is also called \textit{slice in $X$} if it bounds a smooth disk in a punctured $X$. These terms are used interchangeably, see \cite{Lev23}.}

On the other hand, it was previously shown by Hartley \cite{Har80} that there exist negative amphichiral knots that are not strongly negative amphichiral. Since the existence of an involution plays a crucial role in both constructions \cite{Kaw09, Lev23}, it is natural to ask whether it is strictly necessary, or alternatively, if any negative amphichiral knot is rationally slice. While Hartley's construction produces many rationally slice knots, it is not immediate to give a general answer to this question because of the large number of possible choices of satellite patterns, which makes the situation rather arbitrary and hard to control. In this paper, we answer this question more generally, for \emph{negative amphichiral links}, i.e., links with an orientation-reversing diffeomorphism of $S^3$ which sends each component of the link to itself with the reversed orientation.

\begin{introthm}\label{thm:link}
    Every negative amphichiral link is rationally slice.
\end{introthm}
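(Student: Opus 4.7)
The plan is to analyze the action induced by the negative amphichiral diffeomorphism $f$ on the JSJ decomposition of the link exterior $E_L$. By the smoothing of finite group actions on $3$-manifolds, together with Mostow rigidity on hyperbolic pieces and the classification of automorphisms of Seifert fibered manifolds, I would first isotope $f$ to a finite-order diffeomorphism, say of order $2n$, which acts geometrically on each JSJ piece and permutes the pieces of the decomposition.

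A convenient dichotomy is whether $n$ is odd or even. When $n$ is odd, the power $f^n$ is orientation-reversing, satisfies $(f^n)^2=\id$, and still reverses the orientation of every component of $L$; thus $L$ is already strongly negative amphichiral and an appropriate link-level extension of Kawauchi's theorem \cite{Kaw09} furnishes the desired rational ball. Since the unique involution of $\langle f\rangle$ is $f^n$, this reduction fails precisely when $n$ is even, which is exactly the phenomenon underlying Hartley's examples \cite{Har80} of non-strongly negative amphichiral knots.

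For the remaining case (even $n$), I would induct on the complexity of the JSJ decomposition. For each $\langle f\rangle$-orbit of JSJ pieces of size greater than one, the pieces can be paired via $f$ and excised through an equivariant rational homology cobordism, reducing the problem to a link whose exterior has strictly fewer JSJ pieces. Iterating, one is reduced to the situation where every JSJ piece is $f$-invariant. On such a piece the orientation-preserving diffeomorphism $f^2$ acts and preserves each component of $L$ with its orientation; combining this $\langle f^2\rangle$-action with the orientation-reversing ``half'' given by $f$ itself should yield a refinement of Kawauchi's double-cover construction producing a rational homology $4$-ball locally at each invariant piece.

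The main obstacle I expect is exactly this final step in the even-$n$ case. Because no $f$-invariant orientation-reversing involution is available, Kawauchi's original quotient construction cannot be applied directly, and one must construct a genuinely new ``higher-order'' rational ball out of the full cyclic symmetry $\langle f\rangle$ of order $2n$. Moreover, ensuring that the local rational balls built at each invariant JSJ piece, which may be of very different geometric type (hyperbolic versus Seifert fibered), glue together equivariantly and rationally acyclically along the JSJ tori is the main technical point, and it is what dictates the hypothesis that the amphichiral map preserve each component of the link.
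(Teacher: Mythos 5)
Your opening step is where the argument breaks: you cannot, in general, isotope the amphichiral map $f$ to a finite-order diffeomorphism of $S^3$. Mostow rigidity only gives finite order for the restriction of $f$ to each hyperbolic JSJ piece; when the exterior is non-geometric (the satellite case, which is precisely the open case), the mapping class of $f$ may have infinite order because the pieces are glued with twisting along the JSJ tori, and there is no periodic representative of the symmetry at all. The paper never assumes global finite order --- it only replaces $f$ by an odd power so that the induced action on the companionship graph and on each hyperbolic piece has $2$-power order (Remark~\ref{rmk:reduced}), and it explicitly notes in the proof of Lemma~\ref{lemma:elementary-graph} that ``$f$ may not be of finite order.'' When an involution is needed, it is built by hand: a model involution $G_v$ on the central piece is glued to the restrictions $f|_{Y_i}$ and their inverses across the $f$-orbits of the outer pieces. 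So your dichotomy ``$n$ odd versus $n$ even'' is not available as stated, and even where it is, the odd case needs the link-level statement that strongly negative amphichiral links of more than one component are slice (Lemma~\ref{lemma:str}), not just Kawauchi's theorem for knots.

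The second, and larger, gap is the even case, which you yourself flag as unresolved: you posit an ``equivariant rational homology cobordism'' excising paired JSJ pieces and a ``higher-order'' refinement of Kawauchi's quotient construction at invariant pieces, but no such $4$-dimensional construction is given, and it is exactly the hard point. The paper avoids needing any new rational ball construction. Its induction is not on the number of JSJ pieces (which need not drop when passing to sublinks) but on the complexity $c(L)=(\lVert E_L\rVert,\#\G_L)$, well-ordered via Thurston--J{\o}rgensen (Corollary~\ref{cor:well-ordered}, Lemma~\ref{lemma:decreasing_complexity}); the reduction step is purely $3$-dimensional: cut the companionship graph along a fixed edge, distinguish coherently versus incoherently directed fixed edges, and transfer concordances across the splice using Lemma~\ref{lemma:splice_concordance} together with Lemma~\ref{lemma:unknotted_concordance}; the base case (no fixed edges) is shown to be strongly negative amphichiral via Lemma~\ref{lemma:elementary-graph}, after which Kawauchi--Levine (knots) or Lemma~\ref{lemma:str} (links) finish. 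Without a concrete replacement for your excision/gluing step, the proposal does not yield a proof.
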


In particular, every negative amphichiral knot is rationally slice. In fact, the inductive part in the proof of Theorem \ref{thm:link}, combined with \cite{Lev23}, implies that every negative amphichiral link is slice in the boundary connected sum of $n$ copies of the Kawauchi manifold $V$ for some $n\ge 0$.

For a link $L$, define $\Kaw(L)$ as the minimal number among such $n$, or $\infty$ if there is no such $n$. Then it is clear that $\Kaw(L)$ is a concordance invariant. When $L$ is a slice knot $K$, $\Kaw(K)=0$. Every strongly negative amphichiral knot $K$ has $\Kaw(K)\le 1$ \cite{Kaw09,Lev23}. For any negative amphichiral knot $K$, Theorem~\ref{thm:link} implies that $\Kaw(K) <\infty$. One can ask the following problem for which we will provide potential candidates, see Section~\ref{sec:weak} for more details.

\begin{question*}\label{question:single-Kaw}
Does there exist a negative amphichiral knot $K$ with $\Kaw(K) > 1$?
\end{question*}

Recall that every non-trivial knot is known to be either a torus knot, a hyperbolic knot, or a satellite knot by Thurston \cite{Thu97}. Any torus knot is obviously not negative amphichiral. Kawauchi \cite{Kaw79} proved that every hyperbolic negative amphichiral knot is strongly negative amphichiral, and hence it is rationally slice by \cite{Kaw09}. Thus, the remaining case of negative amphichiral knots, for which it was not completely known whether they are rationally slice is given by satellite negative amphichiral knots.

More recently, Kim and Wu \cite{KW18} proved that every \emph{Miyazaki knot}, i.e., a fibered negative amphichiral knot with irreducible Alexander polynomial, is rationally slice. More precisely, given a satellite Miyazaki knot $P(K)$, they proved that the pattern $P$ is unknotted, and the companion $K$ is again a Miyazaki knot. Based on induction on $3$-genus, they conclude that such a knot is obtained by iterating satellite operations with unknotted patterns on a hyperbolic Miyazaki knot.

In order to avoid any additional assumption, such as fiberedness and irreducibility of the Alexander polynomial, we consider a generalized satellite operation, known as \textit{splice} of links. A key ingredient in the paper is the \textit{companionship graph} of a link, extensively studied by Budney \cite{Bud06}, which is a labeled and partially directed tree derived from the \textit{JSJ decomposition} \cite{JS78, Joh79} of the link exterior. This graph encodes the information on how the link can be obtained via iterated splicing of other links.

Moreover, the amphichiral map of an amphichiral link induces an action on its companionship graph. Analyzing this action allows us to determine how a given such a link can arise from the splicing of simpler amphichiral links. By introducing a notion of \textit{complexity of links} defined from their companionship graphs, such analysis allows us to proceed by induction in the proof of Theorem~\ref{thm:link}.

\subsection{Fibered negative amphichiral knots}
Recall that every hyperbolic negative amphichiral knot is strongly negative amphichiral \cite{Kaw79}. Following this idea, it is natural to ask whether it is possible to give different sufficient conditions for a negative amphichiral knot to be strongly negative amphichiral. For example, Kim and Wu \cite{KW18} asked if every Miyazaki knot is strongly negative amphichiral.

In the proof of Theorem~\ref{thm:link}, it turns out that if the companionship graph of a negative amphichiral link has no edges fixed by the amphichiral action, then it is strongly negative amphichiral. In particular, since the exterior of a hyperbolic knot has a single JSJ piece, the result by \cite{Kaw79} falls in this case. Motivated by this fact, we define a notion of \textit{totally coherent} JSJ structure with respect to the amphichiral action, which generalizes the cases where there are no fixed edges, and we prove the following.

\begin{introthm}\label{thm:SNACK}
    If a negative amphichiral knot admits a totally coherent JSJ structure, then it is strongly negative amphichiral.    
\end{introthm}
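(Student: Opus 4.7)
The plan is to proceed by induction on the complexity of the companionship graph $\T(K)$, using the same inductive scheme that drives the proof of Theorem~\ref{thm:link}. The objective is to improve the given negative amphichiral map $f$ of $S^3$ to an orientation-reversing involution that preserves $K$ with reversed orientation.

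The base case is when $\T(K)$ consists of a single vertex, so that $E(K)$ is a single JSJ piece. Torus knots are not negative amphichiral, hence $E(K)$ must be hyperbolic, and Kawauchi's theorem \cite{Kaw79} already yields strong negative amphichirality in this case.

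For the inductive step I would consider the induced action $f_*$ on $\T(K)$ and cut $K$ along a suitable $f_*$-invariant collection of JSJ tori. The totally coherent hypothesis should allow this decomposition to express $K$ as an iterated splice of amphichiral links $L_1, \ldots, L_r$ of strictly smaller companionship-graph complexity, each inheriting a totally coherent JSJ structure and a negative amphichiral map coming from the restriction of $f$. The inductive hypothesis (extended as needed to links via the same argument as in Theorem~\ref{thm:link}) then supplies involutions $\tau_i$ realizing the strong negative amphichirality of each $L_i$.

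The main obstacle is to assemble the $\tau_i$ into a single global involution on $S^3$. For an edge of $\T(K)$ in a free $f_*$-orbit the corresponding JSJ pieces are simply exchanged by the ambient map, so no compatibility condition is imposed. The delicate case is a fixed edge $e$: the associated JSJ torus $T_e$ is $f$-invariant, and the inductively produced involutions on its two adjacent pieces must agree on $T_e$. The totally coherent condition is designed exactly to preclude the possible mismatches, by constraining the mapping class of $f|_{T_e}$ as viewed from either side so that, after isotopies supported in collars of $T_e$, the local involutions restrict to a common map on $T_e$ and can be glued. A last verification is that the resulting involution is orientation-reversing on $S^3$ and sends $K$ to itself with reversed orientation, which follows because the gluings are performed along the canonical splice structure of $E(K)$, and each step preserves the distinguished component inherited from $K$.
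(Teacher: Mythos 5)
Your overall skeleton (cut along the fixed JSJ tori, build involutions on the pieces, glue along the fixed tori) is the same as the paper's, but as written the proposal has gaps at exactly the two points where the real work lies, and your inductive framing does not reach them.

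First, the terminal case of your induction is not ``$\G_K$ has a single vertex'' but ``$f$ fixes no edges of $\G_K$'': after you cut along all fixed tori, each resulting piece is a link whose induced amphichiral map fixes a unique vertex and permutes everything else in orbits of even length, and you cannot cut further, because splitting along a non-fixed torus does not leave an amphichiral structure on either side. Kawauchi's hyperbolic result only covers the one-vertex case; for the general no-fixed-edge case the paper's Lemma~\ref{lemma:elementary-graph} does the heavy lifting (case analysis Seifert fibered key-chain versus hyperbolic, Smith theory via Theorem~\ref{thm:smith-orthogonal} and Lemma~\ref{lemma:periodic_unlink}, and the construction of an involution by taking a symmetric map on the central vertex and propagating $f$, $f^{-1}$ around the orbits of the complementary pieces). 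Nothing in your proposal supplies this construction, and your inductive hypothesis (even ``extended to links'') cannot, since Theorem~\ref{thm:link}'s machinery produces concordances, not involutions.

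Second, the gluing step is asserted rather than proved, and the assertion points at the wrong mechanism. Total coherence only constrains the signs $\epsilon_T^v$, i.e.\ whether the induced maps preserve or reverse the orientations of the distinguished components on each side of a fixed torus $T$; it says nothing by itself about the conjugacy class of the restriction of the two piecewise involutions to $T$, which is what you must match to glue. An involution of $T^2$ with fixed circles can never be isotoped to agree with a free one, so the gluing genuinely fails without further input. The paper's fix is the refined conclusion of Lemma~\ref{lemma:elementary-graph}: when $p=1$ the involution on each piece can be chosen with $\Fix\cong S^0$ lying on the distinguished component, so its restriction to every adjacent fixed torus is a \emph{free} involution; coherence then guarantees one side is $(-)$-amphichiral and the other $(+)$-amphichiral on its component, and Hartley's classification (free amphichiral involutions of the torus are unique up to special equivalence and the two types differ by the meridian--longitude exchange, which is exactly the splice gluing map) lets the pieces be matched. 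This fixed-point control and the appeal to Hartley's torus classification are the missing ingredients in your ``after isotopies supported in collars of $T_e$'' step, and they are also why the inductive statement would have to be formulated for $(1,n)$-amphichiral \emph{links} with prescribed fixed-point behavior (as in Lemma~\ref{lemma:coherent_root}), not just for knots.
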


We briefly explain what totally coherent JSJ structure means. Suppose a satellite knot $P(K)\subset S^3$ is negative amphichiral. Regard $S^3$ as the union of a solid torus containing the pattern $P$ and the exterior of $K$ along the torus boundary. When the torus is fixed by the amphichiral map, the companion knot $K$ is either negative or positive amphichiral. Roughly speaking, we call the former situation \textit{coherent} and the latter one \textit{incoherent}, in the general splice setting. When every such a fixed torus in the JSJ decomposition of a negative amphichiral knot exterior is coherent, we say that the knot has a \textit{totally coherent} JSJ structure.

It turns out that every \textit{fibered} negative amphichiral knot indeed falls under the totally coherent condition. Consequently, we obtain the following corollary, which, in particular, answers the question asked in \cite{KW18} on Miyazaki knots.

\begin{introcor}\label{cor:fibered}
Every fibered negative amphichiral knot is strongly negative amphichiral.
\end{introcor}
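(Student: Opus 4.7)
The plan is to deduce Corollary~\ref{cor:fibered} from Theorem~\ref{thm:SNACK} by showing that every fibered negative amphichiral knot admits a totally coherent JSJ structure. Let $K$ be a fibered negative amphichiral knot with amphichiral map $f$ and fiber surface $F$. The fibration $E(K)\to S^1$ is classified by the generator $[F]\in H^1(E(K);\Z)$, which is Poincar\'e dual to the meridian class $[\mu_K]\in H_1(E(K);\Z)$.

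The first step is to show that $f$ preserves the fibration up to isotopy. A direct linking-number computation, using that $f$ reverses both the ambient orientation of $S^3$ and the orientation of $K$, gives $f_*[\mu_K]=[\mu_K]$; hence $f^*[F]=[F]$ by duality, and $f$ can be isotoped to be fibration-preserving. By canonicity of the JSJ decomposition, one then arranges an $f$-equivariant family of JSJ tori. Combined with the classical fact that every companion of a fibered knot is itself fibered (with fibers matching up along the companion torus), this endows each JSJ piece with a fibration compatible with the global one and respected by $f$.

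Next, fix any JSJ torus $T$ preserved setwise by $f$. Since $f(K)=K$ as a set, $f$ preserves the solid torus $V\subset S^3$ bounded by $T$ and containing $K$, and hence restricts to an orientation-reversing diffeomorphism of the companion exterior $E(C)=S^3\setminus\operatorname{int}V$, preserving the fibration of $E(C)$ inherited from that of $E(K)$. The same duality argument now yields $(f|_{E(C)})_*[\mu_C]=[\mu_C]$ in $H_1(E(C);\Z)$. Because $f|_{E(C)}$ is orientation-reversing, the induced automorphism of $H_1(\partial E(C))$ has determinant $-1$, which forces $(f|_{E(C)})_*[\lambda_C]=-[\lambda_C]$. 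Therefore $f$ reverses the orientation of $C$, so $C$ is a negative amphichiral knot and $T$ is coherent.

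Every fixed JSJ torus being coherent, $K$ admits a totally coherent JSJ structure, and the corollary follows from Theorem~\ref{thm:SNACK}. The main technical obstacle is the equivariance in the first step: ensuring that $f$, the family of JSJ tori, and the fibration can be made simultaneously compatible, especially when $f$ permutes some JSJ tori non-trivially and the companions are themselves iterated satellites. Once that is set up, the rest is linking-number and duality bookkeeping.
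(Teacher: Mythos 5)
Your proposal is correct, and its overall strategy is exactly the paper's: reduce Corollary~\ref{cor:fibered} to Theorem~\ref{thm:SNACK} by showing that fiberedness forces a totally coherent JSJ structure, i.e.\ that at every $f$-invariant JSJ torus the companion knot is negative amphichiral. The only difference is how coherence at a fixed torus $T$ is verified. The paper (Lemma~\ref{lemma:fibered}) cuts $\G_K$ along $T$, forms the two-component link $L=P\cup U$ on the root side, and observes that incoherence would make $(L,F_{\G_1})$ negative amphichiral, forcing $\lk(P,U)=0$, which contradicts the nonzero winding number guaranteed by fiberedness via \cite[Theorem~2.1]{HMS08}. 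You work on the companion side instead: $f_*[\mu_K]=[\mu_K]$ gives $f^*$ fixing the fibration class of $E_K$, its restriction to $E(C)$ is $w$ times the generator of $H^1(E(C))$ with $w\neq 0$ by the same classical fibered-satellite fact, hence $(f|_{E(C)})_*[\mu_C]=[\mu_C]$, and the determinant $-1$ on $H_1(T)$ forces $(f|_{E(C)})_*[\lambda_C]=-[\lambda_C]$. Since $\lk(P,U)=w$, these are equivalent computations: both say that orientation-reversal together with $w\neq0$ rules out preserving the companion's orientation. Two points of care in your write-up: the derivation of $(f|_{E(C)})_*[\mu_C]=[\mu_C]$ is not literally ``the same duality argument'' as for $K$ (there, $f(K)=-K$ was a hypothesis; here the behavior on the companion is the conclusion), and it genuinely requires $w\neq0$ --- this is precisely where fiberedness enters, so it should be said explicitly. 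Also, the step you flag as the main technical obstacle, making $f$, the JSJ tori, and the fibration simultaneously compatible, is unnecessary: one only needs the $f$-invariance of the JSJ family (standard, as in Section~\ref{sec:3}) and the behavior of $f^*$ on the cohomology class of the fibration, so naturality of restriction (or the paper's one-line linking-number argument) already suffices.
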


Miyazaki \cite{Miy94} proved that the $(2n, 1)$-cable of every Miyazaki knot with $n > 0$ is not ribbon. Kim and Wu \cite{KW18} extended this result to all non-trivial linear combinations of such cables. While a few of them are known to be not slice \cite{DKMPS24, ACMPS23, KPT24, KPT25}, many of them remain as a potential counterexample to the slice-ribbon conjecture. One of the difficulties in obstructing such knots being slice comes from the fact that they are \textit{strongly} rationally slice, i.e., the inclusion from the knot complement to the slice disk complement induces an isomorphism on the free part of first homology in integer coefficients. In this context, one can ask a general question:

\begin{question*}
  Is the $(2n, 1)$-cable of every non-slice fibered negative amphichiral knot not ribbon?
  \label{question:ribbon}
\end{question*}

\subsection{Weakly negative amphichiral knots}
\label{sec:weak}

While Theorem~\ref{thm:link} immediately implies that every negative amphichiral knot is rationally concordant to any other strongly negative amphichiral knot, it is not known if such a concordance exists in $S^3\times [0, 1]$. More precisely, one might ask:
\begin{question*}\label{question:con-to-SNACK}
  Is every negative amphichiral knot concordant to a strongly negative amphichiral knot?
\end{question*}

If such a concordance always exists, then the answer to Question~\ref{question:single-Kaw} is automatically negative. In this direction, we find a sufficient condition on the companionship graph for a negative amphichiral knot to be concordant to a strongly negative amphichiral knot. We call such a condition \textit{properly incoherent}, and prove:

\begin{introthm}\label{thm:SNACK-concordance}
    If a negative amphichiral knot admits a properly incoherent JSJ structure, then it is concordant to a strongly negative amphichiral knot.
\end{introthm}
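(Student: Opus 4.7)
The plan is to proceed by induction on an appropriate complexity function on the companionship graph, running in parallel with the strategies used in the proofs of Theorem \ref{thm:link} and Theorem \ref{thm:SNACK}. In the base case, where the JSJ structure is already totally coherent, Theorem \ref{thm:SNACK} provides that $K$ is itself strongly negative amphichiral, so the constant concordance suffices. The inductive step is where the properly incoherent hypothesis does real work: it should identify a distinguished incoherent fixed JSJ torus $T$ whose "defect" relative to coherence can be repaired via concordance rather than via isotopy.

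For the inductive step, I would choose an innermost such $T$ allowed by the properly incoherent hypothesis. Cutting along $T$ writes $K$ as the splice of a pattern link $P$ on one side with a companion link $J$ on the other, and the restricted amphichiral map on the companion side exhibits $J$ as a \emph{positive} amphichiral link, which is the essential difference from the coherent situation of Theorem \ref{thm:SNACK}. The "proper" condition should control what this companion can look like: for instance, guaranteeing that $J$ (and the framing identifications across $T$) is symmetric enough that a standard trace-of-symmetry construction yields a concordance from $J$ to a link $J'$ which, when spliced back into $P$, produces a new negative amphichiral knot $K'$ whose companionship graph is strictly simpler (with either fewer incoherent fixed edges, or coherent at $T$, or a reduced graph-theoretic complexity in the sense already used for Theorem \ref{thm:link}).

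Concretely, since $J$ is positive amphichiral the mapping cylinder of its positive amphichiral map, capped off appropriately, provides a concordance from $J$ to $-J$; threading this concordance through the solid torus structure dual to $T$ turns it into a concordance from $K$ to the new knot $K'$, with the global amphichiral map extending across the concordance. Then one verifies that $K'$ still satisfies the properly incoherent hypothesis, applies the inductive hypothesis (or Theorem \ref{thm:SNACK} if $K'$ has become totally coherent) to obtain a concordance from $K'$ to a strongly negative amphichiral knot $K''$, and concatenates.

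The main obstacle is the inductive step itself: one must verify that the local concordance replacing the incoherent companion $J$ genuinely extends to a concordance of $K$ in $S^3\times[0,1]$ (not merely in a rational homology cobordism, which would only reprove Theorem \ref{thm:link}), that the negative amphichiral map globally survives this replacement, and that the combinatorial effect on the companionship graph is a strict decrease in the well-founded complexity used for induction. Formulating "properly incoherent" so that these three requirements hold simultaneously—while being broad enough to be a useful sufficient condition—is the subtle point, and I expect it to dictate the precise geometric hypothesis on how fixed incoherent JSJ tori are allowed to sit inside the graph.
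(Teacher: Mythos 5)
Your proposal has a genuine gap at the heart of its inductive step. The pivotal claim that, because the companion $J$ on the far side of an incoherent fixed torus is positive amphichiral, \say{the mapping cylinder of its positive amphichiral map, capped off appropriately, provides a concordance from $J$ to $-J$} does not hold: a positive amphichiral map is an orientation-reversing diffeomorphism of $S^3$ carrying $J$ to itself with the same orientation, so it only exhibits $J$ as isotopic to its mirror; its mapping cylinder is not a product $S^3\times[0,1]$ (the map reverses the orientation of $S^3$), and positive amphichirality by itself gives no control on the concordance class of $J$ --- in particular it does not make $J$ concordant to $-J$, to its reverse, or to anything simpler. Since the rest of your argument (extending the local replacement across $T$, carrying the amphichiral map along, and decreasing a complexity) all hangs on this step, the induction never gets started. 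You have also misread the hypothesis: \emph{properly incoherent} is not a condition about an innermost incoherent torus; it says that after deleting the maximal coherent subtree $\G_{max}$, every complementary piece $\G_i$ of $\G_K-\G_{max}$ has an \emph{incoherent root}, i.e., all fixed edges at that root are incoherently directed into it.

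The paper's proof needs no induction and no local repair. It splits $\G_K$ along $\mathcal{E}(\G_{max})$: by Lemma~\ref{lemma:coherent_root} the link $L(\G_{max})$ is a strongly $(1,n)$-amphichiral link $J_0\cup U_1\cup\cdots\cup U_n$ with $U_1\cup\cdots\cup U_n$ an unlink, and $K=(K_1,\dots,K_n)\bowtie L(\G_{max})$, where each companion $K_i$ has incoherent root. The key ingredient you are missing is Lemma~\ref{lemma:incoherent_root}: a negative amphichiral knot with incoherent root is genuinely \emph{slice}. This is proved by cutting the fixed edges at the root, applying Lemma~\ref{lemma:elementary-graph} to obtain a strongly negative amphichiral involution on the resulting root link, which has at least two components, so its fixed-point set is a $2$-sphere; Lemma~\ref{lemma:unknotted_concordance} then yields honest concordances in $S^3\times I$ (this is exactly how the paper addresses your worry about staying in $S^3\times[0,1]$ rather than a rational homology cobordism), and Lemma~\ref{lemma:splice_concordance}~(2) shows the knot is concordant to the unknot. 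Feeding these slice concordances of the $K_i$ into Lemma~\ref{lemma:splice_concordance}~(1) shows $K$ is concordant in $S^3\times I$ to $U^n\bowtie L(\G_{max})=J_0$, which is a strongly negative amphichiral knot; the degenerate case where the root itself is not coherent reduces to Lemma~\ref{lemma:incoherent_root} directly. So the role of the properly incoherent hypothesis is to make every piece outside $\G_{max}$ slice outright via the fixed $2$-sphere of an involution, not to permit a torus-by-torus correction.
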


We call a negative amphichiral knot \textit{weakly negative amphichiral} if it is not strongly negative amphichiral. Since the properly incoherent condition is quite restrictive, we can construct many weakly negative amphichiral knots which do not admit properly incoherent JSJ structures in Propositions~\ref{prop:cand1} and \ref{prop:cand2}. For the resulting knots in Proposition~\ref{prop:cand1}, we can check that at least they are not slice, while the ones in Proposition~\ref{prop:cand2} are \textit{topologically} slice. We do not know whether they are smoothly concordant to some strongly negative amphichiral knot.

We note that if the answer to Question~\ref{question:con-to-SNACK} is negative in general, then it would be difficult to find an obstruction, since most concordance properties of strongly negative amphichiral knots are also shared by negative amphichiral knots (for example, see \cite[Theorem~3.1]{Har80b}). A possible approach in this direction would be to find an example for Question~\ref{question:single-Kaw}. We remark that the knots constructed in Propositions~\ref{prop:cand1} and \ref{prop:cand2} may be potential examples for Question~\ref{question:single-Kaw} and hence, potential counterexamples to Question~\ref{question:con-to-SNACK}.

\subsection{Remark on $2$-torsions in the knot concordance group}
Knot concordance classes form an abelian group $\C$, called the \emph{knot concordance group}, with the operation induced by connected sum, and inverse given by taking orientation-reversed mirror. By definition, every non-slice negative amphichiral knot is of order $2$ in $\C$. Gordon asked whether the converse holds \cite[Problem~16]{Gor78}, namely if every knot of order $2$ in $\C$ is concordant to a negative amphichiral knot. See also \cite[p. 266]{FM66} and \cite[Problem~1.94]{Kir97}. It is also not known if there exists any non-slice knot of finite order in $\C$ other than $2$ \cite[Problem~1.32]{Kir97}. In this subsection, we make remarks on several related questions.

The \textit{rational knot concordance group} $\C_\Q$ is defined as the quotient group of $\C$ modulo rationally slice knots. The difference between $\C$ and $\C_\Q$ has been studied extensively \cite{Cha07, Kaw09, CHL11, BD12, HKL16, AMMMPS22, Col22, Mil22, ACMPS23, DKMPS24, KPT24, Liv24, HKP25, KPT25}, but $\C_\Q$ is also of independent interest \cite{CO93, CK02, Cha07, CFHH13, KW18, Lev23, Kim23, CHKPW24, Lee24, Lee25}. Since every known knot of order $2$ in $\C$ is rationally slice by Theorem~\ref{thm:link}, an analogous question in $\C_\Q$ would arise as follows:

\begin{question*}\cite[Question~2]{Lee24}
\label{question:2-torsion-CQ}
  Does there exist a $2$-torsion element in $\C_\Q$?
\end{question*}

As the diagram below shows, knots of order $2$ in the \textit{algebraic rational concordance group} $\AC_\Q$, which is defined by Cha \cite{Cha07}, are potential candidates for Question~\ref{question:2-torsion-CQ}. However, it was proved that infinitely many such knots are linearly independent in $\C_\Q$ in \cite{Lee24}.
\begin{center}
\begin{tikzcd}
&\C \ar[twoheadrightarrow]{r}\ar[twoheadrightarrow]{d} &\C_\Q \ar[twoheadrightarrow]{d}\\
&\AC \ar[twoheadrightarrow]{r} &\AC_\Q
\end{tikzcd}
\end{center}
On the other hand, weakly negative amphichiral knots could have been regarded as more likely candidates for Question~\ref{question:2-torsion-CQ} since they have order at most $2$ in $\C$, and hence, in $\C_\Q$. Note that from Theorem~\ref{thm:link} we are able to exclude all such candidates.

We remark that Question~\ref{question:2-torsion-CQ} still remains unresolved. If the answer to Gordon's question turns out to be negative, i.e., if there exists a knot of order $2$ in $\C$ which is not concordant to any negative amphichiral knot, then this would also be a potential candidate for a $2$-torsion element in $\C_\Q$.

Gordon's question and Question~\ref{question:2-torsion-CQ} are also related to the geography and botany problem of knot Floer complexes \cite{OS04, Ras03}. See for instance \cite{HW18, Pop23}. Let $\mathfrak{K}$ be the \textit{local equivalence group} of knot Floer complexes over $\mathbb{F}[U, V]$ without the involution defined in \cite{Zem19}.\footnote{The notion of local equivalence of knot Floer complexes over $\mathbb{F}[U, V]$ is equivalent with the \textit{stable equivalence} in \cite{Hom17} or the \textit{$\nu^+$-equivalence} in \cite{KP18}. A similar notion for knot Floer complexes over $\mathbb{F}[U, V]/UV$ was previously defined in \cite{Hom14} as the \textit{$\varepsilon$-equivalence.}} Recall that concordance invariants such as $\tau$-invariant \cite{OS03}, $\nu$-invariant \cite{OS11}, $\epsilon$-invariant \cite{Hom14}, $\nu^+$-invariant \cite{HW16}, and $\Upsilon$-invariant \cite{OSS17} from the knot Floer complex without the involution are determined by the local equivalence class of $[CFK(K)]\in \mathfrak{K}$. While there exists a knot Floer complex $C \in \mathfrak{K}$ described in Figure~\ref{fig:CFK} such that $C$ is of order $2$ in $\mathfrak{K}$ but $\nu^+(C) \neq 0$ \cite[Remark~2.1]{Hom15}, it is not known if there exists a knot $K$ such that $[CFK(K)] = C$.

\begin{figure}[h]
  \includesvg{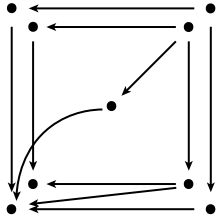}
  \caption{A knot Floer complex $C$ of order $2$ with $\nu^+(C) = 1$.}
\label{fig:CFK}
\end{figure}

While $\tau(K)$, $\nu(K)$, $\epsilon(K)$, and $\Upsilon_K(t)$ vanish for any knot $K$ of finite concordance order, it is not known whether $\nu^+$-invariant or its generalization $\nu_n$-invariant \cite{Tru19} vanish for such $K$. On the other hand, the local equivalence class $[CFK(K)]$ is trivial if $K$ is rationally slice \cite{Hom17} (see also \cite[p. 2]{HKPS22}). See the diagram below.

\begin{center}
\begin{tikzcd}
&\C \ar{r}\ar[twoheadrightarrow]{d} &\mathfrak{K}\\
&\C_\Q \ar[dashed]{ur}
\end{tikzcd}
\end{center}
Thus, we obtain the following immediate corollary from Theorem~\ref{thm:link}:
\begin{introcor}
    If a knot $K$ is negative amphichiral, then the image $[CFK(K)]$ in $\mathfrak{K}$ is trivial.
\end{introcor}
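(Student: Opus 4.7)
The plan is to simply chain together two results that are already available in the excerpt, so there is essentially no obstacle to overcome here; the corollary is a direct consequence and my proof proposal is short.

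First, I would invoke Theorem~\ref{thm:link} applied to the one-component case: a negative amphichiral knot $K$ is in particular a negative amphichiral link whose amphichiral map preserves each component (trivially so, since there is only one component), so Theorem~\ref{thm:link} implies that $K$ is rationally slice.

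Second, I would cite the fact, already recalled in the paragraph immediately preceding the corollary, that the local equivalence class $[CFK(K)]\in\mathfrak{K}$ is trivial whenever $K$ is rationally slice; this is due to Hom \cite{Hom17} (see also \cite{HKPS22}). Combining this with the previous step immediately gives that $[CFK(K)]$ is trivial in $\mathfrak{K}$, as claimed. There is nothing further to verify: the work is in Theorem~\ref{thm:link} itself, together with the already-known vanishing of the local equivalence class for rationally slice knots.

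Since the statement is advertised as an immediate corollary, the only risk is a purely expository one, namely making sure the cited vanishing result from \cite{Hom17} is stated in the form we need (triviality in $\mathfrak{K}$, the local equivalence group over $\mathbb{F}[U,V]$ from \cite{Zem19}), and that the footnote clarifying the equivalence of the various notions of local/stable/$\nu^+$-equivalence covers the setting used here. Beyond that bookkeeping, no additional argument is required.
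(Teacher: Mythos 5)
Your proposal is correct and follows exactly the paper's route: Theorem~\ref{thm:link} gives that every negative amphichiral knot is rationally slice, and the vanishing of the local equivalence class $[CFK(K)]\in\mathfrak{K}$ for rationally slice knots (as recalled from \cite{Hom17}, see also \cite{HKPS22}) then yields the corollary. Nothing further is needed.
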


\noindent In particular, for any negative amphichiral knot $K$, we have $$\nu^+(K) = \nu^+(\overline{K}) = \nu_n(K)=\nu_n(\overline{K})=0 \text{ for any integer }n,$$

\noindent which answers \cite[Question~3]{KW18} affirmatively.

Since knots concordant to a negative amphichiral knot are the only known knots of finite concordance order in $\C$, it still remains open whether there exists a knot of non-trivial finite order in $\mathfrak{K}$. Since the homomorphism $\mathcal{C}\rightarrow\mathfrak{K}$ factors through $\C_\Q$, if the answer to Question~\ref{question:2-torsion-CQ} is positive, it would give an interesting candidate for such a knot.

Note that it was proved that not all rationally slice knots are of finite order in $\C$ \cite{HKPS22}. However, Theorem~\ref{thm:link} implies that every known knot of finite order in $\C$ is rationally slice. We close the introduction with the following general question, also highlighted by Kim and Wu \cite[Question~2]{KW18}.

\begin{question*}
  If a knot $K$ has finite order in $\C$, then is $K$ rationally slice?
\end{question*}

\subsection*{Notations} 
Let $L$ be a link in $S^3$ indexed by a finite set $A$.
\begin{itemize}
    \item For a subset $A'\subset A$, $L_{A'}$ denotes the sublink of $L$ indexed by $A'$.
    \item For an element $a\in A$, $L_a$ denotes the component of $L$ indexed by $a$.
    \item $|L|$ denotes the number of the components of $L$.
    \item $\overline{L}$ denotes the mirror image of $L$.
    \item For an oriented knot $K$, $\pm K$ denotes the same knot with the same or the opposite orientation, respectively.
    \item For a smoothly embedded oriented submanifold $Y$ in $X$, the tubular neighborhood of $Y$ in $X$ is denoted by $\nu Y$. The complement $X-Y$ denotes the closure of $X-\nu Y$.
    \item The link exterior $S^3-L$ is denoted by $E_L$ unless otherwise stated.
    \item The index set $A$ of the link $L$ is occasionally regarded as the set of the boundary components of $E_L$, i.e., $$A=\{\partial \nu L_a\ |\ a\in A\}.$$
    \item We use the shorthand notation $RHX$ for a manifold $Y$ of the same dimension with $X$ such that $H_*(Y;R) \cong H_*(X;R)$.
    \item $U^n$ denotes an $n$-component unlink unless otherwise stated.
    \item We denote the interior of $X$ by $\int(X)$.
\end{itemize}

\subsection*{Organization} 

In Section~\ref{sec:2}, we provide preliminaries on JSJ decompositions and companionship graphs, Gromov norms, and the Smith theory. We also introduce the notion of the complexity of links and prove basic properties therein. In Section~\ref{sec:3}, we study the actions of the amphichiral maps on the companionship graphs and prove several essential properties. We also define totally coherent and properly incoherent JSJ structure of negative amphichiral knot which appear in Theorems~\ref{thm:SNACK} and \ref{thm:SNACK-concordance}. In Section~\ref{sec:4}, we consider concordance of spliced links in a general $4$-manifold and prove Theorem~\ref{thm:link}. In Section~\ref{sec:5}, we prove Theorems~\ref{thm:SNACK} and ~\ref{thm:SNACK-concordance}, and also Corollary~\ref{cor:fibered}. In Section~\ref{sec:6}, we present potential candidates for Question~\ref{question:single-Kaw}: weakly negative amphichiral knots that may not be slice in the Kawauchi manifold $V$.

\subsection*{Acknowledgments} 

This project was initiated during the 2024 session of the Institut Fourier Summer School in Grenoble, and we would like to thank the organizers of the event. We appreciate Jae Choon Cha and JungHwan Park for valuable comments. The second author is partially supported by the Samsung Science
and Technology Foundation (SSTF-BA2102-02) and the NRF grant RS-2025-00542968.


\section{Preliminaries}
\label{sec:2}

In this section, we give several preliminaries. We first recall the JSJ decompositions of link exteriors and its associated companionship graphs. Next, we introduce the notion of complexity of links, depending on Gromov norms of the exterior and the number of vertices in companionship graphs. We then prove several properties of complexity of links, including monotonicity and well-ordering. We finally recall several results on the Smith theory and prove a lemma relating the cardinality of the orbits of the components of a certain hyperbolic link under a rotation. 

\subsection{Companionship graphs of JSJ decompositions and splicing operation for links}
\label{sec:jsj}
In this subsection, we mainly follow Budney's survey article \cite{Bud06}. One can also consult the manuscript by Bonahon and Siebenmann \cite{BS79} for fundamental results and basic notions.

Let $L \subset S^3$ be a non-split link with the exterior $E_L=S^3 - L$. Then, up to isotopy, the \emph{JSJ decomposition of} $E_L$ is given by a unique minimal set of disjoint incompressible tori $\mathcal{T}=\{T_1,\ldots, T_n\}$ such that
\begin{itemize}
    \item each component of $E_L - \cup_i T_i$ is either Seifert fibered or hyperbolic,
    \item each $T_i$ is not boundary parallel,
    \item each pair $T_i$ and $T_j$ are not parallel.
\end{itemize} 

We first introduce the JSJ graph that records the information of JSJ decomposition of a link exterior. We call each component of $E_L- \cup_i \ T_i$ a \textit{JSJ piece} and each torus $T_i\in \mathcal{T}$ a \textit{JSJ torus}.

\begin{defn}\cite[Definition 5]{Bud06}
\label{defn:JSJ graph}
For a non-split link $L$ in $S^3$ indexed by $A$, the \emph{JSJ graph $G_L$ of $L$} is a partially directed graph defined as follows: 
\begin{itemize}
    \item the vertex set is the set of the JSJ pieces of $E_L$,
    \item the edge set is $\mathcal{T}$, the set of the JSJ tori of $E_L$,
    \item each $T\in \mathcal{T}$ joins vertices containing $T$,
    \item The edge $T$ joining $v$ and $w$ is directed from $v$ to $w$ if exactly one of the components of $S^3-T$ is a solid torus $Y$, and $w\subset Y$.
\end{itemize}
For a split link $L$ indexed by $A= \sqcup_{i=1}^k A_i$ so that each sublink $L_{A_i}$ is non-split, $E_L = \#_{i=1}^k E_{L_{A_i}}$ gives the prime decomposition of $E_L$. In this case define the JSJ graph $G_L$ of $L$ as $\sqcup_{i=1}^k G_{L_{A_i}}$.
\end{defn}

From now on, by abuse of notation, we use the same symbol, such as $v$, to denote both a JSJ piece and a vertex. Similarly, the symbol $T$ will denote both a JSJ torus and an edge. Notice that since every torus in $S^3$ is separating, for a non-split link $L$, the graph $G_L$ is always a tree. Since any such a torus bounds at least one solid torus, the undirected edge bounds solid tori in both sides.

For a $3$-manifold $M\subset S^3$ whose boundary is a disjoint union of tori, let $T$ be a component of $\partial M$, and let $C$ be a component of $S^3 - M$ containing $T$. An essential curve $c \subset T$ is called a \emph{peripheral curve for $M$} if there exists a properly embedded surface $S$ in $C$ such that $c = \partial S$. 

\begin{lem} \cite[Proposition 3]{Bud06}\label{lemma:untwisted re-embedding}
Let $M$ be a $3$-manifold in $S^3$ with boundary tori. Then, up to isotopy, there exists a unique orientation-preserving embedding $f:M\to S^3$ such that
\begin{itemize}
    \item $f(M) = E_L$ for some link $L$ in $S^3$,
    \item $f$ maps peripheral curves of $\partial M$ to peripheral curves of $\partial (f(M))$.
\end{itemize}
\end{lem}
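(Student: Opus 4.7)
The plan is to construct the re-embedding by Dehn filling along the peripheral curves, and to establish uniqueness via the rigidity of orientation-preserving self-diffeomorphisms of $S^{3}$.

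For existence, the first step is to verify that on each boundary torus $T_i$ of $M$, the peripheral curve $c_i$ is well defined as a single unoriented essential slope, up to isotopy. This follows from a half-lives-half-dies argument applied to the complementary region $C$ of $M$ containing $T_i$: since $C$ is a compact oriented $3$-manifold embedded in $S^{3}$, the kernel $\ker\bigl(H_1(\partial C;\mathbb{Q})\to H_1(C;\mathbb{Q})\bigr)$ restricts to a rank-one subspace on each boundary torus, spanned by a primitive class, which we take as $c_i$.

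The main construction is to attach a solid torus $V_i$ to each $T_i$ with meridian identified with $c_i$, yielding a closed manifold $\tilde{M}:=M\cup_{T_i}V_i$; the desired embedding will be the inclusion $f:M\hookrightarrow\tilde{M}$ together with an identification $\tilde{M}\cong S^{3}$, and $L$ will be the link of cores of the $V_i$'s. The heart of the argument is showing $\tilde{M}\cong S^{3}$. I would proceed one complementary region at a time: the construction amounts to replacing each $C_j\subset S^{3}-M$ by the disjoint union of solid tori $\sqcup_{T\subset\partial C_j}V_T$ glued with the peripheral meridians. Using Mayer-Vietoris and van Kampen, one checks that this substitution preserves triviality of $H_1$ and $\pi_1$; combined with the Poincaré conjecture, this yields $\tilde{M}\cong S^{3}$. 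A useful simplification throughout is the classical fact that every embedded torus in $S^{3}$ bounds a solid torus on at least one side, which allows an inductive reduction by first peeling off any solid-torus complementary region (whose meridian already equals the peripheral curve, so no change is required there).

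For uniqueness, suppose $f_1,f_2:M\to S^{3}$ are two such embeddings with $f_k(M)=E_{L_k}$. The composition $f_2\circ f_1^{-1}:E_{L_1}\to E_{L_2}$ is an orientation-preserving diffeomorphism mapping each meridian of $L_1$ to a meridian of $L_2$, since these are the images under $f_k$ of the intrinsically defined peripheral curves of $\partial M$. Such a diffeomorphism extends over the tubular neighborhoods $\nu L_k$ to an orientation-preserving self-diffeomorphism $\Phi:S^{3}\to S^{3}$. Since $\pi_0(\mathrm{Diff}^{+}(S^{3}))=1$ (by Cerf's theorem, or the Smale conjecture in the smooth category), $\Phi$ is isotopic to the identity, so $f_1$ and $f_2$ are isotopic as embeddings.

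The step I expect to be the main obstacle is verifying that $\tilde{M}\cong S^{3}$, as the complementary regions can be topologically complicated (in particular, they need not be solid tori themselves). The inductive peeling-off of solid-torus regions, combined with a careful Mayer-Vietoris and van Kampen bookkeeping showing that Dehn filling along peripheral slopes exactly undoes the complementary piece at the level of $\pi_1$ and $H_1$, is what must do the work.
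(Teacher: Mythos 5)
The paper itself gives no proof of this lemma (it simply cites Budney), and your overall plan is the natural one: Dehn fill each boundary torus along its peripheral slope, identify the filled manifold with $S^3$, let $L$ be the cores of the filling tori, and get uniqueness by extending $f_2\circ f_1^{-1}$ over the tubular neighborhoods and quoting $\pi_0\Diff^+(S^3)=1$. The uniqueness half is fine. The existence half, however, has a genuine gap exactly at the step you flag: you never prove $\tilde{M}\cong S^3$, and the tools you propose cannot do it in the form stated. Triviality of $\pi_1\bigl(M\cup\bigcup_j C_j\bigr)$ does not imply that the normal closure of the peripheral slopes in $\pi_1(M)$ is all of $\pi_1(M)$: the gluing maps $\pi_1(T_i)\to\pi_1$ of the two sides need not be injective, so no Mayer--Vietoris/van Kampen ``bookkeeping'' extracts $\pi_1(\tilde M)=1$ from $\pi_1(S^3)=1$ without using the actual structure of the complementary regions. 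Your preliminary claim is also unjustified and false at the stated level of generality: for a complementary region $C$ diffeomorphic to the Hopf link exterior, $H_1$ of each boundary torus injects into $H_1(C;\Q)$, so the kernel meets neither torus in a rank-one subspace and no peripheral curve exists at all on either torus.

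What rescues both points is the connectivity of $M$ (implicitly required anyway, since $E_L$ is connected), which you never invoke. Since the interior of $M$ lies on one side of each boundary torus $T_i$, the complementary region meeting $T_i$ is exactly the closure $X_i$ of the far side of $T_i$; in particular it has a single boundary torus, and by Alexander's theorem it is either a solid torus or the exterior of a nontrivial knot $J_i$ whose other side is a solid torus $Y_i$ containing all of $M$. This gives existence and uniqueness of the peripheral slope for free (the meridian, resp.\ the Seifert longitude of $J_i$), and it is also what proves the filling is $S^3$: solid-torus regions are simply reglued by meridian filling, while if $X_i$ is a knot exterior one replaces it, one torus at a time, by a solid torus glued along the Seifert longitude; since $S^3=Y_i\cup_{T_i}X_i$ with $Y_i$ a solid torus, the replacement produces a genus-one Heegaard splitting in which the meridian of the new solid torus meets the meridian of $Y_i$ once, hence again $S^3$, with the new complementary region a solid torus whose meridian is the old peripheral curve and with all other regions and peripheral curves untouched. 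Inducting over the boundary tori yields $\tilde M\cong S^3$ directly, with no appeal to the Poincar\'e conjecture. Without this structural input --- each complementary region of a connected $M$ is the entire far side of one torus, so a solid torus or a knot exterior with solid-torus outside --- your argument does not close, and with it the homological ``half-lives-half-dies'' discussion becomes unnecessary.
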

\noindent Such an embedding is said to be the \emph{untwisted re-embedding} of $M$. We will denote the link corresponding to the untwisted re-embedding of $M$ by $L(M)\subset S^3$.

\begin{defn}\cite[Definition~6]{Bud06}
\label{defn:companionship-graph}
For a non-split link $L$ in $S^3$ indexed by $A$, let $\mathcal{T}$ be the JSJ tori of $E_L$. We define the \emph{companionship graph $\G_L$ of $L$} as follows:
\begin{itemize}
    \item The underlying partially directed graph without its vertex labels is the JSJ graph $G_L$ of $L$.

    \item Each vertex $v$ is labeled by the corresponding link $L(v)$, which is obtained from Lemma~\ref{lemma:untwisted re-embedding}.

\end{itemize}

\noindent The link $L(v)$ is indexed by the disjoint union of
\begin{itemize}
  \item the subset $A(v)$ of $A$ corresponding to the components of $v \cap \partial E_L$: $$A(v)=\{i\in A\ |\ \partial(\nu L_i)\subset \partial v\},$$

  \item the subset $\mathcal{E}(v)$ of edges of $G_L$ incident to $v$: $$\mathcal{E}(v) = \{T\in \mathcal{T}\ |\ T\subset \partial v \}.$$
\end{itemize}

\noindent The link $L(v)$ is oriented as follows. For the untwisted re-embedding $f:v\rightarrow E_{L(v)}\subset S^3$, it is enough to determine the orientations of the preferred longitudes on $f(T_i)$ for the boundary components $T_i$ of $v$.
\begin{itemize}
  \item For $i\in A(v)$, the orientation of $L(v)_i$ is induced from that of the component $L_i$ of $L$, namely, the image of the \textit{oriented} preferred longitude of $L_i$ under $f$.
  \item For $T \in \mathcal{E}(v)$ with the other endpoint $w$, let $g$ be the untwisted re-embedding of $w$. Let $\ell_v$ and $\ell_w$ be the preferred longitudes of $L (v)_T$ and $L(w)_T$, respectively. Let $\widetilde{f^{-1} (\ell_v)}$ be a parallel copy of $f^{-1}(\ell_v)$ in $\int(w)$, and vice versa. We choose the orientations of $L(v)_T$ and $L(w)_T$ so that $$lk (\widetilde{f^{-1} (\ell_v)}, g^{-1}(\ell_w)) = lk (f^{-1} (\ell_v), \widetilde{g^{-1} (\ell_w)}) = 1.$$
\end{itemize}
\noindent For a split link $L$ indexed by $\sqcup_i^k A_i$ such that each sublink $L_{A_i}$ is non-split, we set $\G_L = \sqcup_{i=1}^k \G_{L_{A_i}}$.
\end{defn}
Note that reversing both orientations of $L(v)_T$ and $L(w)_T$ also satisfies the above condition. Up to this pairwise choice for each edge, the orientations of all components of $L(v)$ are well-defined. We regard $\G_L$ modulo this ambiguity.

\begin{exmp}\label{ex:knot}
When $K$ is a knot, one can check from \cite[Definition~7]{Bud06} that the companionship $\G_K$ is a \textit{rooted tree}, namely, every edge is directed and every directed path terminates in a unique vertex, called \emph{root}. Note that the root of $\G_K$ is the component containing the boundary of $E_K$. See Figure~\ref{fig:grp-knot} for the example $K = T_{2,3}\#4_1\#\Wh^+(4_1)$.

\begin{figure}[h]
  \includesvg[scale=0.8]{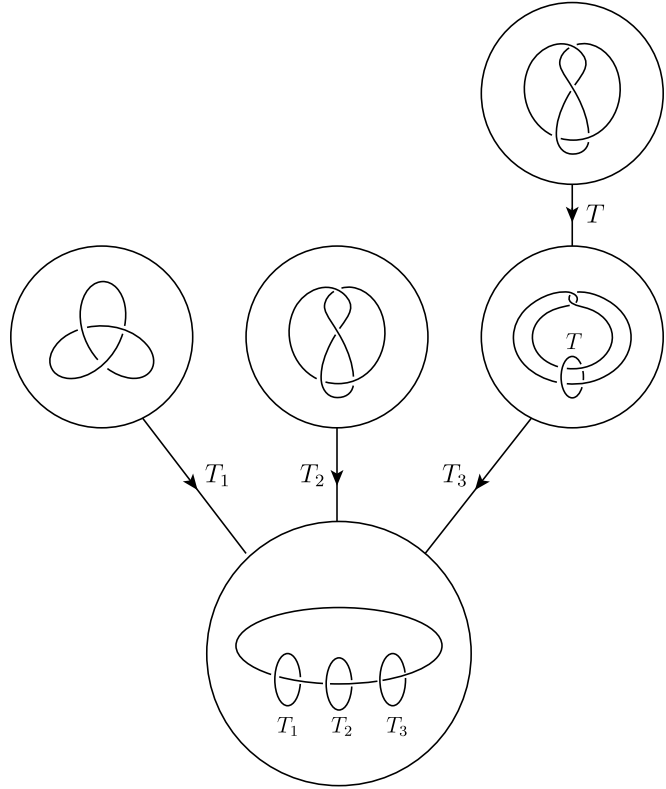} 
  \caption{The companionship graph of $K = \overline{T_{2,3}} \# 4_1 \# \Wh^+(4_1)$.}
\label{fig:grp-knot}
\end{figure}    

\end{exmp}

\begin{exmp}\label{ex:link}
    When a non-split link $L$ has more than one component, while the companionship graph may have undirected edges, $\G_L$ is still a partially directed tree. For example, the link $L$ in Figure~\ref{fig:ex-link} has $\G_L$ with an undirected edge, as described in Figure~\ref{fig:grp-link}.

    \begin{figure}[h]
  \includesvg[scale=0.8]{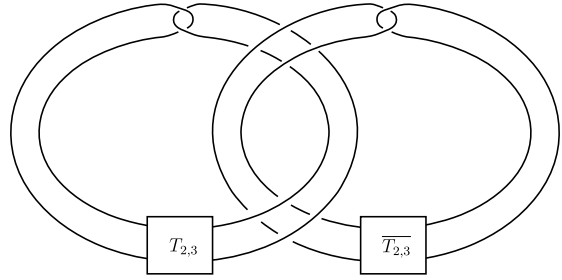}
  \caption{An example of link $L$ with companionship graph in Figure~\ref{fig:grp-link}.}
\label{fig:ex-link}
\end{figure}

\begin{figure}[h]
  \includesvg[scale=0.7]{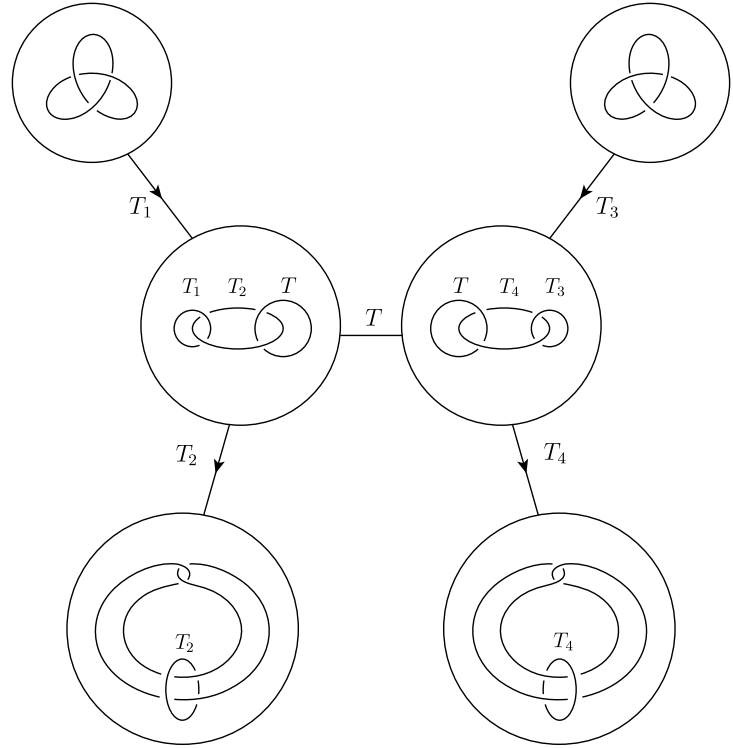}
  \caption{The companionship graph of the link $L$.}
\label{fig:grp-link}
\end{figure}
\end{exmp}

Note that the vertices of $\G_K$ in Example~\ref{ex:knot} and of $\G_L$ in Example~\ref{ex:link} are labeled by links whose exteriors are either Seifert fibered or hyperbolic. For example, the trefoil knot $T_{2, 3}$, the link in the root vertex in Figure~\ref{fig:grp-knot}, and the link in the middle in Figure~\ref{fig:grp-link} are Seifert fibered. The figure-eight knot $4_1$ and the Whitehead link at the bottom in Figure~\ref{fig:grp-link} are hyperbolic. Now we will give the exact characterization of links whose exteriors are Seifert fibered.

\begin{defn}\cite[Definition 3]{Bud06}\label{def:SF-link}
    Let $p$ and $q$ be nonzero integers and $X$ be a subset of $\{*_1, *_2\}$. A link is called a 
    \textit{Seifert fibered link} if it is isotopic to $S(p, q | X) = S_1 \cup S_2 \cup S_3$ where

\begin{itemize}
 \item $S_1 = \{(z_1, z_2)\in S^3 \ | \ z_1^p = z_2^q\}$,
  \item $S_2 = \begin{cases}
        \{(z_1,0)\in S^3 \mid z_1\in S^1\}, & \text{ if }*_1\in X, \\
        \varnothing, & \text{ otherwise},
      \end{cases}$

  \item $S_3 = \begin{cases}
        \{(0,z_2)\in S^3 \mid z_2\in S^1\}, &  \text{ if }*_2\in X, \\
        \varnothing, & \text{ otherwise}.
      \end{cases}$
\end{itemize}

\noindent See Figure~\ref{fig:Seifert-fib} for an example.

\begin{figure}[h]
  \includesvg[scale=0.7]{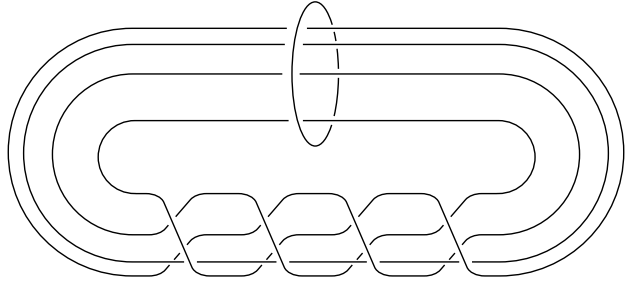}
  \caption{A Seifert fibered link $S(3, 4|{*_1, *_2})$.}
\label{fig:Seifert-fib}
\end{figure}
        
\end{defn}

Note that the complement of the link defined above is Seifert fibered. However, such links do not constitute all the links whose exteriors are Seifert fibered manifolds. Now, we introduce the other class of links with this property. 

\begin{defn}
    Let $V$ be a standard solid torus $S^1\times D^2$ in $S^3$. The core $C = S^1\times 0$ of $V$, together with $n$ parallel copies of the meridian is called the \textit{$(n+1)$-component key-chain link} $H^n$, depicted in Figure~\ref{fig:key-chain}.

\begin{figure}[h]
  \includesvg{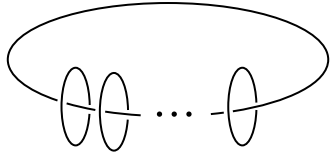}
  \caption{A key-chain link.}
\label{fig:key-chain}
\end{figure}
\end{defn}

One can see that the link in the middle of Figure~\ref{fig:grp-link} is the $3$-component key-chain link. Note that the $(n+1)$-component key-chain link $H^n$ is the same as the link $S(n,0;\{*_1\})=S(0,n;\{*_2\})$ if we allow the case where either $p$ or $q$ is $0$ in Definition~\ref{def:SF-link}. By the proposition below, the Seifert fibered links and the key-chain links are sufficient to cover all the cases when the link exteriors are Seifert fibered.

\begin{prop}\cite[Proposition~5]{Bud06}
\label{prop:seifert-fibered-link-exteriors}
Every link in $S^3$ whose exterior is Seifert fibered is isotopic to either a Seifert fibered link or a key-chain link.
\end{prop}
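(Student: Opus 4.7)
The plan is to analyze the Seifert fibration $\pi\colon E_L\to B$ of the exterior by examining, for each boundary torus $T_i=\partial\nu L_i$, how the regular fiber slope $\phi_i$ on $T_i$ compares with the meridian $\mu_i$ of the corresponding component $L_i$, and then to case-split on this comparison.

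In the first case, suppose $\phi_i\neq\mu_i$ for every component of $L$. Then on each $T_i$ the meridian $\mu_i$ is transverse to the fiber, so Dehn filling $E_L$ along each $\mu_i$ extends the Seifert fibration over the glued-in solid torus $\nu L_i$, making $L_i$ a (possibly exceptional) fiber of the extended fibration. The result is a global Seifert fibration on $S^3 = E_L \cup \bigsqcup_i \nu L_i$. Invoking the classical classification of Seifert fibrations on $S^3$, every such fibration arises from a Hopf-type $S^1$-action $(z_1,z_2)\mapsto(e^{iq\theta}z_1,e^{ip\theta}z_2)$ with $\gcd(p,q)=1$, whose generic fibers are $(p,q)$-torus knots lying on concentric Clifford tori and whose two distinguished fibers are the axes $\{z_1=0\}$ and $\{z_2=0\}$. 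Since $L$ is a union of fibers of this fibration, $L$ is isotopic to a Seifert fibered link $S(p,q\mid X)$ for some $X\subseteq\{*_1,*_2\}$, as in Definition~\ref{def:SF-link}.

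In the second case, suppose $\phi_i=\mu_i$ for some $i$. Let $V_i^c=S^3-\operatorname{int}(\nu L_i)$ be the complementary solid torus, so that on $\partial V_i^c$ the fiber $\phi_i$ is a longitude of $V_i^c$. The Seifert fibration on $E_L\subset V_i^c$ then extends over all of $V_i^c$ as a trivial product $D^2\times S^1$, with fibers $\{\mathrm{pt}\}\times S^1$ parallel to the core of $V_i^c$. The remaining components $L_j$ with $j\neq i$ are saturated by these fibers, so each $L_j$ is isotopic to a single fiber $\{\mathrm{pt}_j\}\times S^1$, namely a meridian of $L_i$ in $S^3$. This exhibits $L$ as the key-chain link $H^{|L|-1}$.

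The main obstacle is to justify rigorously the extension claim in the second case: one must exclude the presence of exceptional fibers in $V_i^c$ and show that the base orbifold of the extended Seifert fibration on $V_i^c$ is a disk without cone points. This follows from the observation that $V_i^c$ is a solid torus whose fibers, being longitudes on $\partial V_i^c$, generate $\pi_1(V_i^c)=\mathbb{Z}$, together with the rigidity that the quotient of a Seifert fibered solid torus by its circle action is a disk whose only possible cone point (coming from an exceptional central fiber) would obstruct the regular boundary fiber from being a generator of $\pi_1$. Consequently the extended fibration is the trivial $D^2\times S^1$, and the $L_j$'s correspond to interior punctures of the disk base, forcing them to be parallel meridians of $L_i$. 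A convenient reference for both the classification of Seifert fibrations on $S^3$ and this rigidity statement is Orlik's treatise on Seifert manifolds.
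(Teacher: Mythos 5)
Since the paper only cites this statement from Budney and gives no proof of its own, I can only judge your argument on its merits. Your overall strategy is the standard one and your first case is essentially fine: when no boundary fiber slope is a meridian, meridional filling extends the fibration to a Seifert fibration of $S^3$, and the classification of such fibrations identifies $L$ with a union of fibers of a weighted Hopf fibration. (One cosmetic point: if $L$ contains several generic fibers you land on $S(kp,kq\,|\,X)$ rather than $S(p,q\,|\,X)$ with coprime parameters; this is still covered by Definition~\ref{def:SF-link}, which does not require $\gcd(p,q)=1$.)

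The second case, however, has a genuine gap, and it sits exactly at the heart of the key-chain alternative. You introduce $V_i^c=S^3-\operatorname{int}(\nu L_i)$ as ``the complementary solid torus,'' i.e.\ you assume $L_i$ is unknotted; nothing in your argument establishes this, and your exclusion of exceptional fibers then uses $\pi_1(V_i^c)\cong\mathbb{Z}$, so the reasoning is circular. You also assert that the fibration of $E_L$ extends over all of $V_i^c$, i.e.\ over every $\nu L_j$ with $j\neq i$, without checking that the fiber slope on $T_j$ is not $\mu_j$. Both points can be repaired, but in the opposite logical order. First, a regular fiber $\phi$ is isotopic in $E_L$ to $\mu_i\subset T_i$, so $\operatorname{lk}(\phi,L_i)=1$ and $\operatorname{lk}(\phi,L_j)=0$ for $j\neq i$; writing the fiber slope on $T_j$ as $a\mu_j+b\lambda_j$ and comparing linking numbers with $L_j$ forces $a=0$, so the fiber slope on every other $T_j$ is the preferred longitude, never the meridian. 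Hence meridional filling extends the fibration over each $\nu L_j$, producing a Seifert fibration of $S^3-\nu L_i$ whose boundary fiber is $\mu_i$. Regluing $\nu L_i$ is then a Dehn filling along the fiber slope, and such a filling of a Seifert fibered space over a base with the relevant boundary circle capped off is a connected sum of lens spaces together with summands coming from the topology of the base; since the result is $S^3$, the base must be a disk with no exceptional fibers. Only at this point do you learn that $S^3-\nu L_i$ is a trivially fibered solid torus, i.e.\ that $L_i$ is unknotted, and that the remaining components, being fibers of this product fibration, are parallel meridians of $L_i$, so that $L$ is a key-chain link. In short: the unknottedness of $L_i$ has to come out as a conclusion of the fiber-slope analysis, not go in as an observation.
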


Now we define the splicing operation of links that can be viewed as a generalization of the satellite operation, see \cite[Corollary~2]{Bud06}.  

\begin{defn}\label{defn:splice}
    Let
    \begin{itemize}
        \item $L$ be a link in $S^3$ with an $n$-component unlinked sublink $U_1\cup \cdots \cup U_n$,
        \item $L_1,\dots,L_n$ be links in $S^3$ with a chosen component $K_i$ of each $L_i$,
        \item $\varphi_i$ be the orientation-reversing diffeomorphism between $\partial(S^3-K_i)$ and $\partial(S^3-U_i)$ which interchanges the meridian-longitude pairs.
    \end{itemize}
    The link obtained from the union of $L_i-K_i$ for $i=1,\dots,n$ and $L - (U_1\cup\dots\cup U_n)$ in $$\left (\underset{i=1}{\overset{n}{\sqcup}} (S^3-K_i) \right )\cup_{\sqcup\varphi_i}\left (S^3-(U_1\cup\dots\cup U_n)\right)\cong S^3,$$ is called the \textit{splice of $(L_1,\dots,L_n)$ and $L$ along $(K_1,\dots,K_n)$ and $(U_1,\dots,U_n)$}, and is denoted by $$(L_1,\dots,L_n)\underset{K_i\sim U_i}{\bowtie} L.$$ 
\end{defn}

We remark that even when the sublink $U_1\cup \cdots \cup U_n$ of $L$ is not unlinked, the operation still produces a link, though in a general $3$-manifold rather than in $S^3$. In our case, the resulting link is always in $S^3$. We also specify that, in the bow tie notation, the right-hand side denotes the link containing the unlinked sublink along which the splicing operation is made.

This operation is closely related to the companionship graph. If one splices two links $L_1$ and $L_2$ along the components $U_1$ and $U_2$ to obtain a link $L$, then the companionship graph $\G_L$ of $L$ can be obtained from the graph added an edge joining the vertices of $\G_{L_i}$ containing $\partial (\nu U_i)$, as described in \cite[Proposition~19]{Bud06}. Conversely, for a given link $L$, an edge-cut on the graph $\G_L$, namely, splitting the graph along an edge, is useful to obtain a splice form of $L$. Our argument mainly involves several edge-cuts.

As we can obtain a concordance between two satellite knots with the same pattern and concordant companion knots, if we have two concordant links and splice them to a link, then we obtain a concordance between the spliced links. For the detail, see Lemma~\ref{lemma:splice_concordance}.

\begin{exmp}\label{ex:splice}
Consider the link $L$ in Example~\ref{ex:link}. The link $L$ is obtained by splicing two links $L_1$ and $L_2$ in Figure~\ref{fig:link-splicing} along the unknotted components $C_1$ and $C_2$. Note that $L_1$ and $L_2$ are also obtained from a splicing operation. Compare this with Figure~\ref{fig:ex-link}.
\begin{figure}[h]
  \includesvg[scale=0.8]{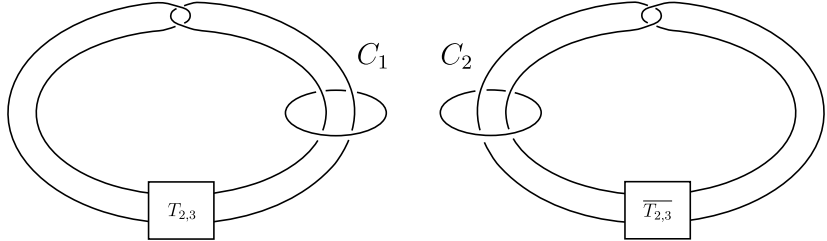}
  \caption{A splice form of the link $L$ from two links $L_1$ and $L_2$ along $C_1$ and $C_2$ is $L = L_1 \underset{C_1\sim C_2}{\bowtie}L_2$.}
\label{fig:link-splicing}
\end{figure}
If we iteratively decompose the link exteriors, then we finally obtain the companionship graph described in Figure~\ref{fig:grp-link}.
\end{exmp}

A nice way to analyze $\G_L$ is to split it along an edge $T$ and consider the connected subgraphs $\G_1$ and $\G_2$. In particular, it is always possible when $L$ is a non-split link so that $\G_L$ is a connected tree. We write this \emph{edge-cut} operation as:
$$\G_L - T = \G_1\sqcup \G_2.$$ 

We denote the corresponding $3$-manifold to such a subgraph $\G_i$ by $M(\G_i)$. Since the boundary of each piece $M(\G_i)$ is tori, by Lemma~\ref{lemma:untwisted re-embedding}, we denote such links $L(\G_1)$ and $L(\G_2)$. Let $L_i = L(\G_i)$. Regarding the index set $A$ of $L$ as the set of the boundary components of $E_L$, the link $L_i$ is canonically indexed by $A\cap M(\G_i)$ and $T$. We denote these \textit{canonical} index sets of $L_1$ and $L_2$ by $A_1$ and $A_2$, respectively. Then $A_1$ and $A_2$ satisfy:
\begin{itemize}
        \item $A_1 \cap A_2 = \{T\}$,
        \item $(A_1 \cup A_2) - \{T\} = A$.
\end{itemize}

According to the direction of $T$, at least one of the components $(L_1)_T$ and $(L_2)_T$ is unknotted, so that we can always obtain a splice form of $L$ in terms of $L_1$ and $L_2$. Moreover, the companionship graphs $\G_{L_1}$ and $\G_{L_2}$ of $L_1$ and $L_2$ can be also obtained from $\G_1$ and $\G_2$.

\begin{prop}\cite[Proposition~17]{Bud06}
\label{prop:edge-cut}
    Let $L$ be a non-split link and $T$ be an edge in $\G_L$. Write $$\G_L -T = \G_1\sqcup \G_2.$$ Then, for $i=1, 2$,
    \begin{itemize}
        \item if $T$ is undirected, then the companionship graph $\G_{L_i}$ of $L_i$ is the same as $\G_i$.
        \item if $T$ is directed from $\G_1$ to $\G_2$, then $\G_{L_1}$ is the same as $\G_1$,  and $\G_{L_2}$ is obtained from $\G_2$ by undirecting the edges of the downward consequences $DC(T)\subset \G_2$ of $T$.
    \end{itemize}
    Moreover, in any case, the splice form of $L$ is as follows:
    $$L=L_1\underset{(L_1)_T\sim (L_2)_T}{\bowtie
    } L_2.$$
\end{prop}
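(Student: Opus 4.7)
The plan is to separate the statement into two independent verifications: first, recovering $L$ as the prescribed splice; second, identifying the partially directed labeled graphs $\G_{L_i}$, where the direction analysis is the main obstacle.

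For the splice form, cutting $E_L$ along $T$ produces the two submanifolds $M(\G_1)$ and $M(\G_2)$, each of which, by Lemma \ref{lemma:untwisted re-embedding}, admits an essentially unique untwisted re-embedding as a link exterior $E_{L_i} \subset S^3$. Because $T$ is separating in $S^3$, at least one of the two components of $S^3 - T$ is a solid torus, so after re-embedding at least one of $(L_1)_T,(L_2)_T$ is the core of a standard solid torus, i.e., an unknot, and the splicing of Definition \ref{defn:splice} along these components is well-defined. Since untwisted re-embeddings preserve peripheral curves, the gluing homeomorphism $\varphi$ used in the splice is, on each peripheral pair, precisely the meridian--longitude identification that originally glued the two sides of $T$ inside $S^3$. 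Re-gluing therefore returns $E_L$ and yields $L = L_1 \underset{(L_1)_T \sim (L_2)_T}{\bowtie} L_2$.

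For the graph identification, by uniqueness and minimality of the JSJ decomposition applied to $E_{L_i}$, the JSJ pieces of $E_{L_i}$ are in canonical bijection with the vertices of $\G_i$ and the interior JSJ tori with the edges of $\G_i$. The vertex labels $L(v)$ depend only on the re-embedding of $v$ itself, so they agree in $\G_i$ and $\G_{L_i}$, and the orientation convention on the newly introduced link components from Definition \ref{defn:companionship-graph} is built directly into the splice gluing. The only feature that may genuinely differ is the direction of an edge $T' \in \G_i$, which records whether a component of $S^3 - T'$ is a solid torus. If $T$ is undirected, then both components of $S^3 - T$ are already solid tori, so substituting a standard solid torus for either of them (as happens when re-embedding $M(\G_{3-i})$) does not affect which side of any interior $T' \in \G_i$ is a solid torus; hence all directions are preserved and $\G_{L_i} = \G_i$.

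If instead $T$ is directed from $\G_1$ to $\G_2$, then $M(\G_2)$ sits inside a solid torus $Y_2 \subset S^3$ while $M(\G_1)$ sits inside the non-solid-torus complement $Y_1$. Re-embedding $M(\G_1)$ replaces $Y_2$ by another standard solid torus and thus preserves the solid-torus sides of every $T' \in \G_1$, giving $\G_{L_1} = \G_1$. Re-embedding $M(\G_2)$ instead replaces the non-solid-torus $Y_1$ by a standard solid torus, and a tree-combinatorial analysis along the unique path from $T$ to $T'$ in $\G_L$ shows that this operation creates a new solid-torus side of $T'$ precisely for $T' \in DC(T)$; for such $T'$ both sides of $T'$ become solid tori and the edge becomes undirected in $\G_{L_2}$, while for $T' \in \G_2 \setminus DC(T)$ the non-solid side remains non-solid and the direction is unchanged. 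I expect the verification of this last dichotomy to be the technically delicate step, since it requires tracing how the solid-torus property propagates along the rooted-subtree structure below $T$ as the re-embedding substitutes $Y_1$ by a standard solid torus.
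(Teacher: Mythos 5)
This statement is quoted from Budney (\cite[Proposition~17]{Bud06}); the paper gives no proof of its own, so your proposal has to stand on its own merits. The parts you do argue are essentially sound: recovering the splice form by cutting along $T$ and re-gluing (the peripheral-curve condition of Lemma~\ref{lemma:untwisted re-embedding} identifying the gluing with the meridian--longitude exchange), and the preservation of directions in the undirected case and for $\G_{L_1}$. Your key observation there --- that replacing a complementary \emph{solid-torus} region by a standard solid torus glued along the same peripheral slope does not change the homeomorphism type of either side of any interior torus $T'$, hence preserves which sides are solid tori --- is correct and is the right mechanism.

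The genuine gap is exactly where the proposition has content: the claim that, when the non-solid-torus side $Y_1$ beyond $T$ is replaced by a standard solid torus, a new solid-torus side of $T'\in\G_2$ is created \emph{precisely} for $T'\in DC(T)$, that for such $T'$ the old solid-torus side survives (so the edge becomes undirected rather than reversed), and that no edge outside $DC(T)$ is affected. You assert this dichotomy and explicitly defer its verification as ``the technically delicate step,'' but that dichotomy \emph{is} the second bullet of the statement; as written, your sentence is a restatement of the conclusion, not an argument. Moreover, you never invoke (and the paper deliberately does not supply) the actual definition of $DC(T)$ --- the paper only records the consequence in Proposition~\ref{prop:DC} --- so there is nothing in your proposal that could tie the tree-combinatorial condition to the solid-torus analysis. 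To close the gap you would need Budney's definition of downward consequences and an induction along directed paths out of the endpoint of $T$, tracking how the filling of $T$ by a solid torus propagates ``solid torus on the far side'' through successive JSJ tori (key-chain/Seifert vertices being the mechanism by which the propagation continues or stops), together with a check that previously-solid sides of such $T'$ remain solid so directions never reverse. A secondary, smaller looseness: the claim that the original gluing across $T$ interchanges meridian--longitude pairs of $(L_1)_T$ and $(L_2)_T$ deserves the short homological argument (peripheral slopes on the two sides of $T$ intersecting once), rather than being folded into ``preserves peripheral curves.''
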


For a cut edge $T$ directed from $v_1\in \G_1$ to $v_2\in \G_2$, the \textit{downward consequence $DC(T)$ of $T$} is a certain subtree of $\G_2$ rooted on $v_2$, defined by Budney. Rather than giving a full definition of $DC(T)$, we only state a direct observation from its definition in the next proposition. See \cite[Definition 15]{Bud06} for the precise definition of a downward consequence.

\begin{prop}\label{prop:DC}
 If a directed edge $e$ is in $DC(T)$, then there exists a directed path from the end of $T$ to the start of $e$ in $DC(T)$.
\end{prop}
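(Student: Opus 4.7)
The plan is to argue by induction on the tree-distance $n = d(v_2, \mathrm{start}(e))$ in the companionship graph $\G_L$, where $v_2$ denotes the end vertex of $T$. The base case $n = 0$ is immediate: $\mathrm{start}(e) = v_2$, and the empty path serves as a directed path from $v_2$ to itself.

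For the inductive step with $n \geq 1$, since $\G_L$ is a tree, there is a unique geodesic $v_2 = u_0, u_1, \dots, u_n = \mathrm{start}(e)$, and I let $e'$ denote the edge joining $u_{n-1}$ and $u_n$. The heart of the argument is to extract from \cite[Definition~15]{Bud06} that $e'$ must itself lie in $DC(T)$ and be directed from $u_{n-1}$ to $u_n$. Informally, $DC(T)$ is constructed as a subtree of $\G_L$ rooted at $v_2$: one starts with the directed edges incident to $v_2$ that satisfy Budney's solid-torus condition, and then iteratively adjoins further directed edges whose solid-torus behavior is propagated from edges already included. The uniqueness of the tree-geodesic from $v_2$ to $\mathrm{start}(e)$ forces $e'$ to be one of the intermediate edges appearing in this propagation chain, which gives both the directedness and the membership claim for $e'$. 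Once $e' \in DC(T)$ is established, the inductive hypothesis applied to $e'$ provides a directed path from $v_2$ to $u_{n-1} = \mathrm{start}(e')$ inside $DC(T)$, and concatenating with $e'$ produces the desired directed path from $v_2$ to $\mathrm{start}(e)$.

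I expect the main obstacle to be this closure verification at the inductive step: one has to unpack Budney's stage-by-stage definition carefully enough to confirm that an edge of $DC(T)$ cannot be isolated from $v_2$ by a tree-edge that is either absent from $DC(T)$ or not directed toward $\mathrm{start}(e)$. Once this closure property is extracted, the induction itself is routine, and in fact the proposition is essentially a bookkeeping reformulation of the inductive construction of $DC(T)$ in terms of directed paths within the companionship graph.
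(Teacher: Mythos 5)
There is a genuine gap, and it sits exactly where you predicted it would: the closure property at the inductive step. Your argument reduces the proposition to the claim that the edge $e'$ immediately preceding $\mathrm{start}(e)$ on the tree-geodesic from the end of $T$ lies in $DC(T)$ and is directed toward $\mathrm{start}(e)$, but you never establish this. Instead you support it with an informal, reconstructed picture of how $DC(T)$ is built (``start with directed edges incident to $v_2$ satisfying Budney's solid-torus condition, then iteratively adjoin edges whose solid-torus behavior is propagated''), which is your own paraphrase rather than Budney's Definition~15, and you explicitly flag the verification against the actual definition as an expected obstacle rather than carrying it out. Since the entire content of the proposition is precisely this closure/reachability property, deferring it means the proof is not complete: the sentence ``the uniqueness of the tree-geodesic forces $e'$ to be one of the intermediate edges appearing in the propagation chain'' is only valid once you already know that membership of $e$ in $DC(T)$ is witnessed by a directed path in the tree emanating from the end of $T$ --- which is the statement being proved.

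For comparison, the paper does not prove this proposition at all: it is stated as ``a direct observation'' from Budney's Definition~15, because $DC(T)$ is defined there in terms of directed paths issuing from the terminal vertex of $T$ (subject to conditions on the vertices traversed), so every edge of $DC(T)$ comes equipped with such a path and the conclusion is read off immediately, with no induction needed. A correct write-up along your lines would therefore have to begin by quoting Budney's definition verbatim and checking your closure claim against it; once that is done, the induction on tree-distance becomes superfluous, since the definition already hands you the directed path from the end of $T$ to the start of $e$ inside $DC(T)$.
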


\begin{exmp}
    Consider the link $L$ in Figure~\ref{fig:ex-link} and its companionship graph $\G_L$ in Figure~\ref{fig:grp-link}. If we cut the undirected edge $T$ and write $$\G_L - T =\G_1 \sqcup \G_2,$$ where the left one is $\G_1$, then the corresponding links $L(\G_1)$ and $L(\G_2)$ are the same as $L_1$ and $L_2$ in Example~\ref{ex:splice}. See Figure~\ref{fig:link-splicing}.
\end{exmp}

\begin{remark}\label{rmk:edge-cut}
    When the cut edge $T$ is directed from $\G_1$ to $\G_2$, we note that $DC(T)$ is a subtree of $\G_2$, and hence $\G_1$ always equals $\G_{L_1}$. On the other hand, some directed edges of $\G_2$ may become undirected in $\G_{L_2}$. However, their directions are never reversed, and undirected edges in $\G_2$ never become directed in $\G_{L_2}$. These changes do not affect our arguments involving edge cuts in later proofs. For instance, the complexity $c(L_2)$, which is defined from $\G_{L_2}$, can be also obtained from $\G_2$. We will clarify such a subtlety whenever $\G_{L_2}$ is involved in our arguments.
\end{remark}

We also often cut several edges at the same time to get a splice form of the given link $L$, and analyze each subgraph. In particular, we consider the split subgraphs after removing a certain subgraph $\G'$. In this case, letting $\mathcal{E}(\G')=\{T_1,\ldots, T_n\}$ be the set of edges in $\G_L$ with exactly one endpoint in $\G'$, we consider the remainders after deleting all edges in $\mathcal{E}(\G')$. Let $\G_i$ be the other component $T_i$ meets. We write our edge-cuts as: $$\G_L - \G'= \G_1\sqcup\cdots \sqcup \G_n.$$ Let $L_i$ be the corresponding link $L(\G_i)$, and let $L'$ be $L(\G')$. Then we can separately analyze each smaller piece $\G_i$ and the link $L_i$ to study the structure of $\G_L$. In particular, we can obtain a splice form of $L$ when every edge of $\mathcal{E}(\G')$ is directed to $\G'$ as
$$L = (L_1, \ldots, L_n)\underset{(L_i)_{T_i}\sim (L')_{T_i}}{\bowtie} L'.$$ In Lemma~\ref{lemma:unlink}, we will prove that the sublink $(L')_{\mathcal{E}(\G')}$ is unlinked so that the above splice form is valid. Before that, we first provide a proposition which concerns a more certain case without assumption on the directions of the edges $\mathcal{E}(\G')$.

\begin{prop}\label{prop:many-edge-cut}
    Let $\G'$ be a connected subgraph of the companionship graph $\G_L$ of a non-split link $L$. Write $$\G_L - \G' = \G_1 \sqcup \cdots \sqcup \G_n.$$ If $\G'$ contains all boundary components of $E_L$, then each of $\G_i$ is the companionship graph of a nontrivial knot.
\end{prop}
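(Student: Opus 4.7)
The plan is to show directly that each $L_i = L(M(\G_i))$ is a non-trivial knot, using that $\G_L$ is a tree and that JSJ tori of $E_L$ are incompressible.

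First I would note that since $L$ is non-split, $\G_L$ is a tree, and since $\G'$ is a connected subgraph containing all boundary labels coming from $\partial E_L$, each component $\G_i$ of $\G_L - \G'$ is attached to $\G'$ by exactly one edge $T_i \in \mathcal{E}(\G')$ (if there were two, we would obtain a cycle in $\G_L$). The boundary of $M(\G_i)$ consists of edges of $\G_L$ with exactly one endpoint in $\G_i$, together with boundary components of $E_L$ labelling vertices of $\G_i$; the latter is empty by hypothesis, and the former consists only of $T_i$. Hence $\partial M(\G_i) = T_i$ is a single torus, so $L_i$ is a knot.

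Next I would show that $M(\G_i)$ is not a solid torus, which is equivalent to $L_i$ being non-trivial, since the exterior of the unknot is the unique knot exterior diffeomorphic to a solid torus. The torus $T_i$ is an edge of $\G_L$, hence a JSJ torus of $E_L$, and so is incompressible in $E_L$. Since $M(\G_i) \subset E_L$, any compressing disk for $T_i = \partial M(\G_i)$ in $M(\G_i)$ would compress $T_i$ in $E_L$ as well, contradicting incompressibility. As the boundary of a solid torus is compressible, $M(\G_i)$ cannot be a solid torus, and $L_i$ is a non-trivial knot.

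Finally, to conclude that $\G_i$ actually equals $\G_{L_i}$ as a labeled partially directed graph (up to possibly undirecting some internal edges, as in Remark~\ref{rmk:edge-cut}), I would iterate Proposition~\ref{prop:edge-cut} along the cuts $T_1, \ldots, T_n$. I expect the main subtlety to lie not in the non-triviality assertion itself, which reduces to the incompressibility argument above, but in the careful bookkeeping of how partial directions on edges inside each $\G_i$ may change under iterated edge-cuts relative to the genuine companionship graph $\G_{L_i}$.
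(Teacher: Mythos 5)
Your proof is correct and follows essentially the same route as the paper: the hypothesis that all components of $\partial E_L$ lie in $\G'$ forces $\partial M(\G_i)$ to be the single torus $T_i$, so $L(\G_i)$ is a knot, and $M(\G_i)$ cannot be a solid torus, so the knot is nontrivial. Your incompressibility argument for the last step (a compressing disk for $T_i$ in a solid torus $M(\G_i)$ would compress the JSJ torus $T_i$ in $E_L$) is simply a fleshed-out version of the paper's terser assertion that a union of JSJ pieces cannot be a solid torus, and your closing remark about directions of edges under edge-cuts is consistent with how the paper handles this via Proposition~\ref{prop:edge-cut} and Remark~\ref{rmk:edge-cut}.
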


\begin{proof}
    Since every boundary component of $E_L$ is contained in $\G'$, there are no boundary components of the $3$-manifold  $M(\G_i)$ other than the torus in $\mathcal{E}(\G')$, which joins $\G_i$ and $\G'$. Thus, the corresponding link $L(\G_i)$ is a knot. Moreover, since $M(\G_i)$, the union of some JSJ pieces, cannot be a solid torus, $L(\G_i)$ is not the unknot.
\end{proof}

We close this subsection by proving the following lemma that will be used to obtain the desired splice form later.

\begin{lem}\label{lemma:unlink}
For a non-split link $L\subset S^3$ and its companionship graph $\G_L$, let $\G'$ be a subtree of $\G_L$ and $L'$ be the link corresponding to $\G'$. Let $\mathcal{T}$ be the subset of the edges in $\G_L$ with exactly one endpoint in $\G'$. If each $T_i\in\mathcal{T}$ is directed from outside $\G'$ to $\G'$, then the sublink $(L')_\mathcal{T}$ of $L'$ is an unlink.
\end{lem}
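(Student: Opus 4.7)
\emph{The plan is to induct on $n := |\mathcal{T}|$.} The case $n = 0$ is vacuous, and $n = 1$ is immediate from Proposition~\ref{prop:edge-cut}: cutting the single edge $T_1$ produces a splice decomposition $L = L(\G_1) \bowtie L(\G')$, and Definition~\ref{defn:splice} forces the spliced component $(L(\G'))_{T_1}$ to be an unknot.

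For the inductive step with $n \geq 2$, pick $T_1 \in \mathcal{T}$ meeting a component $\G_1$ of $\G_L - \G'$, and apply Proposition~\ref{prop:edge-cut} to obtain
\[
  L \;=\; L(\G_1) \underset{(L(\G_1))_{T_1} \sim (L(\G_A))_{T_1}}{\bowtie} L(\G_A),
\]
where $\G_A = \G_L - \G_1 - T_1 \supset \G'$ and $(L(\G_A))_{T_1}$ is unknotted. To apply the inductive hypothesis to $L(\G_A)$ with the same subgraph $\G'$, one must verify that each $T_k$ with $k \geq 2$ still points from outside $\G'$ into $\G'$ inside $\G_{L(\G_A)}$: by Proposition~\ref{prop:edge-cut} only edges in the downward consequence $DC(T_1)$ can lose their direction, and by Proposition~\ref{prop:DC} such an edge would be the endpoint of a directed path originating at the terminus of $T_1$ (a vertex of $\G'$). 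Since the unique edge of $\G_A$ joining $\G'$ to the component $\G_k$ is $T_k$, and $T_k$ is directed \emph{into} $\G'$, such a path would have to traverse $T_k$ against its direction, a contradiction. Noting that $L(\G')$ is intrinsic to the abstract manifold $M(\G')$, the inductive conclusion applied to $L(\G_A)$ yields that $(L(\G'))_{\{T_2, \ldots, T_n\}}$ is an unlink; by the symmetric argument (choosing any other $T_k$ to cut first), every individual $(L(\G'))_{T_k}$ is an unknot.

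The main obstacle will be promoting this $(n-1)$-component unlink together with an additional unknotted component $(L(\G'))_{T_1}$ to a full $n$-component unlink. The plan is to iterate Proposition~\ref{prop:edge-cut} to write $L$ as the iterated splice $L(\G_1) \bowtie L(\G_2) \bowtie \cdots \bowtie L(\G_n) \bowtie L(\G')$ and then recognize it as the simultaneous multi-component splice
\[
  L \;=\; (L(\G_1), \ldots, L(\G_n)) \underset{(L(\G_k))_{T_k} \sim (L(\G'))_{T_k}}{\bowtie} L(\G'),
\]
whose well-posedness per Definition~\ref{defn:splice} is tantamount to $(L(\G'))_\mathcal{T}$ being an unlink. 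The hard part will be rigorously justifying this rewriting, i.e.~the commutativity of the iterated single-edge splices; I expect this to be handled by passing to the original ambient embedding of $L$ in $S^3$ and exploiting the disjointness of the complementary regions $\tilde C_i = E_{K_i}$ on the outside of $\G'$, so that disjoint bounding disks for the $(L(\G'))_{T_i}$'s can be constructed from this geometric separation and transported through the untwisted re-embedding.
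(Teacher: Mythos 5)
Your reduction steps are fine and in fact mirror the paper's bookkeeping: cutting the boundary edges of $\G'$ one at a time, checking via Proposition~\ref{prop:DC} that an edge $T_k$ cannot lie in $DC(T_1)$ (so the remaining edges of $\mathcal{T}$ stay directed into $\G'$), and noting that $L(\G')$ is determined intrinsically by $M(\G')$ and its untwisted re-embedding. The base case $n=1$ is also correct. The problem is the final step, which is where the entire content of the lemma sits, and there your proposal has a genuine gap. What your induction actually delivers is that every proper subfamily of $\{(L')_{T_1},\dots,(L')_{T_n}\}$ is an unlink; upgrading this to the statement that the whole family is an unlink is not a formality --- Brunnian-type configurations are precisely the situation in which all proper sublinks are unlinks while the full link is not, so no general principle closes this gap. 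Your proposed closure is moreover circular as stated: recognizing $L$ as the simultaneous multi-component splice of Definition~\ref{defn:splice} over $L(\G')$ requires $(L')_{\mathcal{T}}$ to be unlinked, which is the conclusion. The appeal to ``disjoint bounding disks constructed from the geometric separation and transported through the untwisted re-embedding'' does not work either: in the original $S^3$ each torus $T_i$ bounds a solid torus $Y_i$ on the $\G'$ side, and the natural disk available there (a meridian disk of $Y_i$, whose boundary becomes a longitude of $(L')_{T_i}$ after re-embedding) passes through all the other complementary pieces $C_j$, exactly the regions that get replaced by the re-gluing; patching such disks disjointly in the new $S^3$ is equivalent to the statement being proved, and disjointness of the $C_j$ in the old $S^3$ gives no control over this.

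The paper closes this step with Budney's global Brunnian property, Proposition~\ref{prop:Brunnian}, which your proposal never invokes and which is the one tool that supplies the missing globalization. Concretely, the paper runs the same one-edge-at-a-time cut, but at each stage applies the dichotomy of Proposition~\ref{prop:Brunnian} to the already-established unlinked collection $\{T_1,\dots,T_k\}\in\overline{\mathcal{U}}_{L_1^k}$, and rules out the wrong alternative because the newly cut edge $T_{k+1}$ exits the outer piece, so the corresponding component of $L_2^{k+1}$ is not unknotted; this is what lets the unlink grow by one component at each step rather than having to assemble it at the end. If you replace your final paragraph with this use of Proposition~\ref{prop:Brunnian} (starting the induction from the empty sublink $\varnothing\in\overline{\mathcal{U}}_L$), your argument becomes essentially the paper's proof.
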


For a link $L$ indexed by $A$, let $\overline{\mathcal{U}}_L$ be the collection of every subset $B\subset A$ such that the sublink $L_B$ of $L$ indexed by $B$ is an unlink. By definition, to prove Lemma~\ref{lemma:unlink}, it is enough to show that $\mathcal{T}\in \overline{\mathcal{U}}_{L'}$. The proof relies on the iteration of the following proposition, called the \textit{global Brunnian property}, by Budney:

\begin{prop}\cite[Proposition~17]{Bud06}
\label{prop:Brunnian}
    Let $L$ be a non-split link indexed by $A$ and $T$ be an edge in $\G_L$ and write $$\G_L -T = \G_1\sqcup \G_2.$$ Let $A_1$ and $A_2$ be the canonical index sets from $A$ and $T$. Then for any $B\in \overline{\mathcal{U}}_L$, one of the following holds:
    \begin{enumerate}
        \item $B\cap A_1 \cup \{T\}\in \overline{\mathcal{U}}_{L_1}$ and $B\cap A_2 \in \overline{\mathcal{U}}_{L_2}$,
        \item $B\cap A_2 \cup \{T\}\in \overline{\mathcal{U}}_{L_2}$ and $B\cap A_1 \in \overline{\mathcal{U}}_{L_1}$.
    \end{enumerate}
\end{prop}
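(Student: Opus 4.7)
The plan is to analyze the intersection of a system of disjoint spanning disks for the unlink $L_B$ with the JSJ torus $T\subset S^3$, reduce it by innermost-disk moves, and then reassemble the pieces after the untwisted re-embedding on each side. Choose pairwise disjoint embedded spanning disks $\{D_b\}_{b \in B} \subset S^3$ for $L_B$; these may cross $L \setminus L_B$ but are disjoint from each other and from $L_B$. Let $V_1, V_2$ be the closed complementary regions of $T$ in $S^3$, with $M(\G_i)\subset V_i$, and place the disks transverse to $T$. By a standard innermost-disk argument, I reduce to the case in which every curve of $\bigcup_b D_b\cap T$ is essential on $T$: an innermost inessential curve $C$ bounds a disk $D''\subset T$ free of the other intersection curves and a subdisk $D'\subset D_b$, and disjointness of $\{D_b\}$ together with innermost-ness on $T$ forces no other disk to enter the ball bounded by $D'\cup D''$, so an isotopy of $D_b$ across this ball strictly reduces $|\bigcup_b D_b\cap T|$ while preserving pairwise disjointness (we allow the isotopy to cross $L\setminus L_B$). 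After reduction, the remaining essential curves are pairwise disjoint on $T$, hence mutually parallel, and represent a single primitive class $\alpha\in H_1(T;\Z)\setminus\{0\}$ up to sign.

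Next, I identify which side of $T$ is a solid torus with meridian $\alpha$. Pick an innermost intersection curve on some $D_b$; it bounds a subdisk $D'\subset D_b$ lying entirely in one side $V_j$. Thus $D'$ is an embedded compressing disk for $T$ inside the compact irreducible $3$-manifold $V_j$ (irreducibility inherited from $S^3$); compressing $T$ along $D'$ produces a $2$-sphere that must bound a ball in $V_j$, so $V_j$ is a solid torus with meridian $\alpha$. The side $V_j$ is uniquely determined by $\alpha$ since the meridians of the two sides of a torus in $S^3$ form a basis of $H_1(T;\Z)$. Set $j'\in\{1,2\}\setminus\{j\}$. Under the untwisted re-embedding of $M(\G_{j'})$, the boundary $T$ becomes a torus bounding a solid torus $V_T^{j'}\subset S^3$ whose core is the new component $(L_{j'})_T$, and since untwisted re-embedding preserves peripheral curves, the class $\alpha$ is sent to the meridian of $V_T^{j'}$.

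Finally, I assemble the unlink disks. On side $j'$: the pieces of $\{D_b\}$ lying in $V_{j'}$ have boundary on $L_{B\cap A_{j'}}$ and on meridional curves of $V_T^{j'}$; capping these boundaries by parallel meridional disks of $V_T^{j'}$, and adding one more such disk bounding $(L_{j'})_T$, produces a pairwise disjoint system of spanning disks realizing $(L_{j'})_{(B\cap A_{j'})\cup\{T\}}$ as an unlink. On side $j$: because $V_j$ is itself a solid torus with meridian $\alpha$ inside $S^3$, each excursion of $D_b$ into $V_{j'}$ can be swapped out for a parallel meridional disk of $V_j$, producing disks entirely in $V_j$ bounded by $\{L_b:b\in B\cap A_j\}$, which transport under re-embedding of $M(\G_j)$ to an unlink structure for $(L_j)_{B\cap A_j}$. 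The dichotomy $(j,j')\in\{(1,2),(2,1)\}$ is exactly cases (2) and (1) of the statement, and the degenerate case with no intersection curves (all $L_b$ in one side) is immediate by attaching a single meridional disk for the extra component. The main obstacle is the side-identification step: the compression argument on $V_j$ must be carried out while allowing the disks to cross $L\setminus L_B$, relying only on irreducibility of $V_j\subset S^3$, and one must carefully use the definition of peripheral curves to verify that $\alpha$ is sent to the meridian of the filling solid torus $V_T^{j'}$ under the untwisted re-embedding.
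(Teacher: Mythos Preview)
The paper does not prove this proposition itself; it is quoted from Budney \cite[Proposition~17]{Bud06}. Your innermost-disk reduction and compression set-up is the natural route, but the final assembly contains a genuine error.

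You assign the extra index $\{T\}$ to side $j'$, and this is backwards. Filling back the neighbourhoods of the remaining components under the untwisted re-embedding gives $E_{(L_{j'})_T}\cong V_{j'}$, so $(L_{j'})_T$ is unknotted exactly when $V_{j'}$ is a solid torus. Now suppose the edge $T$ is \emph{directed}, say from $\G_1$ to $\G_2$. By the definition of direction only $V_2$ is a solid torus, while $V_1$ is a nontrivial knot exterior; since $T$ is incompressible in $V_1$, the compressing disk you produce is forced into $V_2$, so $j=2$ and $j'=1$. But then $(L_{j'})_T=(L_1)_T$ is a nontrivial knot and bounds no embedded disk whatsoever, so the step ``adding one more such disk bounding $(L_{j'})_T$'' is impossible. (Even read literally it fails: a meridional disk of $V_T^{j'}=\nu(L_{j'})_T$ has boundary a \emph{meridian} of $(L_{j'})_T$, not $(L_{j'})_T$ itself.) The correct conclusion places $\{T\}$ on side $j$, where $(L_j)_T$ has exterior $V_j$ and is therefore unknotted; in the directed example above this is exactly case~(2), the only case that can hold. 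Consequently your side-$j$ construction must be upgraded to produce an unlink structure for $(L_j)_{(B\cap A_j)\cup\{T\}}$, and that is where the actual work lies. Note also that ``swapping each excursion for a parallel meridional disk of $V_j$'' is not immediate: the surface pieces $D_b\cap V_j$ and the capping meridional disks both live inside the solid torus $V_j$ and may well intersect, so a further innermost/isotopy argument is required there as well.
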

 
\begin{proof}[Proof of Lemma~\ref{lemma:unlink}]

Let $T_1,\ldots, T_n$ be the elements of $\mathcal{T}$. Let $\G^0_1 = \G_L$ and $\G^0_2 = \varnothing$. We define two subgraphs $\G_1^i$ and $\G_2^i$ inductively by $$\G_1^i-T_{i+1}=\G_1^{i+1} \sqcup \G_2^{i+1},$$ where $\G_1^{i+1}$ is the component containing $\G'$, so that $\G_1^n=\G'$. See Figure~\ref{fig:unlinked-lemma} for a schematic picture of the situation. For simplicity let $L_1^i$ and $L_2^i$ denote the corresponding links $L(\G_1^i)$ and $L(\G_2^i)$ to $\G_1^i$ and $\G_2^i$, respectively. Let $A_1^i$ and  $A_2^i$ be the canonical index sets of $L_1^i$ and $L_2^i$, respectively. Observe from Proposition~\ref{prop:edge-cut} that the companionship graphs $\G_{L_1^i}$ and $\G_{L_2^i}$ are the same as $\G_1^i$ and $\G_2^i$ by ignoring all directions on the edges.
Then for each $i=1,\ldots, n$, $A_1^i$ and $A_2^i$ satisfy
\begin{itemize}
            \item $A_1^i \cap A_2^i = \{T_i\}$,
        \item $(A_1^i \cup A_2^i) - \{T_i\} = A_1^{i-1}$.
\end{itemize}

\begin{figure}[h]
  \includesvg{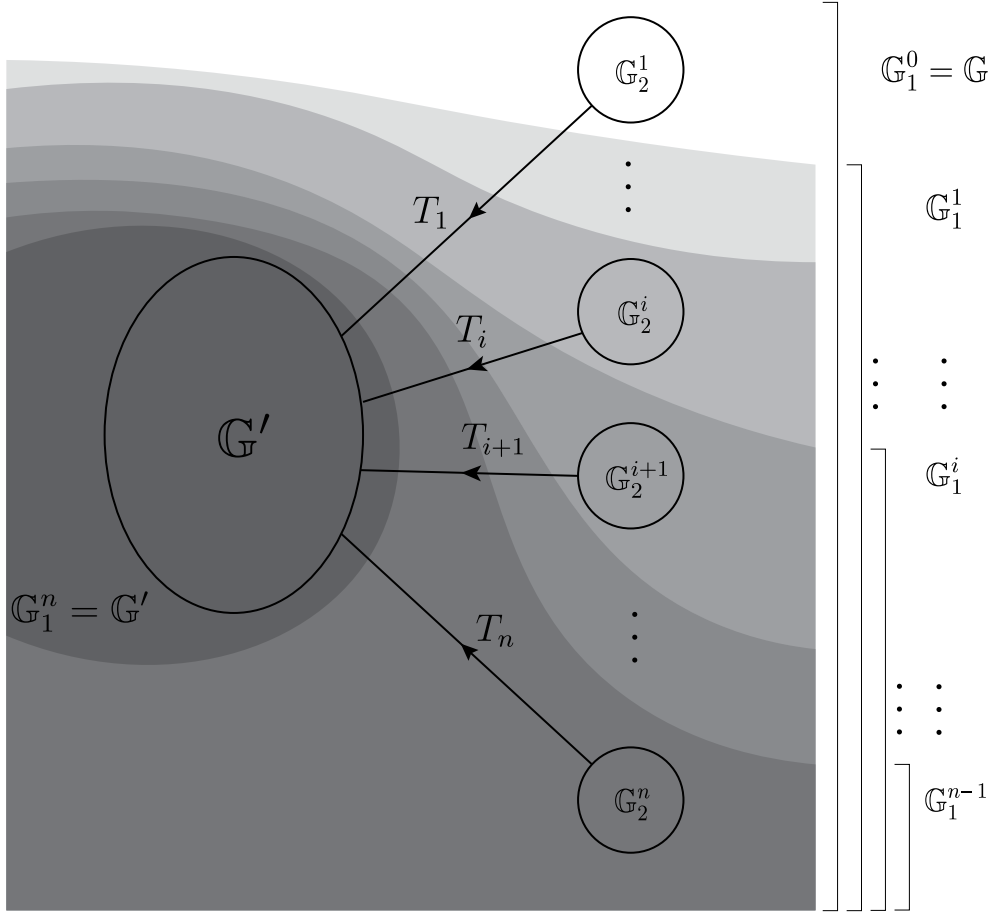}
  \caption{The schematic picture of the situation in the proof of Lemma~\ref{lemma:unlink}. The region containing $\G_1^i$ is shaded darker as $i$ increases.}
\label{fig:unlinked-lemma}
\end{figure}

We claim that $T_{i+1}, \ldots, T_n$ in $\G_1^i$ remain directed to $\G'$ as edges in $\G_{L_1^i}$. By Proposition~\ref{prop:edge-cut}, it is enough to check that $T_{i+1},\ldots, T_n$ in $\G_1^i$ are not in the downward consequence $DC(T_i)\subset \G_1^i$. Since $T_{i+1},\ldots, T_n$ are directed to $\G'$ as edges in the subgraph $\G_1^i$, if some $T_k$ among $T_{i+1},\dots,T_n$ were in $DC(T_i)$, then there would be a directed path from the end of $T_i$ to the start of $T_k$ by Proposition~\ref{prop:DC}. Then, however, $T_k$ could not split $\G_1^{k-1}$, which is a contradiction. Thus, all $T_{i+1},\ldots,T_n$ are not in $DC(T_i)$ and hence, they are still directed to $\G'$ as edges of $\G_{L_1^i}$.

To prove that $L_{\mathcal{T}}'$ is unlinked, as mentioned before, we claim that $\{T_1,\ldots,T_n\}\in\overline{\mathcal{U}}_{L_1^n} = \overline{\mathcal{U}}_{L'}$. We proceed by induction on $n$. For $n=1$, take $B\in \overline{\mathcal{U}}_L$ as the empty set $\varnothing$. Then by Proposition~\ref{prop:Brunnian}, one of the following holds:
\begin{enumerate}
    \item $\varnothing \cap A_1^1 \cup \{T_1\} = \{T_1\}\in \overline{\mathcal{U}}_{L_1^1}$ and $\varnothing\cap A_2^1=\varnothing\in \overline{\mathcal{U}}_{L_2^1}$,
    \item $\varnothing \cap A_2^1 \cup \{T_1\} = \{T_1\}\in \overline{\mathcal{U}}_{L_2^1}$ and $\varnothing\cap A_1^1=\varnothing\in \overline{\mathcal{U}}_{L_1^1}$.
\end{enumerate}
Recall that we have shown that $T_i$ exits $\G_2^i$ as the edge in the companionship graph $\G_{L_1^{i-1}}$. Since $T_1$ is exiting $\G_{L_2^1}$, by the definition of a directed edge, $T_1$ does not bound a solid torus in $S^3$ on $\G_2^1$ side. Thus, the component $(L_2^1)_{T_1}$ of $L_2^1$ cannot be unknotted, i.e., $\{T_1\}\notin\overline{\mathcal{U}}_{L_2^1}$. Thus, the condition (1) holds.

Now suppose $\{T_1,\dots,T_k\}\in\overline{\mathcal{U}}_{L_1^k}$ for $k < n$. Take $B=\{T_1,\ldots, T_k\}$. Then, by Proposition~\ref{prop:Brunnian}, one of the following holds:
\begin{enumerate}
    \item $\{T_1,\dots,T_{k+1}\}\in\overline{\mathcal{U}}_{L_1^{k+1}}$ and $\varnothing\in\overline{\mathcal{U}}_{L_2^{k+1}}$,
    \item $\{T_{k+1}\}\in\overline{\mathcal{U}}_{L_2^{k+1}}$ and $\{T_1,\dots,T_k\}\in\overline{\mathcal{U}}_{L_1^{k+1}}$.
\end{enumerate}
Similarly, since $T_{k+1}$ is exiting $\G_{L_2^{k+1}}$, we see that $\{T_{k+1}\}\notin\overline{\mathcal{U}}_{L_2^{k+1}}$ hence the condition (1) holds. Therefore, $\{T_1,\dots,T_n\}\in\overline{\mathcal{U}}_{L'}$, i.e, the sublink $(L')_\mathcal{T}$ of $L'$ is an unlink, as desired.
\end{proof}

\subsection{Complexity of links}

In this subsection, we will introduce \textit{complexity} of links. This quantity is the key ingredient for our inductive argument in the proof of Theorem~\ref{thm:link}. In particular, we will prove its monotonicity with respect to taking a sublink, and well-ordering of the set of complexities of all links. We first recall the notion of the Gromov norm for link exteriors.

\begin{defn}\cite[Definition~18]{Bud06}\label{def:Gromov norm}
    Let $L\subset S^3$ be a non-split link with its companionship graph $\G_L$. For the link exterior $E_L$ of $L$, the \textit{Gromov norm} $||E_L||$ is defined as
    $$||E_L|| = \sum\limits_{v\in \G_L} \operatorname{Vol}(v),$$
    where $\operatorname{Vol}(\cdot)$ is the hyperbolic volume for hyperbolic pieces and zero for Seifert fibered pieces.
\end{defn}

\noindent Now we define the complexity of link $L$ by combining the Gromov norm and the number of JSJ pieces of $E_L$.

\begin{defn}
\label{defn:complexity}
Let $L \subset S^3$ be a non-split link. We define the \emph{complexity $c(L)$ of $L$} as the pair $$c(L) = \left ( ||E_L||, \ \# \G_L \right) \in \mathbb{R}_{\geq 0} \times \mathbb{N},$$ where $\# \G_L$ denotes the number of vertices of $\G_L$. For a split link $L$ which consists of non-split sublinks $L_1, \ldots, L_n$, we set $$c(L) = \sum_{i=1}^n c(L_i).$$
\end{defn}

Now we consider the set of complexities of links as ordered as a subset of $\R_{\geq 0}\times \mathbb{N}$, endowed with the lexicographic order. In other words, for two links $L_1$ and $L_2$, their complexities are ordered as follows:

$$c(L_1) < c(L_2) \iff ( ||E_{L_1}|| < ||E_{L_2}|| ) \ \text{or} \ ( ||E_{L_1}|| = ||E_{L_2}|| \ \text{and} \ \#\G_{L_1} < \#\G_{L_2} ). $$

\noindent First, we prove the monotonicity of the complexity of links with respect to taking a sublink.

\begin{lem}\label{lemma:decreasing_complexity}
Let $L\subset S^3$ be a link and $L'\subset L$ a sublink. Then $c(L')\leq c(L)$.
\end{lem}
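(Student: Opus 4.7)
The plan is to reduce to the case of removing a single component from a non-split link, and then apply Thurston's Gromov norm inequality for Dehn filling together with a case analysis in the equality case. By the additivity $c(L)=\sum_i c(L_i)$ over the non-split components of a split link, one may assume $L$ is non-split; iterating single-component removal then reduces the problem to showing $c(L-K) \le c(L)$ for a single component $K$ of a non-split link $L$.

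For such an $L$ and $K$, let $v$ be the JSJ piece of $E_L$ whose boundary contains $\partial\nu K$. The exterior $E_{L-K}$ is obtained from $E_L$ by gluing the solid torus $\nu K$ back along $\partial\nu K$, which replaces $v$ by a Dehn filling $v' := v \cup \nu K$ and leaves every other JSJ piece of $E_L$ unchanged. By Thurston's inequality for the Gromov norm under Dehn filling, $\|v'\|\le\|v\|$, so
\[
\|E_{L-K}\| = \|v'\| + \sum_{u\neq v}\|u\| \le \|v\| + \sum_{u\neq v}\|u\| = \|E_L\|.
\]
If the inequality is strict, then $c(L-K) < c(L)$ in the lexicographic order and this case is done. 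Otherwise $\|v'\|=\|v\|$; Thurston's strict inequality for non-trivial Dehn fillings of cusped hyperbolic manifolds then forces $v$ to be Seifert fibered, so that $\|v\|=\|v'\|=0$.

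It remains to verify $\#\G_{L-K}\le \#\G_L$ in this Seifert fibered equality case, which is the main technical obstacle. By Proposition~\ref{prop:seifert-fibered-link-exteriors}, $L(v)$ is either a Seifert fibered link $S(p,q\mid X)$ or a key-chain link $H^m$, and $v'$ is a graph manifold obtained from $v$ by meridional Dehn filling along the component of $L(v)$ corresponding to $K$. Two effects must be balanced: the filling can cause $v'$ to decompose further into several prime or JSJ pieces, and it can also cause JSJ tori of $E_L$ incident to $v$ to become compressible, boundary-parallel, or mutually parallel in $v'$, thereby merging adjacent JSJ pieces of $E_L$ into single pieces of $E_{L-K}$. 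The hard part will be a careful case-by-case bookkeeping along the classification in Proposition~\ref{prop:seifert-fibered-link-exteriors}: one must check that the mergers produced by these collapsed tori at least compensate for any new pieces appearing inside $v'$, giving $\#\G_{L-K}\le \#\G_L$ and hence $c(L-K)\le c(L)$.
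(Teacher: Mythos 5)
Your reduction to removing a single component of a non-split link, and your treatment of the hyperbolic case, match the paper's argument: if the JSJ piece $v$ meeting $\partial\nu K$ is hyperbolic, Thurston's strict inequality for Dehn filling together with subadditivity of the Gromov norm under gluing along tori gives $\|E_{L'}\|<\|E_L\|$, hence a strict drop of the complexity. One small imprecision: your displayed relation $\|E_{L-K}\|=\|v'\|+\sum_{u\neq v}\|u\|$ is not justified as an equality, since after the filling the old JSJ tori need not stay incompressible and the decomposition along them need not be the JSJ decomposition of $E_{L-K}$; only the inequality $\leq$ is available from subadditivity. Since that is all you actually use, this is cosmetic.

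The genuine gap is the Seifert fibered case. The inequality $\#\G_{L-K}\le\#\G_L$ is exactly what has to be proved there, and your proposal stops at announcing a plan (``one must check that the mergers \dots\ at least compensate for any new pieces''): no bookkeeping is carried out and no result is invoked that performs it. This verification is not routine: meridional filling of a Seifert fibered vertex can make the resulting manifold reducible (for instance, deleting the core of a key-chain link leaves a split link), so one must simultaneously track prime-decomposition splittings inside $v'$, the collapse or parallelism of the JSJ tori of $E_L$ incident to $v$, and how the adjacent pieces reassemble, for each case of the classification in Proposition~\ref{prop:seifert-fibered-link-exteriors}. The paper closes precisely this step by observing that deleting a component is the same as splicing with the unknot and then citing \cite[Proposition~19]{Bud06}, which gives the explicit procedure for converting the graph obtained by relabeling $v$ with $L(M')$ into $\G_{L'}$, together with the observation that each step of that procedure involves only Seifert fibered vertices and prime-decomposition splittings and never increases the number of vertices. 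To complete your proof you would either have to reproduce that case analysis in full or invoke Budney's result as the paper does; as written, the central combinatorial estimate is asserted rather than proved.
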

\begin{proof}
By definition, we may assume that $L$ is non-split and $L'$ is obtained from $L$ by removing a single component $K$. Let $v\in\G_L$ be the vertex containing the boundary component $\partial(\nu K)$. Let $M'$ be the $3$-manifold obtained from the JSJ piece $v=E_{L(v)}$ by filling the component of $L(v)$ corresponding to $\partial(\nu K)$. Then $$E_{L'} = E_L \cup \nu K = (E_L-v)\cup (v\cup \nu K) = (E_L-v)\cup M'.$$

Suppose that $v$ is hyperbolic. Then, by \cite[Theorem~6.5.6]{Thu97} (see also \cite[p. 356]{Bud06}), we have that $||M'||<||v||$. Therefore, we see that
$$
||E_{L'}||\leq ||E_L-v||+||M'||<||E_L-v||+||v||=||E_L||,
$$
which implies that $c(L')<c(L)$. Note that the first inequality comes from the subadditivity of the Gromov norm with respect to gluing along a torus boundary \cite[Theorem~7.6]{Fri17}. See also \cite{Gro82, BBFIPP14, Kue15}.

Suppose now that $v$ is Seifert fibered. Then, by Proposition~\ref{prop:seifert-fibered-link-exteriors}, $L(v)$ is either a Seifert fibered link or a key-chain link. Since the link $L(M')$, given by the untwisted re-embedding of $M'$, is obtained from $L(v)$ by deleting one component, $M'$ is still Seifert fibered. Deleting a component of a link is equivalent to splicing the unknot along the component. Now, \cite[Proposition~19]{Bud06} describes a procedure to obtain $\G_{L'}$ from the labeled graph given by substituting the vertex $v$ in $\G_L$ with a vertex $v'$ labeled by $L(M')$. It is immediate to check that the complexity is non-increasing at each step of this procedure since such a step only involves Seifert fibered vertices and splittings of the graph with respect to the prime decomposition, and does not increase the number of vertices.
\end{proof}

The goal of the remaining part of this subsection is to show that the set of complexities of links is well-ordered, so that we can proceed with the induction on the complexity in the proof of Theorem~\ref{thm:link}.

\begin{lem}
\label{lemma:well-ordering}
Let $Y \subset \mathbb{R}_{>0}$ be a well-ordered subset. Then the set $$ X = \left \{  \sum_{i=1}^{m} a_i \cdot y_i \ \vert \ m \geq 0, \ a_i \in \mathbb{N}, \ y_i \in Y \right \} \subset \mathbb{R}_{\ge 0}$$ is well-ordered.
\end{lem}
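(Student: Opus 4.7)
The plan is to show $X$ contains no infinite strictly decreasing sequence, which is equivalent to well-ordering for any subset of $\mathbb{R}$. If $Y = \varnothing$, then $X=\{0\}$ is trivially well-ordered, so we may assume $Y \neq \varnothing$ and set $y_* := \min Y > 0$, which exists by well-ordering.

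First, I would establish the following auxiliary fact: \emph{every infinite sequence in a well-ordered subset of $\mathbb{R}$ admits an infinite non-decreasing subsequence.} The standard argument goes by looking at the set $S$ of indices $k$ with $a_k < a_j$ for every $j > k$. If $S$ is infinite, its elements directly give a strictly increasing subsequence. If $S$ is finite, then beyond $\max S$ one can iteratively pick indices producing a non-increasing sequence, which in a well-ordered set must eventually stabilize, yielding a constant (hence non-decreasing) subsequence.

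Next, suppose for contradiction that $x_1 > x_2 > x_3 > \ldots$ is a strictly decreasing infinite sequence in $X$. For each $k$, write $x_k = \sum_{j=1}^{s_k} y_{k,j}$ with $y_{k,j} \in Y$ listed in non-increasing order. Since $x_k \leq x_1$ and every $y_{k,j} \geq y_*$, the number of summands satisfies $s_k \leq x_1/y_*$, so $s_k$ is uniformly bounded. By pigeonhole, pass to a subsequence on which $s_k \equiv s$ is constant.

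Now I would iteratively extract subsequences using the auxiliary fact: first find a subsequence on which $(y_{k,1})_k$ is non-decreasing, then within that a subsequence on which $(y_{k,2})_k$ is non-decreasing, and so on for $j=1,\dots,s$. After these $s$ passes, we obtain an infinite subsequence (still denoted $k$) along which each $y_{k,j}$ is non-decreasing in $k$. Summing these $s$ inequalities gives that $x_k = \sum_{j=1}^{s} y_{k,j}$ is non-decreasing in $k$, contradicting the fact that the original sequence, and hence the subsequence, is strictly decreasing.

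The main obstacle is really just the auxiliary lemma about non-decreasing subsequences in well-ordered sets; once that is in hand, the pigeonhole reduction to a fixed number of summands and coordinate-wise extraction are routine. The bound $s_k \leq x_1/y_*$ crucially uses that $Y \subset \mathbb{R}_{>0}$ is bounded away from $0$, which is automatic from well-ordering.
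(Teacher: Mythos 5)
Your argument is correct, and it shares the paper's two essential ingredients: the bound on the number of summands coming from $\min Y > 0$, and a coordinate-by-coordinate extraction of monotone subsequences. The organization, however, is genuinely different. The paper proceeds by induction on the bound $k$ for the number of summands: examining one coordinate at a time, it either finds a non-increasing (hence eventually constant) subsequence, subtracts the stabilized value and invokes the inductive hypothesis, or else extracts a strictly increasing subsequence and moves to the next coordinate, reaching a contradiction only when every coordinate strictly increases. You avoid induction entirely: you first pigeonhole to a subsequence on which the number of summands is a fixed $s$, and then apply, $s$ times, the standard fact that any sequence in a well-ordered subset of $\mathbb{R}$ admits a non-decreasing subsequence (your peak-index argument for this auxiliary fact is sound, since a non-increasing sequence in a well-ordered set must stabilize); summing the $s$ non-decreasing coordinates contradicts strict decrease directly. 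Your route is somewhat cleaner---a single reusable lemma, no subtraction step, no induction---whereas the paper's recursive subtraction achieves the same end without first normalizing the number of summands. Two cosmetic remarks: sorting the summands in non-increasing order plays no role in your version, and it is worth a half-sentence to note that an infinite strictly decreasing sequence in $X \subset \mathbb{R}_{\ge 0}$ can never reach $0$, so each $s_k \ge 1$; neither point affects correctness.
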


\begin{proof}
Let $\{x_m\}_{m\in \mathbb{N}}$ be a decreasing sequence in $X$. We want to prove that it stabilizes, i.e., there exists some $m_0$ such that for all $m \geq m_0$ we have $x_m = x_{m_0}$. Observe that since the sequence is decreasing, it is sufficient to find a stabilizing subsequence.

It is clear that if $x_m =0$ for some $m$, then it stabilizes, so we assume that $x_m >0$. Let $z = \min (Y)>0$ and let $k = \lfloor x_0 /z \rfloor$. Observe that every $x_m$ is written as a sum of at most $k$ elements in $Y$ since $x_m \le x_0 < (k+1)z$. In other words, for all $m$, $x_m = y_{1,m} + \cdots + y_{k,m}$, where $y_{i,m} \in Y \cup \{0\}$.

We prove that $\{x_m\}_{m\in \mathbb{N}}$ stabilizes by induction on $k \geq 1$. For the base case $k=1$, $x_m \in Y$ for all $m$ and it stabilizes since $Y$ is well-ordered. For the inductive case, assume that $k\geq 2$. Then write $$x_m = y_{1,m} + \cdots + y_{k,m} \quad \text{with} \ y_{i,m} \in Y \cup \{0\} \ \text{and} \ y_{i,m} \geq y_{i+1,m} \ \text{ for all } i . $$ 

Consider the sequence $\{y_{k,m}\}_{m \in \mathbb{N}}$. If this sequence admits a subsequence $\{y_{k,m_j}\}$ with the property $y_{k,m_j} \geq y_{k,m_{j+1}}$, then it implies that at some point it stabilizes at a value of $y_k \in Y\cup \{0\}$. Then the sequence $\{x_{m_j} -y_k \}$ is \emph{definitively} composed by elements of $X$ which can be written as a sum of at most $k-1$ elements of $Y\cup \{0\}$. By induction, $\{x_{m_j}-y_k\}$ stabilizes and hence so does $\{x_m\}$.

Suppose now that $\{y_{k,m}\}$ admits no infinite decreasing subsequence. Then necessarily we can extract a strictly increasing subsequence $y_{k,m_j} < y_{k,m_{j+1}}$. Now we do the same with the sequence $\{y_{k-1, m}\}$: 
\begin{itemize}
    \item if it admits an infinite decreasing subsequence, it stabilizes to $y_{k-1} \in Y\cup \{0\}$ and we consider $\{x_{m_j} - y_{k-1} \}$,
    \item otherwise, we get a strictly increasing subsequence $y_{k-1, m_j} < y_{k-1, m_{j+1}}$.
\end{itemize}

This process ends when we consider $\{y_{1, m}\}$. If it does not stabilize, then we extract a subsequence $y_{1, m_j} < y_{1, m_{j+1}}$. But then $y_{i, m} < y_{i, m+1}$ for all $i$ and $m$. Hence $x_m < x_{m+1}$, leading a contradiction. Therefore, $\{x_m\}$ stabilizes, and hence $X$ is well-ordered.
\end{proof}

Since the set of volumes of hyperbolic $3$-manifolds is known to be well-ordered as a subset of $\mathbb{R}_{>0}$ by Thurston--J{\o}rgensen theorem \cite[Corollary~6.6.1]{Thu97}, we have the following corollary from this fact and Lemma~\ref{lemma:well-ordering}.

\begin{cor}
\label{cor:-TJ-volume}
The set $$\operatorname{Vol} = \left \{ \sum_{i=1}^{n} a_i \operatorname{Vol}(M_i) : a_i \in \mathbb{N}, \ M_i \ \text{is a hyperbolic} \ 3\text{-manifold} \right \} \subset \mathbb{R}_{\geq 0}$$
is well-ordered.    
\end{cor}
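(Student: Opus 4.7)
The plan is to apply Lemma~\ref{lemma:well-ordering} directly, taking the set $Y$ to be the set of volumes of hyperbolic $3$-manifolds. By the Thurston--J{\o}rgensen theorem \cite[Corollary~6.6.1]{Thu97}, this set $Y \subset \mathbb{R}_{>0}$ is well-ordered. The set $\operatorname{Vol}$ defined in the corollary then coincides precisely with the set $X$ constructed from $Y$ in Lemma~\ref{lemma:well-ordering}, since every element of $\operatorname{Vol}$ is a finite $\mathbb{N}$-linear combination of elements of $Y$ (and conversely).

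More explicitly, I would first observe that $Y = \{\operatorname{Vol}(M) : M \text{ is a hyperbolic } 3\text{-manifold}\}$ sits inside $\mathbb{R}_{>0}$ because hyperbolic $3$-manifolds have strictly positive volume. Thurston--J{\o}rgensen gives that $Y$ has order type $\omega^\omega$, so in particular it is well-ordered. Then I would invoke Lemma~\ref{lemma:well-ordering} verbatim to conclude that $X = \operatorname{Vol}$ is well-ordered in $\mathbb{R}_{\ge 0}$ (the inclusion of $0$ causes no issue, since any nonempty subset either contains elements of $X \cap \mathbb{R}_{>0}$, whose well-ordering follows from the lemma, or consists solely of $0$).

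There is essentially no obstacle here: the work has already been carried out in Lemma~\ref{lemma:well-ordering}, and all that is needed is to cite the Thurston--J{\o}rgensen result for the hypothesis. The only mild subtlety is confirming that the $m=0$ case in the definition of $X$ (giving the empty sum $0$) matches the set $\operatorname{Vol}$ as written; this follows since a well-ordered set remains well-ordered upon adjoining a minimum element.
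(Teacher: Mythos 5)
Your proposal is correct and takes essentially the same route as the paper: the paper also deduces the corollary by taking $Y$ to be the set of volumes of hyperbolic $3$-manifolds, which is well-ordered by the Thurston--J{\o}rgensen theorem, and then applying Lemma~\ref{lemma:well-ordering}. Your extra remarks (order type $\omega^\omega$, the empty sum giving $0$) are harmless refinements not needed in the paper's one-line argument.
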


If $(X,<_X)$ and $(Y,<_Y)$ are two well-ordered sets, then so is $(X\times Y,<_{lex})$. Therefore, we have the following immediate conclusion from Corollary~\ref{cor:-TJ-volume}.

\begin{cor}\label{cor:well-ordered}
The set $(\operatorname{Vol} \times \mathbb{N}, <_{lex})$ is well-ordered. In particular, $$\{ c (L) \ | \ L \subset S^3 \ \text{a link} \}$$ is a well-ordered subset of $\operatorname{Vol} \times \mathbb{N}$.
\end{cor}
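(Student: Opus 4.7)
The plan is to deduce the statement directly from Corollary~\ref{cor:-TJ-volume} by invoking the standard fact, recalled just above the corollary, that the lexicographic product of two well-ordered sets is well-ordered. For completeness I would sketch this lexicographic principle: given any non-empty subset $S \subset \operatorname{Vol} \times \mathbb{N}$, let $v_0 = \min\{v \in \operatorname{Vol} : (v,m) \in S \text{ for some } m\}$, which exists because $\operatorname{Vol}$ is well-ordered by Corollary~\ref{cor:-TJ-volume}, and then let $m_0 = \min\{m \in \mathbb{N} : (v_0, m) \in S\}$, which exists since $\mathbb{N}$ is well-ordered. Then $(v_0, m_0)$ is the minimum of $S$ in the lexicographic order, proving well-orderedness of $\operatorname{Vol} \times \mathbb{N}$. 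Equivalently, one can argue with decreasing sequences: the first coordinate of a decreasing sequence in $\operatorname{Vol} \times \mathbb{N}$ is weakly decreasing and so stabilizes by Corollary~\ref{cor:-TJ-volume}, after which the second coordinate is a weakly decreasing sequence in $\mathbb{N}$ and must stabilize as well.

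The remaining step is to verify the inclusion $\{ c(L) \mid L \subset S^3 \text{ a link}\} \subset \operatorname{Vol} \times \mathbb{N}$. For a non-split link $L$, Definition~\ref{def:Gromov norm} expresses $\lVert E_L \rVert$ as a sum of hyperbolic volumes of the hyperbolic JSJ pieces of $E_L$ (the Seifert fibered pieces contributing zero), so $\lVert E_L\rVert \in \operatorname{Vol}$, while $\#\G_L \in \mathbb{N}$ trivially; thus $c(L) \in \operatorname{Vol} \times \mathbb{N}$. For a split link, $c(L) = \sum_i c(L_i)$ over its non-split sublinks, and since both $\operatorname{Vol}$ and $\mathbb{N}$ are closed under finite sums (the former by its very definition as a set of non-negative integer linear combinations of hyperbolic volumes), one still has $c(L) \in \operatorname{Vol} \times \mathbb{N}$. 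Since a subset of a well-ordered set is well-ordered, the second assertion of the corollary follows. There is no real obstacle here: all the substantive content has been absorbed into Lemma~\ref{lemma:well-ordering} and the Thurston--J\o rgensen theorem used in Corollary~\ref{cor:-TJ-volume}; the present corollary is a purely formal combination of these with the lexicographic product principle.
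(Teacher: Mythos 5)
Your proposal is correct and follows the same route as the paper: the paper likewise deduces the corollary from Corollary~\ref{cor:-TJ-volume} together with the standard fact that the lexicographic product of two well-ordered sets is well-ordered, and your extra verifications (the proof of the lexicographic principle and the inclusion $c(L)\in\operatorname{Vol}\times\mathbb{N}$, including the split-link case) just make explicit what the paper leaves implicit.
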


\noindent Therefore, we can use complexity of links for an induction argument. We always use $<$ instead of $<_{lex}$.

\subsection{Smith theory}\label{subsection-Smith}

By the resolution of the Smith conjecture \cite{Wal69, MB84}, the fixed point set of a given finite order self-diffeomorphism of the $3$-sphere is well-understood. In this subsection, we recall some fundamental results and prove a lemma.

\begin{thm}\cite[Section~3, Theorems~5.1 and 5.2]{Bre72}
\label{thm:smith-hom-sphere}
Let $M$ be a compact smooth $\Z_pHS^n$ (resp. $\Z_p HB^n$) for a prime $p$. If $\tau\in\Diff(M)$ is of finite order $p$, then $\Fix(\tau)$ is either \begin{itemize}
    \item empty,
    \item a $\Z_pHS^k$ (resp. $\Z_pHB^k$) for some $k\leq n$.
\end{itemize}
\end{thm}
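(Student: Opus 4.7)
The plan is to reduce to classical Smith theory, for which I would refer to Bredon~\cite{Bre72}. The argument has two ingredients — linearization near the fixed points and the Smith inequality — and once both are in hand the conclusion is elementary bookkeeping.

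First, since $\tau$ is a smooth diffeomorphism of prime order $p$ on the compact smooth manifold $M$, the equivariant tubular neighborhood theorem — obtained by averaging a Riemannian metric to a $\Z_p$-invariant one and linearizing the action via the exponential map around fixed points — shows that $\Fix(\tau)$ is a disjoint union of smooth closed submanifolds of $M$ (with boundary $\Fix(\tau)\cap\partial M$ in the $B^n$ case). Indeed, locally the action is modeled on a linear $\Z_p$-representation, and the fixed locus of such a representation is a linear subspace. In particular $\dim \Fix(\tau)\leq n$.

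Second, I would invoke the \emph{Smith inequality}
\[
\sum_i \dim_{\Z_p} H_i(\Fix(\tau);\Z_p) \;\leq\; \sum_i \dim_{\Z_p} H_i(M;\Z_p),
\]
established via the Smith special (co)homology sequences built from the chain-level operators $\sigma = 1+\tau+\cdots+\tau^{p-1}$ and $\rho = 1-\tau$; these satisfy $\sigma\rho = \rho\sigma = 1-\tau^p = 0$, and in characteristic $p$ one has $(1-\tau)^p = 1-\tau^p = 0$, making $\rho$ nilpotent and yielding long exact sequences that compare $H_*(M;\Z_p)$ with $H_*(\Fix(\tau);\Z_p)$. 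This is the main technical input and is the only non-formal step of the proof.

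To finish, write $F=\Fix(\tau)$ and assume $F\neq\varnothing$. In the sphere case the right-hand side of the Smith inequality equals $2$; in the ball case it equals $1$. Each connected component of $F$ contributes $1$ to $\dim_{\Z_p} H_0(F;\Z_p)$, and each positive-dimensional mod-$p$ orientable component contributes another $1$ in its top mod-$p$ degree. Mod-$p$ orientability is automatic: for $p=2$ trivially, and for odd $p$ the map $\tau$ must act by $+1$ on $H_n(M;\Z)$ (a $p$-th root of unity lying in $\{\pm 1\}$ is $+1$), so $\tau$ is orientation-preserving on $M$, and the normal bundle to each component of $F$ decomposes into $2$-dimensional irreducible $\Z_p$-subrepresentations, which is orientable. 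Combining these lower bounds with the Smith inequality forces $F$ to be connected (except in the sphere case with $k=0$, where $F\cong S^0$) and its mod-$p$ homology to agree with that of $S^k$ or $B^k$ respectively, where $k=\dim F\leq n$ — which is exactly the desired conclusion.
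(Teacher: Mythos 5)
The paper itself gives no proof of this statement---it is quoted directly from Bredon---so the question is only whether your sketch is a complete route to the cited result. The linearization step (equivariant metric, exponential map) and the mod-$p$ orientability argument for the fixed set are fine. The genuine gap is in the final bookkeeping: the Smith inequality alone does \emph{not} force $\Fix(\tau)$ to have the mod-$p$ homology of a sphere. In the sphere case the inequality only gives $\sum_i \dim_{\Z_p} H_i(\Fix(\tau);\Z_p)\le 2$, and your component-by-component lower bounds (one for $H_0$ of each component, one more for the top class of each positive-dimensional component) are perfectly consistent with $\Fix(\tau)$ being a \emph{single point}, whose total mod-$p$ Betti number is $1$. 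A single point is not a $\Z_pHS^k$ for any $k$, so the inference ``the inequality forces $H_*(\Fix(\tau);\Z_p)\cong H_*(S^k;\Z_p)$'' is invalid as stated. This is not a pedantic case: ruling out exactly one fixed point is precisely where the full strength of Smith theory is used (it is the reason one-fixed-point actions of $p$-groups on homology spheres do not exist, while they do exist for groups such as $A_5$), so it cannot be absorbed into ``elementary bookkeeping.''

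To close the gap you need an additional input beyond the Betti-number inequality. One fix is the companion congruence $\chi(\Fix(\tau))\equiv\chi(M)\pmod p$, which comes out of the same Smith sequences and excludes the single-point case, since $\chi(\mathrm{pt})=1$ while $\chi(M)\equiv 0$ or $2 \pmod p$ and $1\not\equiv 0,2 \pmod p$ for every prime $p$. Alternatively, follow Bredon's actual argument, which does not merely bound the total Betti number but computes, via the special homology groups built from $\rho=1-\tau$ and $\sigma=1+\tau+\cdots+\tau^{p-1}$ and an induction over dimension, that $H_*(\Fix(\tau);\Z_p)\cong H_*(S^r;\Z_p)$ on the nose. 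Your ball-case bookkeeping (total Betti number at most $1$ forces a nonempty fixed set to be connected and mod-$p$ acyclic) is fine as written; it is only the sphere case that needs the extra ingredient.
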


\begin{cor}
\label{thm:smith-disk}
For a prime $p$, let $\tau\in\Diff^+(D^2)$ be a map of order $p$. Then $\Fix(\tau)$ is a single point.
\end{cor}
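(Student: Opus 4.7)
My plan is to apply the Smith-theoretic input, Theorem~\ref{thm:smith-hom-sphere}, to $D^2$, which is a smooth $\Z_p HB^2$, and to the order-$p$ diffeomorphism $\tau$. First, I would observe that $\Fix(\tau)$ is nonempty: since $D^2$ is contractible, the Lefschetz number $L(\tau)$ equals $1$, so the Lefschetz fixed-point theorem (or Brouwer directly) gives at least one fixed point. Theorem~\ref{thm:smith-hom-sphere} then forces $\Fix(\tau)$ to be a $\Z_p HB^k$ for some $k\in\{0,1,2\}$. By the convention recalled in the Notations, $\Z_p HB^0$ is a $0$-dimensional manifold with the $\Z_p$-homology of a point, namely a single point, so the task reduces to ruling out $k=1$ and $k=2$.

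The case $k=2$ is immediate. A $2$-dimensional smooth submanifold of $D^2$ is open by invariance of domain, and fixed-point sets are always closed, so by connectedness of $D^2$ one would have $\Fix(\tau)=D^2$. This forces $\tau=\id$, contradicting the hypothesis that $\tau$ has order exactly $p$.

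For the case $k=1$, $\Fix(\tau)$ would be a connected smooth $1$-submanifold of $D^2$ with vanishing first $\Z_p$-homology. Since a compact connected $1$-manifold is either $S^1$ or a closed interval, and $S^1$ is excluded by the homology, $\Fix(\tau)$ must be an arc. By Bochner's linearization theorem, the fixed-point set of a smooth finite-order action is a smooth submanifold whose boundary is contained in $\partial D^2$, so the two endpoints of the arc are fixed points of $\tau|_{\partial D^2}$. However, since $\tau$ preserves orientation and must preserve $\partial D^2$, the restriction $\tau|_{\partial D^2}$ is an orientation-preserving diffeomorphism of $S^1$ of order dividing $p$; by averaging any Riemannian metric, such a map is conjugate to a rigid rotation, and hence either is the identity (with infinitely many fixed points) or has no fixed points at all. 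Having exactly two fixed points is therefore impossible, completing the contradiction.

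The delicate step is this last case: the other two possibilities fall out formally once Smith theory is in hand, but $k=1$ requires combining the constraint on the boundary of the fixed submanifold with the classification of finite-order orientation-preserving diffeomorphisms of $S^1$ to extract the contradiction.
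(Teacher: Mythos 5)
Your proof is correct and is essentially the deduction from Theorem~\ref{thm:smith-hom-sphere} that the paper leaves implicit: nonemptiness via Brouwer, then exclusion of the cases $k=1,2$, so that $\Fix(\tau)$ is a $\Z_p HB^0$, i.e., a point. Two small touch-ups: in the $k=2$ case invariance of domain only gives openness at manifold-interior points of the fixed set, but a fixed set with nonempty interior already forces $\tau=\id$ (average a metric so that $\tau$ is an isometry of the connected manifold $D^2$); and in the $k=1$ case the identity subcase should be excluded because it would put $\partial D^2\subset\Fix(\tau)$, which cannot lie in an arc --- you only established at least two boundary fixed points, not exactly two, so the counting phrase does not quite close the argument, though the needed line is immediate. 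A quicker way to kill $k=1$ altogether: at an interior fixed point the differential of a finite-order orientation-preserving diffeomorphism is conjugate into $SO(2)$, so its fixed subspace is $\{0\}$ or all of $\R^2$; hence the fixed set has even codimension and can never be $1$-dimensional.
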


\begin{thm}\cite{MB84}\cite[Section~10]{Kaw96}
\label{thm:smith-orthogonal}
Every element $\tau\in\Diff(S^3)$ of finite order is conjugate to an element of $\Or(4)$. In particular,
\begin{enumerate}
    \item If $\tau$ is orientation-preserving, then $\Fix(\tau)$ is either $\varnothing$, or an unknotted $S^1$, or $S^3$.
    \item If $\tau$ is orientation-reversing, then $\Fix(\tau)$ is either $S^0$ or $S^2$.
\end{enumerate}
Moreover, when $\Fix(\tau)$ is an unknotted $S^1$, $\tau$ acts as a rotation with axis $\Fix(\tau)$. 
\end{thm}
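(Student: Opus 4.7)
The plan is to isolate the deep content---the conjugation to an orthogonal action---from the elementary enumeration of fixed sets of elements of $\Or(4)$ acting on $S^3\subset \R^4$.

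First I would handle the orientation-preserving case. Let $\tau\in\Diff^+(S^3)$ have finite order $n$ and pick a prime $p\mid n$, so that $\sigma = \tau^{n/p}$ has order $p$. By Smith theory (Theorem~\ref{thm:smith-hom-sphere}), $\Fix(\sigma)$ is either empty or a $\Z_p$-homology sphere of dimension at most $3$. Averaging a Riemannian metric to make $\sigma$ an isometry, $d\sigma_x\in SO(3)$ at a fixed point $x$, and any nontrivial element of $SO(3)$ has a $1$-dimensional $(+1)$-eigenspace; hence $\dim\Fix(\sigma)\in\{1,3\}$, the latter case being $\sigma=\id$. When $\dim\Fix(\sigma)=1$, the Smith Conjecture \cite{MB84} guarantees that $\Fix(\sigma)$ is an \emph{unknotted} circle, and equivariant geometrization then produces a diffeomorphism of $S^3$ conjugating $\sigma$ to a standard rotation in $SO(4)$ with axis $\Fix(\sigma)$. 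In the free case $\Fix(\sigma)=\varnothing$, Perelman's elliptization identifies $S^3/\langle\sigma\rangle$ with a spherical space form, and rigidity results for such forms realize $\sigma$ as a linear free action. An induction on $n$ promotes this conjugation from $\sigma$ to $\tau$: once $\sigma$ is orthogonal, $\tau$ normalizes $\langle\sigma\rangle$ and preserves $\Fix(\sigma)$, and a further equivariant conjugation places $\tau$ itself into $\Or(4)$.

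For $\tau$ orientation-reversing of finite order $n$ (necessarily even), I would apply the previous paragraph to $\tau^2\in\Diff^+(S^3)$ to conjugate it into $SO(4)$; since $\tau$ normalizes $\langle\tau^2\rangle$ and sends $\Fix(\tau^2)$ to itself, a further equivariant conjugation moves $\tau$ into $\Or(4)\setminus SO(4)$. Nonemptiness of $\Fix(\tau)$ in this case is automatic from the Lefschetz fixed-point formula, which gives $L(\tau)=1+1=2$.

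Once $\tau\in\Or(4)$, the enumeration is linear algebra. Let $V\subset\R^4$ be the $(+1)$-eigenspace of $\tau$, so $\Fix(\tau)=V\cap S^3$. For $\tau\in SO(4)$, non-real eigenvalues come in conjugate pairs and $\dim V$ is even ($0$, $2$, or $4$), yielding $\Fix(\tau)\in\{\varnothing,S^1,S^3\}$ with the $S^1$ a great, hence unknotted, circle. For $\tau\in\Or(4)\setminus SO(4)$, $\det\tau=-1$ forces $\dim V$ to be odd ($1$ or $3$), yielding $\Fix(\tau)\in\{S^0,S^2\}$. The concluding sentence follows because in orthogonal coordinates $\tau$ fixes $V$ pointwise and acts on the orthogonal $2$-plane $V^\perp$ by a nontrivial finite-order rotation, which restricts to a rotation of $S^3$ about the great circle $V\cap S^3$. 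The only hard step is the conjugation into $\Or(4)$, which packages the Smith Conjecture together with equivariant geometrization; the authors appropriately cite this from \cite{MB84} and \cite[Section~10]{Kaw96}.
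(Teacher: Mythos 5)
The paper does not prove this statement at all: it is imported wholesale from \cite{MB84} and \cite[Section~10]{Kaw96}, so the only thing you can actually be graded on is whether the listed consequences follow from the conjugacy statement, and your linear-algebra deduction of them is correct. For $\tau\in SO(4)$ the determinant forces the $(+1)$-eigenspace $V$ to have even dimension, giving $\Fix(\tau)=V\cap S^3\in\{\varnothing, \text{great circle}, S^3\}$, and the great circle is unknotted; for $\tau\in\Or(4)\setminus SO(4)$ the dimension of $V$ is odd, giving $S^0$ or $S^2$; and when $\dim V=2$ the restriction of $\tau$ to $V^{\perp}$ must lie in $SO(2)$ (else $\det\tau=-1$), so $\tau$ is a rotation about $\Fix(\tau)$. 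This is exactly the right way to read items (1), (2) and the final sentence, and it is a useful supplement to the paper's bare citation. (The Lefschetz remark is redundant once $\tau$ is orthogonal, but harmless.)

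The part that would not survive as an actual proof is your sketch of the conjugation into $\Or(4)$ itself. Passing to a prime-order power $\sigma=\tau^{n/p}$ and applying Smith theory plus the Smith Conjecture to $\sigma$ is fine, but the two ``promotion'' steps --- ``once $\sigma$ is orthogonal, $\tau$ normalizes $\langle\sigma\rangle$ \ldots and a further equivariant conjugation places $\tau$ itself into $\Or(4)$'', and the analogous step for $\tau$ orientation-reversing via $\tau^2$ --- are assertions of precisely the hard content, not consequences of normalizing $\langle\sigma\rangle$ or preserving $\Fix(\sigma)$. In the extreme case $\tau^2=\id$ your reduction says nothing, yet orientation-reversing involutions are already a nontrivial instance of the theorem. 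A complete argument needs equivariant geometrization applied to the whole cyclic group $\langle\tau\rangle$ (Thurston's orbifold theorem, or the equivariant Ricci-flow results of Dinkelbach and Leeb), not to a single prime-order subgroup; in particular the fixed-point-free and orientation-reversing cases are not covered by \cite{MB84} alone. Since you explicitly fence off the conjugation as a black box cited from the same sources the paper uses, the proposal is acceptable, but the inductive sketch should either be removed or replaced by an honest citation of the equivariant geometrization theorem.
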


\begin{thm}
\label{thm:smith-orthogonal}
Every element $\tau\in\Diff(S^3)$ of finite order is conjugate to an element of $\Or(4)$. In particular,
\begin{enumerate}
    \item If $\tau$ is orientation-preserving, then $\Fix(\tau)$ is either $\varnothing$, or an unknotted $S^1$, or $S^3$.
    \item If $\tau$ is orientation-reversing, then $\Fix(\tau)$ is either $S^0$ or $S^2$.
\end{enumerate}
Moreover, when $\Fix(\tau)$ is an unknotted $S^1$, $\tau$ acts as a rotation with axis $\Fix(\tau)$. 
\end{thm}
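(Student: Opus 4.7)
The plan is to reduce the theorem to the classical resolution of the Smith conjecture together with equivariant geometrization. I would first treat the orientation-preserving case and then bootstrap to the orientation-reversing one.

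For orientation-preserving $\tau$ of order $n$, I argue by induction on $n$. Writing $n = p\cdot m$ with $p$ prime, let $\sigma = \tau^m$, which has order $p$. By Theorem~\ref{thm:smith-hom-sphere} applied to $\sigma$, $\Fix(\sigma)$ is either empty or a $\Z_p HS^k$ for some $k\le 3$. Since $\sigma$ is orientation-preserving, its fixed set has even codimension, so $\Fix(\sigma)$ is $\varnothing$, a $\Z_p HS^1$, or all of $S^3$. If $\sigma = \id$ then the claim follows from the inductive hypothesis applied to $\tau$ in the smaller quotient cyclic group. In the codimension-two case, the Smith conjecture \cite{MB84} guarantees that this $\Z_p HS^1$ is an unknotted $S^1$ in $S^3$. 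Coupled with the orbifold theorem of Thurston together with Perelman's geometrization, the quotient $S^3 / \langle \sigma \rangle$ is a spherical orbifold, which forces $\sigma$ to be conjugate to a rotation in $\Or(4)$. Since $\tau$ commutes with $\sigma$, it preserves the axis of $\sigma$ and one shows inductively that it too can be realized orthogonally, giving the conclusion on conjugacy into $\Or(4)$ and, in particular, the rotation description whenever $\Fix(\tau)$ is an unknotted $S^1$.

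For orientation-reversing $\tau$, I apply the previous case to $\tau^2$, which is orientation-preserving of order dividing $n/2$. Since $\tau$ commutes with $\tau^2$, it preserves $\Fix(\tau^2)$ and normalizes the cyclic subgroup $\langle \tau^2 \rangle$. A Lefschetz number computation on $S^3$ gives $L(\tau) = 2$, so $\Fix(\tau) \ne \varnothing$; Theorem~\ref{thm:smith-hom-sphere} together with the odd-codimension constraint for orientation-reversing actions then forces $\Fix(\tau)$ to be $S^0$ or $S^2$. Conjugating so that $\tau^2$ is orthogonal and analyzing the induced action of $\langle \tau \rangle / \langle \tau^2 \rangle$ on this orthogonal configuration identifies $\tau$ itself with an element of $\Or(4)$.

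The main obstacle is the Smith conjecture: the statement that a $\Z_p$-homology circle arising as the fixed set of a finite cyclic action on $S^3$ is necessarily unknotted. This relies on deep $3$-manifold topology, assembled in the Morgan--Bass volume \cite{MB84}. Promoting the quotient to a spherical orbifold via equivariant geometrization is equally non-trivial. Once these inputs are granted, the remaining steps — Smith theory, Lefschetz counting, and the classification of finite subgroups of $\Or(4)$ — reduce to routine case analysis.
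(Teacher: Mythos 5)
The paper does not actually prove this statement: it is quoted as a known deep theorem, with references to the Morgan--Bass volume on the Smith conjecture \cite{MB84} and to \cite[Section~10]{Kaw96}; in full generality the conjugacy into $\Or(4)$ rests on geometrization of finite group actions (the orbifold theorem for non-free actions, and Perelman's elliptization, in equivariant form, for free ones). So your task is really to assemble these citations correctly, and the assembly you give has two genuine gaps. First, your orientation-preserving induction never treats the case $\Fix(\sigma)=\varnothing$. Smith theory explicitly allows it, and it is exactly the case where your tools fail: the Smith conjecture says nothing, and the orbifold theorem requires a non-empty singular locus. A fixed-point-free cyclic action of prime order on $S^3$ is linearizable only via elliptization (Perelman; Dinkelbach--Leeb for the equivariant statement), which you invoke only inside the codimension-two branch. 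The branch ``if $\sigma=\id$'' is also vacuous, since $\sigma=\tau^m$ has exact order $p>1$, and ``$\tau$ in the smaller quotient cyclic group'' does not define an action on $S^3$.

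Second, the bootstrap from a linearized power to the full map is unjustified in both cases. Knowing that $\sigma$ (or $\tau^2$) is conjugate to an orthogonal rotation and that $\tau$ commutes with it only says $\tau$ normalizes a rotation; concluding that $\tau$ itself is conjugate into $\Or(4)$ is essentially the theorem you are trying to prove, and the correct route is to geometrize the action of the whole group $\langle\tau\rangle$ at once rather than element by element. The failure is starkest in the orientation-reversing case: if $\tau$ is an involution, then $\tau^2=\id$ and your reduction yields nothing, yet orientation-reversing involutions (with $\Fix(\tau)\cong S^0$ or $S^2$) are precisely the case this paper uses most heavily, and their linearization is itself a deep result. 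Also note that Theorem~\ref{thm:smith-hom-sphere} applies to maps of prime order, so it does not directly constrain $\Fix(\tau)$ when $\tau$ has composite even order; the clean derivation of items (1) and (2) is as a corollary of the conjugacy into $\Or(4)$ (the fixed set of a linear map is a great subsphere, carried to an unknotted circle or a standard $S^0$, $S^2$ by the conjugating diffeomorphism), not as an input to it. Your Lefschetz computation $L(\tau)=2$, showing $\Fix(\tau)\neq\varnothing$ for orientation-reversing $\tau$, is the one step that is both correct and correctly placed.
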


The above results will be used several times to understand fixed point sets of certain finite order maps. Now we prove a lemma. This lemma will be used in Lemma~\ref{lemma:elementary-graph} to obtain the cardinality of the orbit of each component of a hyperbolic link, with respect to a rotation of an unknotted component of the link. See Figure~\ref{fig:LUA} for examples.

\begin{lem}\label{lemma:periodic_unlink}
Let $L\subset S^3$ be an unlink and $\rho:S^3\to S^3$ be a rotation of order $n>1$ around an unknotted axis $A$, such that $A\cap L=\varnothing$ and $\rho(L)=L$.
For each component $K$ of $L$, set $$n_K=\min\{i\geq1\;|\;\rho^i(K)=K\}.$$ Then either of the following holds:\begin{itemize}
    \item $L\cup A$ is not a hyperbolic link,
    \item $n_K=n$ for any component $K$ of $L$.
\end{itemize} 
\end{lem}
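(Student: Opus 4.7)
The plan is to argue by contradiction: assume $L \cup A$ is hyperbolic and that some component $K$ of $L$ satisfies $n_K = d < n$. Since $\{j \ge 1 : \rho^j(K)=K\}$ is a subgroup of $\Z/n$, $d$ divides $n$; in particular $m := n/d > 1$. Picking a prime $p \mid m$ and setting $\sigma = \rho^{n/p}$, I obtain an element of order exactly $p$ that fixes $K$ setwise, with $\Fix(\sigma) = A$ by Theorem~\ref{thm:smith-orthogonal}. A preliminary observation is that $\sigma$ acts non-trivially on $K$: otherwise $K \subset \Fix(\sigma) = A$, contradicting $A \cap L = \varnothing$.

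Since $L$ is an unlink and $\sigma$ preserves both $K$ and $L - K$ setwise, I would next invoke an equivariant form of Dehn's lemma (in the spirit of Meeks--Yau) to produce a $\sigma$-equivariant embedded disk $D \subset S^3 - (L - K)$ with $\partial D = K$. This step is the most delicate part of the argument, since we simultaneously need $D$ to be equivariant under $\sigma$ and disjoint from the remaining components of the unlink.

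With such a disk in hand, Smith theory on the disk gives the key constraint: since $\sigma|_D$ is a non-trivial orientation-preserving diffeomorphism of $D^2$ of prime order, Corollary~\ref{thm:smith-disk} yields $|\Fix(\sigma|_D)| = 1$. Because every point of $D \cap A$ is $\sigma$-fixed, this forces $|D \cap A| \le 1$, and the proof concludes by examining the two remaining possibilities.

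If $D \cap A = \varnothing$, then $D$ provides a compressing disk for the cusp torus $\partial \nu K$ in $E_{L \cup A}$, contradicting the incompressibility of cusps of finite-volume hyperbolic $3$-manifolds. Otherwise $D$ meets $A$ transversely at a single point $q$, and removing a small open disk of $D$ around $q$ produces a properly embedded annulus $\mathcal{A} \subset E_{L \cup A}$ whose two boundary components are a longitude of $K$ on $\partial \nu K$ and a meridian of $A$ on $\partial \nu A$. This annulus is incompressible, since its core represents $\lk(K,A)\mu_A = \pm \mu_A \neq 0$ in $H_1(E_{L \cup A})$, and not boundary parallel since its two boundary circles lie on distinct cusp tori. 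Its existence contradicts the anannularity of finite-volume hyperbolic $3$-manifolds, yielding the desired contradiction.
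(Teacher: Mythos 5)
Your overall skeleton (reduce to a prime-order power $\sigma=\rho^{n/p}$ fixing $K$, apply Smith theory on an invariant surface to see it meets the axis in the fixed set of $\sigma$ restricted to it, then contradict hyperbolicity with an essential annulus running from a longitude of $K$ to a meridian of $A$) is sound, and your endgame coincides with the paper's. The problem is the step you yourself flag as delicate: the existence of a $\sigma$-equivariant embedded disk $D$ with $\partial D=K$ and $D\cap(L-K)=\varnothing$. This is asserted, not proved, and it is not a routine consequence of the Meeks--Yau results you invoke. The equivariant Dehn lemma requires the boundary curve to be an invariant curve lying in $\partial M$; to use it you would work in $E_L$ with an invariant representative of the preferred longitude on $\partial\nu K$, but such an invariant longitude need not exist (a free finite cyclic action on the solid torus $\nu K$ admits invariant curves on $\partial\nu K$ in only one framing class mod $p$, and nothing forces this to be the Seifert framing). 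If instead you apply the equivariant loop theorem to $\partial \nu K\subset \partial E_L$, you only get a compressing disk whose $\sigma$-translates are equal or pairwise disjoint; since the translates bound \emph{different} longitudes, the disjoint-orbit case is not excluded, and your Smith-theory step (exactly one fixed point of $\sigma|_D$, hence $|D\cap A|=1$) does not apply to a non-invariant disk. Handling that case requires a genuine additional argument, which the proposal does not contain. So the crux of the proof is an unproved equivariant existence statement; note that when $L-K=\varnothing$ it is exactly Edmonds' equivariant Seifert surface theorem \cite{Edm84}, but that theorem gives no control on intersections with the remaining components.

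For comparison, the paper avoids this difficulty rather than solving it. For a one-component $L$ it cites Edmonds \cite{Edm84} to get a $\rho$-invariant disk (no other components to avoid), uses Corollary~\ref{thm:smith-disk} to see the disk meets $A$ once, and gets the essential annulus as you do. For $|L|\geq2$ it does not look for a disk bounded by $K$ at all: it takes a splitting sphere $S$ separating $K$ from $L-K$, applies Dunwoody's equivariant sphere theorem \cite{Dun85} to arrange $\tau(S)=S$ or $\tau(S)\cap S=\varnothing$ for $\tau=\rho^{n_K}$, rules out the disjoint case by an elementary nesting argument using $\tau(K)=K$ and finiteness of the order, and then Smith theory on the invariant sphere gives $S\cap A\cong S^0$, so $S-A$ is the essential annulus. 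If you want to salvage your route, you either need to prove the equivariant disk statement (e.g.\ by ruling out or reworking the disjoint-orbit output of the equivariant loop theorem), or switch to an invariant splitting sphere as in the paper, where the shared-boundary obstruction to disjoint translates is replaced by the nesting argument. A small further remark: the subcase $D\cap A=\varnothing$ in your conclusion cannot actually occur, since a finite-order diffeomorphism of a disk always has a fixed point, and the transversality of $D$ with $A$ at the single intersection point also deserves a word; but these are minor compared with the main gap.
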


\begin{proof}
Note that $n_K\le n$ for any component $K$ of $L$. We will prove that if $n_K<n$ for some component $K$, then the exterior of $L\cup A$ contains an essential annulus (see \cite[Section~9.4.7]{Mar23}). Then, by \cite[Proposition~9.4.18]{Mar23}, the exterior of $L\cup A$ is not hyperbolic.

We first consider the case when $L$ has only one component. Then, by \cite[Theorem~4]{Edm84}, $L$ bounds a $\rho$-invariant disk $D$ in $S^3$. By Corollary~\ref{thm:smith-disk}, $A\cap D$ is a single point. Observe that $D-(L\cup A)$ is an annulus, which is essential by \cite[Corollary~9.3.10]{Mar23}. Hence, $L\cup A$ is not a hyperbolic link.

Now we consider the case when $|L|\geq 2$. Suppose there exists a component $K$ of $L$ such that $n_K<n$. Let $\tau=\rho^{n_K}$. Since $L$ is an unlink, there exists a $2$-sphere $S\subset S^3$ such that $S^3-S$ is the union of two $3$-balls, one containing $K$ and the other containing $L-K$. In particular, $S$ does not bound a ball in the exterior $E_L$ of the link $L$. According to \cite[Theorem~3.1]{Dun85}, we can isotope $S$ in $E_L$ so that either $\tau(S)=S$ or $\tau(S)\cap S=\varnothing$. 

We claim that the former is the only possible case. Suppose by contradiction that the latter holds, and let $E_L-S=Y\sqcup Z$, where the boundary component corresponding to $K$ is contained in $Y$. Since $\tau(K)=K$, we have that either $\tau(Y)\subsetneq\int Y$ or $Y\subsetneq\int \tau(Y)$. In the first case, observe that it implies that $\tau^n(S)\subsetneq\int Y$, which is a contradiction since $\tau^n$ is the identity. The second case similarly leads to a contradiction. Therefore, we have that $\tau(S)=S$.

Since $S$ is $\tau$-invariant, it can be $\tau$-equivariantly decomposed into two disks. By Corollary~\ref{thm:smith-disk}, we have that $A\cap S\cong S^0$, and hence that $S\cap (S^3-(L\cup A))=S-A$ is an annulus, which is essential again by \cite[Corollary~9.3.10]{Mar23}. Therefore, $L\cup A$ is not a hyperbolic link.
\end{proof}

    \begin{figure}
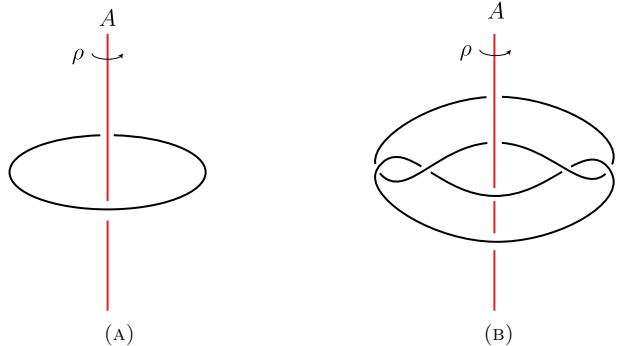
%
    \centering
    \subfloat[]
    {{\includesvg{./figures/LUA-Hopf.svg} }}%
    \qquad\qquad\qquad
    \subfloat[]
    {{\includesvg{./figures/LUA-Borromean.svg} }}%
    \caption{In both of cases (A) and (B), let $\rho$ be the $180^\circ$ rotation along the unknotted axis $A$ so that the order $n$ of $\rho$ is $2$. In (A), $L\cup A$ forms the Hopf link, which is not hyperbolic. In this case, $n_K = 1$. In (B), $L\cup A$ forms the Borromean ring, which is hyperbolic and $n=n_K$ for any component $K$ of $L$.}%
    \label{fig:LUA}%
\end{figure}


\section{Action of amphichiral maps on JSJ decompositions}
\label{sec:3}

Let $(L, f)$ be an \emph{amphichiral link}, i.e., a link $L\subset S^3$ together with an orientation-reversing self-diffeomorphism $f$ of $S^3$ which sends each component of $L$ to itself. Let $\G_L$ be the companionship graph of $L$. In this section, we observe how $f$ acts on $\G_L$ and prove several properties of this action. In particular, we prove that $(L,f)$ admits an amphichiral involution under certain conditions on $\G_L$ and the action of $f$. We define \textit{coherently} and \textit{incoherently} directed edges for a fixed edge in $\G_L$ which play key roles in the proof of Theorem~\ref{thm:link}. We also introduce notions of \textit{totally coherent} and \textit{properly incoherent} JSJ structure of a negative amphichiral knot exterior, which appear in Theorems~\ref{thm:SNACK} and \ref{thm:SNACK-concordance}.
  
Let $\mathcal{T}$ be the set of JSJ tori of $E_L$. Since $f(\mathcal{T)}=\{f(T)\}_{T\in\mathcal{T}}$ gives another minimal JSJ decomposition of $f(E_L)=E_L$ and such a minimal decomposition is unique up to isotopy, we can replace $f$ with an isotopic map so that $f(\mathcal{T})=\mathcal{T}$. Thus, $f$ acts on the companionship graph $\G_L$ in the following fashion:
  
\begin{prop}\label{prop:graph_properties}
Let $(L, f)$ be a non-split amphichiral link in $S^3$. Then the action of $f$ on the companionship graph $\G_L$ of $L$ satisfies the following:
\begin{enumerate}
\item $f$ acts by an automorphism on the underlying JSJ graph $G_L$.
    \item If a vertex $v$ is labeled by the link $L(v)$ then $f(v)$ is labeled by the mirror $\overline{L(v)}$.
    \item If $v$ is a vertex containing a component of $\partial E_L$ then $f(v)=v$.
    \item If $T$ is an edge in $\G_L$ connecting $v$ and $w$ and $f(T)=T$, then $f(v)=v$ and $f(w)=w$.
    \item If there are no fixed edges, then there exists precisely one fixed vertex, which contains all boundary components of $E_L$.
\end{enumerate}
\end{prop}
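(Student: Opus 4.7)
The plan is to treat the five assertions in order, using only the hypothesis that $f$ is an orientation-reversing self-diffeomorphism of $S^3$ preserving each component of $L$, together with the uniqueness (up to isotopy) of the minimal JSJ decomposition and the peripheral-preserving property of the untwisted re-embeddings from Lemma~\ref{lemma:untwisted re-embedding}. After isotoping $f$ so that $f(\mathcal{T}) = \mathcal{T}$, part (1) is immediate: $f$ permutes both the JSJ pieces and the JSJ tori, inducing an automorphism of $G_L$, and edge directions are preserved because $f$ is a self-diffeomorphism of $S^3$, so a solid-torus complementary region of $T$ containing $w$ is carried to a solid-torus complementary region of $f(T)$ containing $f(w)$. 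Part (3) is similarly immediate: since $f$ preserves each $L_a$ it fixes $\partial\nu L_a$ setwise, and the unique JSJ piece containing this boundary torus is therefore fixed.

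For part (2), the restriction of $f$ to $v$ is an orientation-reversing diffeomorphism onto $f(v)$; conjugating by the untwisted re-embeddings of $v$ and $f(v)$ yields an orientation-reversing diffeomorphism $E_{L(v)} \to E_{L(f(v))}$ that sends peripheral curves to peripheral curves. Tracking the effect of $f$ on the oriented preferred longitude of each $L_a$ with $a \in A(v)$ (using that $f$ reverses both the orientation of $L_a$ and the orientation of $S^3$), and invoking the pairwise sign ambiguity in the splice orientation convention of Definition~\ref{defn:companionship-graph} for components indexed by $\mathcal{E}(v)$, one concludes $L(f(v)) = \overline{L(v)}$.

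For part (4), suppose $f(T) = T$. The torus $T$ separates $S^3$ into two regions $S_1, S_2$; writing $\G_L - T = \G_1 \sqcup \G_2$ with $v \in \G_1$ and $w \in \G_2$, the associated $3$-manifolds satisfy $M(\G_i) \subset S_i$, and the complement of $M(\G_i)$ inside $S_i$ is a tubular neighborhood of the sublink $L_{\G_i}$ consisting of those components of $L$ whose boundary tori lie in $\G_i$. Since $L = L_{\G_1} \sqcup L_{\G_2}$ is nonempty, at least one of these sublinks, say $L_{\G_1}$, is nonempty; then $f(L_{\G_1}) = L_{\G_1} \subset S_1$ forces $f(S_1) = S_1$ and hence $f(S_2) = S_2$. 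Since $v$ is the only JSJ piece adjacent to $T$ on the $S_1$-side, $f(v) = v$, and symmetrically $f(w) = w$.

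For part (5), since $L$ is non-split the graph $\G_L$ is a finite tree and $f$ acts on it by a finite-order automorphism. A standard tree result (obtained for instance from the canonical center of a tree) yields either a fixed vertex or a setwise-fixed edge; by hypothesis the latter does not occur, so there exists a fixed vertex $v_*$. If two distinct fixed vertices $v_1 \neq v_2$ existed, the unique path between them would be $f$-invariant with both endpoints fixed, hence pointwise fixed, producing a fixed edge and contradicting the hypothesis; thus $v_*$ is unique. Combining this with part (3), every vertex containing a boundary component of $E_L$ must be fixed and therefore equal $v_*$. I expect the main obstacle to be the orientation bookkeeping in part (2); everything else is relatively routine once the correct setup is in place.
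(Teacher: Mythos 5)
Your proposal is correct and follows essentially the same route as the paper: the paper also declares (1)--(3) immediate from the definitions, proves (4) by using that $f$ preserves the link components (there via part (3) and a contradiction, in your case by pinning down the two sides of $T$ directly), and proves uniqueness in (5) by the same fixed-path-in-a-tree argument, with your tree-center argument for existence being a harmless substitute for simply invoking (3). One small caveat: in part (2) you assume $f$ reverses the orientation of every component $L_a$, which holds for negative amphichiral links but not for a general amphichiral link as in Section~\ref{sec:3}; this only affects the orientation bookkeeping on the mirror $\overline{L(v)}$ (which is in any case taken up to the pairwise ambiguity of Definition~\ref{defn:companionship-graph}), not the substance of the claim.
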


\begin{proof}
    (1)-(3) are clear from the definitions of $f$ and $\G_L$. For (4), we can write $$\G_L - T = \G_1\sqcup \G_2,$$ where $v\in \G_1$ and $w\in \G_2$. If $f(v)=w$ and $f(w)=v$, then $f(\G_1)=\G_2$ and $f(\G_2)=\G_1$. Then, there are no fixed vertices. However, since each boundary component of $E_L$ is fixed by $f$, by (3), there must exist at least one fixed vertex, which is a contradiction. 
    
    For (5), let $v$ be a vertex containing a boundary component of $E_L$. Then, from (3), $v$ is fixed by $f$. Suppose there were another fixed vertex $w$. Since $\G_L$ is a tree, there exists a unique path connecting $v$ and $w$. Moreover, such a path must be fixed by $f$ since $v$ and $w$ are fixed. This contradicts that $f$ fixes no edges of $\G_L$. Thus, $v$ is the unique fixed vertex. Therefore, again from (3), there cannot be other vertices containing a boundary component of $E_L$, i.e., $v$ contains all the boundary components.
\end{proof}

Note that since $f$ fixes the boundary component of $E_L$, if $L$ is split so that $\G_L$ has several components, then $f$ acts on each component separately. Thus, from now on, we only consider non-split amphichiral links and always assume that $f$ acts on the JSJ decomposition of $E_L$ for an amphichiral link and its companionship graph $\G_L$. 

Now we specify whether the amphichiral map $f$ of an oriented amphichiral link preserves or reverses the orientation of each component as follows.

\begin{defn}\label{def:(p,q)}
An oriented link $L = K_1 \cup \cdots \cup K_{p+q}$ is said to be \textit{$(p,q)$-amphichiral} if there exists an orientation-reversing map $f:S^3\to S^3$ such that 
$$f(K_i) = \begin{cases}
-K_i, & \text{for} \ 1\leq i\leq p, \\
+K_i, & \text{for} \ p+1\leq i\leq p+q.
\end{cases}$$
\noindent If such $f$ is an involution, we say that $L$ is \textit{strongly $(p,q)$-amphichiral}.
\end{defn}

\noindent We also use the same terminology for $(L, f)$ when we want to specify the amphichiral map $f$. By definition, an $n$-component negative amphichiral link $L$ is $(n, 0)$-amphichiral. We occasionally write $(-)$- (resp. $(+)$)-amphichiral to write negative (resp. positive) amphichiral for simplicity. For a $(p,q)$-amphichiral link $(L,f)$ indexed by $A$, we write $A = A^- \sqcup A^+$, where the sublinks $(L_{A^{\mp}}, f)$ are $(\mp)$-amphichiral, respectively.

\begin{defn}\label{def:reduced}
    Let $(L, f)$ be a $(p,q)$-amphichiral link in $S^3$. We say that $(L, f)$ is \textit{reduced} if the order of the action of $f$ on $\G_L$ and the order $[f|_v]$ in the mapping class group of the JSJ piece $v$ for each hyperbolic vertex $v$ of $\G_L$ are both a power of $2$.
\end{defn}

\begin{remark}\label{rmk:reduced}
Let $(L,f)$ be a $(p,q)$-amphichiral link. Observe that for every odd $n$, $f^n$ is an orientation-reversing map that $(L,f^n)$ is still $(p,q)$-amphichiral. Therefore, by replacing $f$ with $f^n$ for some odd $n$, we can always assume that $(L,f)$ is reduced. From now on, we will only consider this case.
\end{remark}

Let $\G \subset \G_L$ be a subgraph. By Lemma~\ref{lemma:untwisted re-embedding}, the corresponding $3$-manifold $M(\G)$ to $\G$ is diffeomorphic to the exterior of a link $L(\G) \subset S^3$. Suppose that $f(\G) = \G$. In particular, $f$ fixes $M(\G)$ in $S^3$. Then, by \cite[Corollary 7.3]{Liv23}, the restriction $f|_{M(\G)}$ extends via the untwisted re-embedding to a map $$F_\G : (S^3, L(\G)) \to (S^3, L(\G)),$$ since $f|_{M(\G)}$ preserves the peripheral structure of $L(\G)$. It is not difficult to check that if $f|_{M(\G)}$ is isotopic to a finite order map, then, up to isotopy, one can pick the extension $F_\G$ to be a map of the same order. For example, one may apply the same argument in \cite[Section~2]{Har80} for the case where $f|_{M(\G)}$ is an involution, to a general finite order case. (See also \cite[Lemma~1.3]{Har80b}.) Note that $L(\G)$ is not amphichiral in general since $F_\G$ does not preserve the components corresponding to not fixed edges in $\G$ by $f$.

Recall from Proposition~\ref{prop:graph_properties} (3) that if $f$ fixes only a single vertex $v$ of $\G_L$, then $v$ is the JSJ piece containing all boundary components. Consequently, the corresponding link $L(v)$ is indexed by $A\cup \mathcal{E}(v)$ where $A$ is the index set of $L$. Now we consider this case for certain $(p,q)$-amphichiral links. We prove the below lemma, which will be used later to obtain a $(p,q)$-amphichiral involution of a certain $(p,q)$-amphichiral link $L$. In particular, for the case where $p=1$, the involution $g$ with $\Fix(g)\cong S^0$ will be obtained for smaller pieces after edge-cuts of some $\G_L$, and be glued together along tori to get an amphichiral involution on the whole $E_L$, and hence on $(S^3, L)$ in Lemma~\ref{lemma:coherent_root}.

\begin{lem}\label{lemma:elementary-graph}
  Let $(L,f)$ be a reduced $(p, q)$-amphichiral link with $p\ge 1$, indexed by $A = A^- \sqcup A^+$. If $f$ fixes only a single vertex $v$ of $\G_L$ and the sublink $L(v)_{A^+\cup \mathcal{E}(v)}$ of $L(v)$ is unlinked, then there exists an involution $g$ on $S^3$ such that $(L,g)$ is strongly $(p,q)$-amphichiral. Moreover, if $p=1$, one can choose $g$ so that $\Fix(g)\cong S^0$.
\end{lem}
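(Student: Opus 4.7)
The plan is to build the involution $g$ in two stages: first produce an orientation-reversing involution $G$ on $(S^3,L(v))$ realizing the prescribed component action, and then extend $G$ to $(S^3,L)$ by equivariantly pairing up the remaining JSJ pieces of $E_L$.

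I would first record the combinatorial setup. Since $f$ fixes only $v$, Proposition~\ref{prop:graph_properties}(5) implies that $v$ contains every boundary component of $E_L$, so $L(v)$ is indexed by $A^-\sqcup A^+\sqcup \mathcal{E}(v)$. Furthermore, no edge in $\mathcal{E}(v)$ is fixed by $f$: if it were, Proposition~\ref{prop:graph_properties}(4) would give a second fixed vertex. Combined with the reducedness hypothesis, this forces each $f$-orbit on $\mathcal{E}(v)$ to have size a power of $2$ that is at least $2$. In particular, within each such orbit we may freely pair up the edges, and the $3$-submanifolds of $S^3$ spliced along paired edges are diffeomorphic, as they correspond to subtrees of $\G_L-v$ in the same $f$-orbit.

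To construct $G$, the key input is the unlink $U:=L(v)_{A^+\cup\mathcal{E}(v)}$. After ambient isotopy I would place $U$ in symmetric position with respect to a standard orientation-reversing involution $\sigma$ of $S^3$ that fixes each $A^+$ component of $U$ (setwise, with orientation preserved) and swaps the $\mathcal{E}(v)$ components of $U$ according to the chosen pairing. To extend $\sigma$ to an involution $G$ preserving all of $L(v)$ and reversing every $A^-$ component, I would split on the vertex type of $v$. If $v$ is Seifert fibered, Proposition~\ref{prop:seifert-fibered-link-exteriors} gives $L(v)$ as $S(p,q|X)$ or $H^n$, and the explicit geometry of these links yields an orientation-reversing involution with the prescribed action that can be arranged to agree with $\sigma$ on $U$. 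If $v$ is hyperbolic, Mostow--Prasad rigidity lets us take the extension $F_v$ of $f|_v$ as a finite-order isometry whose order is a power of $2$, and the unlinked hypothesis on $U$ allows us to locate, inside the orientation-reversing coset of the finite isometry group, an involution $G$ with the same component action as $F_v$. When $p=1$, the lone $A^-$ component may be positioned so that $G$ is orthogonal in the sense of Theorem~\ref{thm:smith-orthogonal}, yielding $\Fix(G)\cong S^0$ with both fixed points on that component.

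The extension of $G$ to the full link $L$ is then a gluing step: for each $G$-orbit $\{T,G(T)\}$ in $\mathcal{E}(v)$, the two corresponding spliced $3$-submanifolds of $S^3$ are diffeomorphic as noted above, so one extends $G|_{T\cup G(T)}$ to a diffeomorphism swapping them, producing an involution on the union. Since every orbit has size $2$, no further recursion is required, and the resulting $g$ is the desired strongly $(p,q)$-amphichiral involution of $(S^3,L)$, inheriting $\Fix(g)\cong S^0$ from $G$ when $p=1$. I expect the main obstacle to be the middle step: without the unlink condition, the map $F_v$ of $2$-power order need not be isotopic to any involution at all, and the delicate content lies in exploiting the standardness of $U$ together with the hyperbolic-or-Seifert-fibered structure of $v$ to reduce the order of $F_v$ to $2$ while preserving the prescribed action on components.
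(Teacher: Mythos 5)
Your outline (build an involution on the fixed vertex piece, then glue it to $f$-translates of the remaining JSJ pieces) matches the paper's strategy, but the proposal leaves unproved exactly the step that carries the content of the lemma. In the hyperbolic case you assert that ``the unlinked hypothesis on $U$ allows us to locate, inside the orientation-reversing coset of the finite isometry group, an involution $G$ with the same component action as $F_v$,'' and you yourself flag reducing the order of $F_v$ to $2$ as the main obstacle --- but you give no argument, and no such involution need exist a priori: the symmetry group could be cyclic of order $2^m$ generated by $F_v$, whose unique involution is orientation-preserving. The paper does not find a better involution; it proves that $m\ge 2$ is impossible. For $p\ge 2$ this follows from Smith theory (an orientation-reversing map fixing $2p\ge 4$ points has $\Fix\cong S^2$, hence order $2$, by Theorem~\ref{thm:smith-orthogonal}); for $p=1$ it requires Lemma~\ref{lemma:periodic_unlink} applied to the rotation $F_v^2$ about the unknotted axis $K_1^-$, followed by extending $F_v$ to a finite-order orientation-reversing map of the whole pair $(S^3,L)$ and concluding via Smith theory that $L$ would be the unknot, contradicting hyperbolicity of $v$. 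Similarly, the claim that for $p=1$ one ``may position'' things so that $\Fix(G)\cong S^0$ is unsupported: orientation-reversing involutions of $S^3$ have fixed set $S^0$ or $S^2$, and ruling out $S^2$ is a genuine argument (in the paper, an essential-annulus/Euler characteristic argument inside the hyperbolic piece, plus explicit constructions in the key-chain case such as the Hopf-link involution). Your Seifert fibered case also hides work: for $L(v)=S(r,s|X)$ an involution with the prescribed action generally does \emph{not} exist (cables are never amphichiral); the resolution is that the unlink hypothesis excludes this case entirely, and in the key-chain case forces strong constraints (e.g.\ $p=1$ when the core is reversed), which must be derived.

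A secondary but real issue is the gluing step. You propose to ``freely pair up the edges'' within each $f$-orbit and extend $G$ over the paired pieces by some diffeomorphism swapping them. The pieces attached to $T$ and $f^j(T)$ are diffeomorphic via $f^j$, but if $j$ is even this diffeomorphism is orientation-preserving, while the glued map must be orientation-reversing and must restrict on the boundary tori to the restriction of $G$; an orientation-reversing diffeomorphism between the pieces with the correct peripheral behavior need not exist. The pairing must be chosen compatibly with $f$ itself --- pair $T$ with $f(T)$ over a transversal of the orbits, and glue using $g|_{Y_T}=f|_{Y_T}$ and $g|_{f(Y_T)}=(f|_{Y_T})^{-1}$ after isotoping $G$ in a collar so that it agrees with $F_v$ (respectively $F_v^{-1}$) on the relevant boundary components, as in the paper's construction. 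As written, the proposal's pairing and extension claims do not go through without these corrections, and the hyperbolic order-reduction and fixed-point-set claims constitute genuine gaps.
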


\begin{proof}

Recall that $L(v)$ is indexed by $A\cup \mathcal{E}(v)$. We write $L(v)$ as: 
$$L(v) = K_1^- \cup \cdots \cup K_p^- \cup K_1^+ \cup \cdots \cup K_q^+ \cup K_1^e \cup \cdots \cup K_k^e,$$ where $K_i^-$, $K_i^+$, and $K_i^e$ are the components of the sublinks $L(v)_{A^-}$, $L(v)_{A^+}$, and $L(v)_{\mathcal{E}(v)}$, respectively. Let $F_v:(S^3,L(v))\to(S^3,L(v))$ be the map obtained from the restriction $f$ to $v$. Then the sublinks $(L(v)_{A^{\mp}}, F_v)$ are $(\mp)$-amphichiral, respectively.

Since $v$ is the unique fixed vertex of $\G_L$, $f$ fixes no edges in $\G_L$ by Proposition~\ref{prop:graph_properties} (3). In particular, this fact with the assumption for $(L,f)$ being reduced implies that $k=|\mathcal{E}(v)|$ is even. Thus, we can reindex $L(v)_{\mathcal{E}(v)}$ so that $F_v$ maps $K_i^e$ to $K_{i+k/2}^e$ for $1\leq i\leq k/2$. Note that $F_v(K_{i+k/2}^e)$ may not be $K_i^e$. Now we have two cases to consider: the JSJ piece $v$ is either Seifert fibered or hyperbolic.

\vspace{0.25em}
\noindent \textbf{Case 1.} $v$ is Seifert fibered.
\vspace{0.25em}

According to Proposition~\ref{prop:seifert-fibered-link-exteriors}, the link $L(v)$ is either a Seifert fibered link or a key-chain link.

\vspace{0.25em}
\noindent $\blacktriangleright$ \textbf{Case 1.a.} $L(v)$ is a Seifert fibered link $S(r, s|X)$.
\vspace{0.25em}

We prove that this case never occurs. Write $S(r,s|X)=S_1\cup S_2\cup S_3$ in form in Definition~\ref{def:SF-link}. If $\gcd(r,s)\neq 1$, then $S_1$ is a cable of another Seifert fibered link so that $v$ contains an incompressible torus not parallel to a component of $\partial v$. Thus, $\gcd(r,s)=1$. Note that
 \begin{itemize}
     \item $\lk(S_1, S_2) = r$,
     \item $\lk(S_1, S_3)=s$,
     \item $\lk(S_2, S_3)=1$.
     \end{itemize}
     
     Suppose $S_1$ is a component of $L(v)_{\mathcal{E}(v)}$. Since $|\mathcal{E}(v)|$ is even, $S_2\cup S_3\neq \varnothing$ and exactly one of $S_2$ or $S_3$ is another component of $\mathcal{E}(v)$. However, then $L(v)_{\mathcal{E}(v)}$ is not unlinked.
     
     Thus, $S_1$ is not indexed by $\mathcal{E}(v)$, but by $A$. Then, since $L(v)_A$ is amphichiral, the component $S_1$ should be the unknot, i.e., $r$ or $s$ is $1$. Assume $s=1$. Since $L(v)_{A^+\cup \mathcal{E}(v)}$ is unlinked and $|\mathcal{E}(v)|$ is even, the only possible case is $\mathcal{E}(v)=\varnothing$ and exactly one of $S_2$ or $S_3$ is a component of $L(v)_A$, while the other is empty.
     
     By symmetry, we may assume $S_2$ is nonempty. By a linking number argument again, $S_1$ and $S_2$ cannot be both in $L(v)_{A^-}$. By symmetry, we may assume that $S_1\subset L(v)_{A^-}$ and $S_2\subset L(v)_{A^+}$. Then, for any negative amphichiral knot $K$, we can obtain a negative amphichiral map of $K_{r,1}$. However, it is known that any cable knot is never amphichiral by \cite[Lemma~4.2]{PS20}, so it is impossible.

\vspace{0.25em}
\noindent $\blacktriangleright$ \textbf{Case 1.b.} $L(v)$ is a key-chain link $H^m$.
\vspace{0.25em}

Write $H^{m}=C\cup U_1 \cup\dots\cup U_m$, where $C$ is the core of the key-chain link and the $U_i$ are the rings. Note that $p+q+k = m + 1$. If $m>1$, then $C$ is the only component of $H^{m}$ of a non-trivial linking number with all the other components, so we necessarily have that $F_v(C)=\pm C$. Similarly, for $m=1$ we get that both components are invariant under $F_v$, since at least one of them is. Since $C$ is fixed by $F_v$, it is not indexed by $\mathcal{E}(v)$, i.e., $C$ is a component of $L(v)_A$. We separately consider the cases $F_v(C)=C$ or $F_v(C)=-C$. In each case, we obtain some constraints on $p, q$, or $k$. Then we provide an amphichiral involution in the possible case. Moreover, when $p=1$, we also provide such an involution with exactly two fixed points.

Suppose that $F_v(C)=+C$. In this case, we will show that $L(v)$ is the given link $L$ itself, and provide the desired involutions for $p\ge 2$ and $p=1$. Note that $q\ge 1$ and we reindex $L(v)_{A^+}$ as $C=K_1^+$. Since any other component of $L(v)$ has non-zero linking number with $C$, and $C$ is a component of the unlink $L(v)_{A^+\cup \mathcal{E}(v)}$, we have $$|A^+\cup \mathcal{E}(v)|=q+k = 1.$$ Hence, $k=\mathcal{E}(v)=0$ so that $\G_L=\{v\}$. Then $L=L(v)$ itself is the key-chain link $H^m$.

Now, for any $p\ge 1$, we can take the symmetry $g$ to be the reflection around the plane of the diagram in Figure~\ref{fig:key-chain}, which in particular contains the core $C$. Note that $\Fix(g)\cong S^2$. When $p=1$, our $L=L(v)$ is the Hopf link with a $(1, 1)$-amphichiral map. In this case, we take another $(1,1)$-amphichiral involution $g$ as depicted in Figure~\ref{fig:Hopf-link-symmetry}, with $\Fix(g)\cong S^0$.

\begin{figure}[h]
  \includesvg{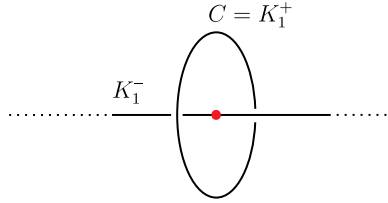}
  \caption{Note that in this Hopf link case, we may regard $C$ as a ring. The symmetry $g$ for the $(1,1)$-amphichiral Hopf link is given by the composition of the $\pi$-rotation around the central red point and the reflection around the plane of the diagram.}
    \label{fig:Hopf-link-symmetry}
\end{figure}

Now it remains to consider the case when $F_v(C) = -C$. In this case, we first show that the only possible $p$ is $1$, and we find an involution $G_v$ of $L(v)$. Then by using $G_v$ and the given amphichiral map $f$, we construct the desired amphichiral involution $g$ with $\Fix(g)\cong S^0$.

We reindex $L(v)_{A^-}$ as $C=K_1^-$. If $p>1$, then $K_2^-$ would be one of the rings of $H^m$ and we would have $$\lk(K_1^-,K_2^-)=-\lk(F_v(K_1^-),F_v(K_2^-))=-\lk(-K_1^-,-K_2^-)=-\lk(K_1^-,K_2^-),$$ and hence $\lk(K_1^-,K_2^-)=0$, which contradicts with $K_1^-=C$ being the core and $K_2^-$ being a ring. Therefore, $p=1$.

Observe now that we can find an orientation-reversing involution $G_v$ of $(S^3, L(v))$ with $\Fix(G_v)\cong S^0$, as illustrated in Figure~\ref{fig:keychain_symmetry}, such that 
\begin{itemize}
    \item $G_v(K_1^-)=-K_1^-$,
    \item $G_v(K_i^+)=+K_i^+$ for $i=1,\dots,q$, 
    \item $G_v(K_i^e)= K_{i+k/2}^e$ and $G_v(K_{i+k/2}^e)= K_{i}^e$ for $1\leq i\leq k/2$.
\end{itemize}

\begin{figure}[h]
  \includesvg{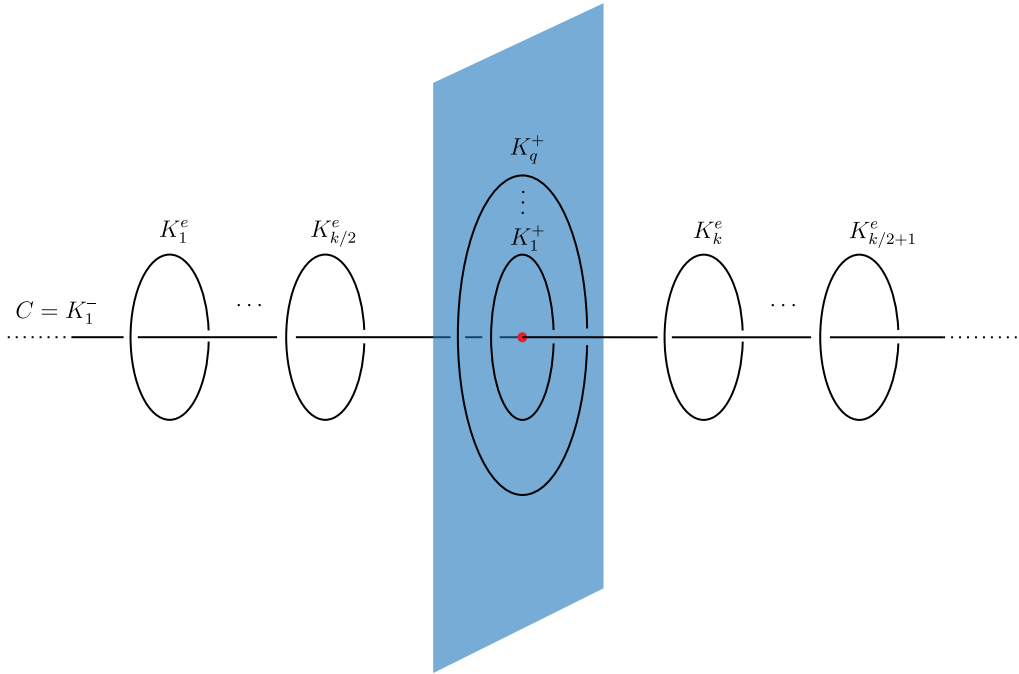}
  \caption{The symmetry $G_v$ is similarly defined with the map $g$ in Figure~\ref{fig:Hopf-link-symmetry}, but is used when $F_v(C)=-C$. The sublink $L(v)_{A^-}$ lies in the shaded plane which is vertical to $C=K_1^-=L(v)_{A^+}$. This $G_v$ fixes $K_1^-$, $K_1^+,\ldots, K_q^+$, and swaps $K_i^e$ and $K_{i+k/2}^e$.}
    \label{fig:keychain_symmetry}
\end{figure}

Now consider the edge-cuts of $\G_L$ along $\mathcal{E}(v)$ as $$\G_L-v= \bigsqcup_{i=1}^{k} \G_i,$$ where the connected components are labeled by $\G_i$ so that $M(\G_i)$ meets $v$ in the boundary component corresponding to $K_i^e$. For simplicity, write $M(\G_i)=Y_i$.

Let $i$ be an integer such that $1\le i \le k/2$. Recall that since $F_{v}(K_i^e)=K^e_{i+k/2}$, we have $f(Y_{i})=Y_{i+k/2}$.
Since $G_v$ acts in the same fashion as $F_v$ on $K_i^e$, we have that their restrictions to $\partial Y_i$ are isotopic.
Therefore, for $1\le i \le k/2$, we can isotope $G_v$ in a collar neighborhood of $\partial Y_{i}\subset Y_{i}$ so that it agrees with the restriction of $F_v$ to $\partial Y_i$. Similarly, since $G_v$ acts in the same way as $F_v^{-1}$ on $K_{i+k/2}^e$, we can isotope $G_v$ to make it agree with $F_v^{-1}$ on $\partial Y_{i+k/2}$.

Define now the map $g:(S^3,L)\to (S^3,L)$ as follows: for each $i$ such that $1\leq i \leq k/2$,
\begin{itemize}
    \item $g|_v=G_v:v\to v$,
    \item $g|_{Y_i}=f|_{Y_i} :Y_i\to Y_{i+k/2}$,
    \item $g|_{Y_{i+k/2}}=(f|_{Y_i})^{-1}:Y_{i+k/2}\to Y_i$.
\end{itemize} 
See Figure~\ref{fig:involution-construction} for a schematic picture of an example. By construction, $g$ is an orientation-reversing involution with $\Fix(g)=\Fix(G_v)\cong S^0$ on $(S^3,L)$, so $(L,g)$ is strongly $(p,q)$-amphichiral.

\begin{figure}[h]
  \includesvg{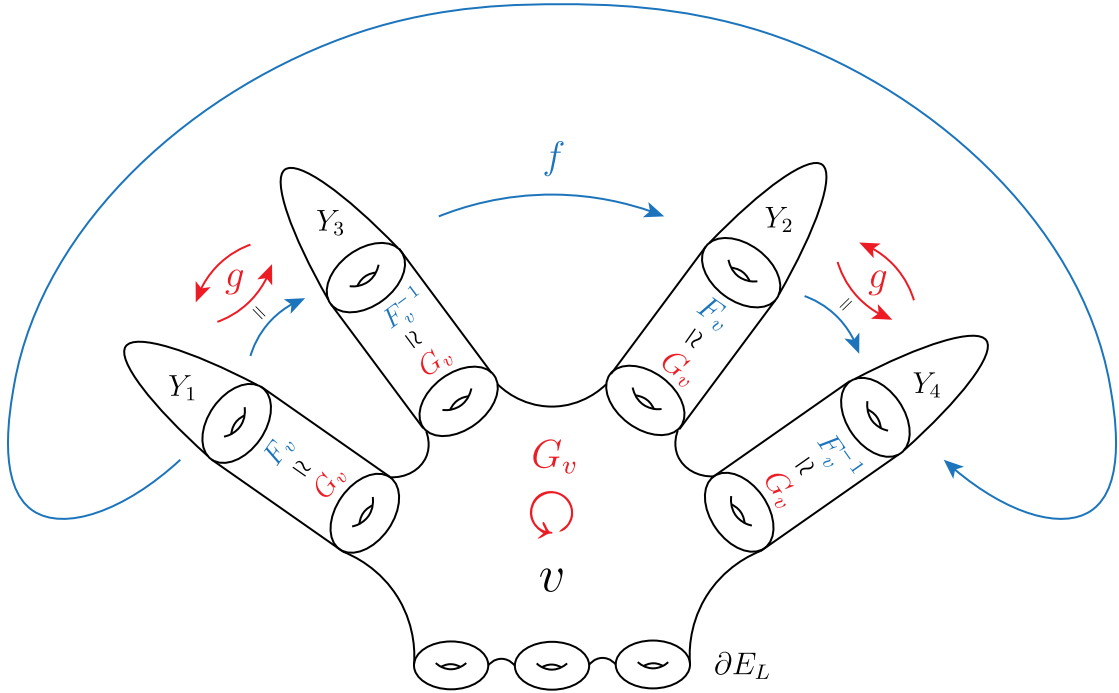}
  \caption{A schematic figure of the construction of the amphichiral involution $g$ for $L$ using the amphichiral map $f$ for $L$ and the involution $G_v$ on $v$ for $L(v)$. Note that $G_v$ is isotopic to $F_v$ and $F_v^{-1}$ on $\partial Y_i$ and $\partial Y_{i+k/2}$ for $1\le i \le k/2$, respectively, so that we can isotope $G_v$ through a collar neighborhood of $\partial Y_i$ to make it agree with $F_v$ or $F_v^{-1}$ on $\partial v$. Then we can glue $g|_{Y_i}$ to $G_v$ to obtain the amphichiral involution $g$ on $(S^3, L)$.}
    \label{fig:involution-construction}
\end{figure}

\vspace{0.25em}
\noindent \textbf{Case 2.} $v$ is hyperbolic.
\vspace{0.25em}

In this case, up to isotopy, we have that the restriction $f|_{v}$ is a hyperbolic isometry by Mostow’s rigidity theorem (for example, see \cite[Theorem~13.3.6]{Mar23}), and hence has finite order by \cite[Corollary~4.3.8]{Mar23}. Since $f$ is reduced, the order of $f|_{v}$, and hence, the order of $F_v$ is $2^m$ with $m\geq 1$. First, we consider the case where $m=1$. In this case, we will construct our involution in a similar way to \textbf{Case 1.b}, but without changing the cardinality of the orbits since $F_v$ is already order $2$. After that, we claim that the case where $m > 1$ cannot happen.

\vspace{0.25em}
\noindent $\blacktriangleright$ \textbf{Case 2.a.} $m=1$ and $p\geq 2$.
\vspace{0.25em}

Since $F_v$ is already an order $2$ map, we can take an orientation-reversing involution $G_v$ of $(S^3, L(v))$ just as $F_v$. Since $F_v^{-1}=F_v$, we can proceed in the same way as in \textbf{Case 1.b} to define an amphichiral involution $g:(S^3,L)\to (S^3,L)$.

\vspace{0.25em}
\noindent $\blacktriangleright$ \textbf{Case 2.b.} $m=1$ and $p=1$.
\vspace{0.25em}

As \textbf{Case 2.a}, since $F_v$ is already of order $2$, we can obtain the desired involution $g:(S^3,L)\rightarrow (S^3, L)$ using $F_v$. Furthermore, in this case, we prove that $\Fix(g)\cong S^0$.

Recall from Theorem~\ref{thm:smith-orthogonal} (2) that $\Fix(F_v)$ is diffeomorphic to either $S^2$ or $S^0$. We first argue that the former case is impossible. Suppose for a contradiction that $\Fix(F_v)\cong S^2$. Let $S^3 - \Fix (F_v) = B_1 \cup B_2$. 

It is clear from the assumption $p=1$ that $$|L(v)_{A^-}\cap \Fix(F_v)|=2.$$ Since $F_v(B_1)=B_2$ and $F_v$ preserves the orientation of $L(v)_{A^+}$, we have that $$L(v)_{A^+}\subset \Fix(F_v).$$ Moreover, since $F_v$ permutes every component of $L(v)_{\mathcal{E}(v)}$, we see that $$L(v)_{\mathcal{E}(v)}\cap \Fix(F_v)=\varnothing.$$

Now, since $\Fix (F_v) \cap E_{L(v)}$ is obtained by removing $q$ disjoint circles from a two-punctured sphere, by an Euler characteristic argument, it is not difficult to see that $\Fix (F_v) \cap E_{L(v)}$ contains either an essential annulus or a disk with the boundary given by a longitude of $K_i^-$ for some $1\leq i\leq q$ in $E_{L(v)}$, see \cite[Sections~9.3.1 and 9.4.7]{Mar23}. The latter case shows that the component $K_i^-$ is a split unknotted component; in particular, $v$ is not even irreducible. In either case, $v$ is not hyperbolic by \cite[Proposition~9.4.18]{Mar23}, which is a contradiction. Therefore, $\Fix(F_v)\cong S^0$. Thus, the same holds for $g$ by construction.

\vspace{0.25em}
\noindent $\blacktriangleright$ \textbf{Case 2.c.} $m\ge 2$ and $p\ge 2$.
\vspace{0.25em}

Recall that an orientation-reversing map on $S^1$ of finite order always has order $2$ and has exactly two fixed points. Since we assume that $p\geq 2$, the finite order map $F_v:(S^3,L(v))\to (S^3,L(v))$ would have at least $2p\ge 4$ fixed points: two on each $K_i^-$. By Theorem~\ref{thm:smith-orthogonal} (2), $\Fix(F_v)$ is diffeomorphic to $S^0$ or $S^2$. 
Since $\Fix(F_v)$ contains more than two points, the fixed point set would necessarily be a $2$-sphere, and hence the order $2^m$ of $F_v$ would be equal to $2$, which is a contradiction.

\vspace{0.25em}
\noindent $\blacktriangleright$ \textbf{Case 2.d.} $m\ge 2$ and $p = 1$.
\vspace{0.25em}

While we know $F_v$ is of finite order, $f$ may not be of finite order. We will construct a map $g$ of the same order as $F_v$ in a similar way to \textbf{Case 1.b}. While we have previously obtained an obvious involution $G_v$ and extended it to obtain $g$, we instead extend $F_v$ to get a finite order map $g$. Using this $g$, however, we show that the only possible $L$ with such a finite order amphichiral map $g$ is the unknot, which contradicts that $v$ is hyperbolic.

Notice that $F_v^2|_{K_1^-}=\id_{K_1^-}$ since $F_v|_{K_1^-}$ is an orientation-reversing map on $S^1$ of finite order. By Theorem~\ref{thm:smith-orthogonal} (1), $F_v^2$ is then a rotation of order $2^{m-1}$ along the \emph{unknotted} axis $K_1^-$. Since $L(v)$ is hyperbolic, by applying Lemma~\ref{lemma:periodic_unlink}, we get that the orbit of every component of $L(v)$ has cardinality $2^{m-1}$ under the action of $F_v^2$, and hence cardinality $2^{m}$ under the action of $F_v$. In particular, this implies that $K_1^-$ is the only component of $L(v)$ invariant under the action of $F_v$, i.e., $p=1$ and $q=0$.

Now we construct a finite order map $g:(S^3,L)\to (S^3,L)$ in a similar fashion to \textbf{Case 1.b}. Let $l$ be the number of orbits, namely $l=|\mathcal{E}(v)| / 2^m$. Let $\G_1,\dots, \G_l$ be representatives of the orbits of the action of $f$ on the connected components of $\G_L-v$, i.e., $$\G_L-v= \underset{i=1}{\overset{n}{\sqcup}} \operatorname{Orb}(\G_i), \quad \text{where} \ \operatorname{Orb}(\G_i)=\G_i\sqcup f(\G_i)\sqcup \dots \sqcup f^{2^m-1}(\G_i).$$

For simplicity, let $Z_i$ denote the corresponding $3$-manifold $M(\G_i)$. Define the map $g:(S^3,L)\to (S^3,L)$ as follows: for each $i$ such that $1\le i \le l$,
\begin{itemize}
    \item $g|_v=F_v:v\to v$,
    \item $g|_{f^j(Z_i)}=f|_{f^j(Z_i)} :f^j(Z_i)\to f^{j+1}(Z_i)$ for $0\leq j\leq 2^m-2$,
    \item $g|_{f^{2^m-1}(Z_i)}=(f^{2^m-1}|_{Z_i})^{-1}:f^{2^m-1}(Z_i)\to Z_i$.
\end{itemize} 

In contrast to the previous case, since we directly use $F_v$ obtained from $f$, it is not necessary to isotope our $g$ near the boundary components to make it agree with $f$ there. Without isotopy, observe that $g$ agrees with $F_v$ on $\partial (f^j(Z_i))$ for all $i$ and $j < 2^m-1$, and similarly with $(F_v^{2^m-1})^{-1}$ on $\partial (f^{2^m-1}(Z_i))$ by definition.

Now we have an orientation-reversing map $g$ of $(S^3, L)$ of order $2^m$ and get the orientation-preserving map $g^2$ of order $2^{m-1}> 1$. Then, by Theorem~\ref{thm:smith-orthogonal} (1), since $L\subset \Fix(g^2)\subsetneq S^3$, $L$ is the unknot, contradicting the assumption that $v$ is hyperbolic.
\end{proof}

Recall that if $T$ is fixed by $f$, then its end vertices $v$ and $w$ are also fixed as shown in Proposition~\ref{prop:graph_properties} (4). Then the induced maps $F_v$ and $F_w$ fix the components $L(v)_T$ and $L(w)_T$ of the corresponding links $L(v)$ and $L(w)$, respectively, indexed by this fixed edge $T\in \mathcal{E}(v)\cap \mathcal{E}(w)$. In this case, we can ask whether $F_v$ and $F_w$ preserve or reverse the orientation of the components $L(v)_T$ and $L(w)_T$, respectively. We will use $\epsilon_T^v,\epsilon_T^w\in\{\pm 1\}$ to denote it. The following lemma, which concerns this situation, can be easily proved by following \cite[Theorem~4.1]{Har80} and \cite[p. 466]{KW18}. For the convenience of the readers, we give a proof based on Definition~\ref{defn:companionship-graph}. 

\begin{lem}\label{lemma:extension}
  Let $(L,f)$ be an amphichiral link. If an edge $T$ of $\G_L$ is fixed by $f$, then $$F_v (L(v)_T) = \epsilon_T^v L(v)_T \quad \text{and} \quad F_w (L(w)_T) = \epsilon_T^w L(w)_T,$$ where $\epsilon_T^v \cdot \epsilon_T^w = -1$.
\end{lem}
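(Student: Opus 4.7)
The plan is to unpack the first half of the statement as a sign condition on the preferred longitudes under $F_v$ and $F_w$, transfer it back to the torus $T\subset S^3$ via the untwisted re-embeddings, and then compare the defining linking number $\lk(\widetilde{f_v^{-1}(\ell_v)},f_w^{-1}(\ell_w))=1$ with its image under $f$, using that $f$ reverses the orientation of $S^3$.

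First, since $T$ is fixed, Proposition~\ref{prop:graph_properties}~(4) gives $f(v)=v$ and $f(w)=w$, and $f|_v$ preserves the boundary component of $v$ corresponding to $T$. Under the untwisted re-embedding $f_v\colon v\hookrightarrow E_{L(v)}$, this boundary component becomes $\partial(\nu L(v)_T)$, so the extension $F_v$ preserves the unoriented knot $L(v)_T$ and consequently sends its oriented preferred longitude $\ell_v$ to $\epsilon_T^v\ell_v$ for a sign $\epsilon_T^v\in\{\pm1\}$. This is exactly $F_v(L(v)_T)=\epsilon_T^v L(v)_T$, and the analogous argument defines $\epsilon_T^w$ and gives $F_w(L(w)_T)=\epsilon_T^w L(w)_T$.

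Next, I would pull these relations back to the torus. Set $\alpha:=f_v^{-1}(\ell_v)\subset T$ and $\beta:=f_w^{-1}(\ell_w)\subset T$; conjugating by the re-embeddings yields $f(\alpha)=\epsilon_T^v\alpha$ and $f(\beta)=\epsilon_T^w\beta$ as oriented curves on $T$. Let $\widetilde{\alpha}$ be the parallel pushoff of $\alpha$ into $\int(w)$ from Definition~\ref{defn:companionship-graph}, so that $\lk(\widetilde{\alpha},\beta)=1$ by construction. Since $f(w)=w$, the image $f(\widetilde{\alpha})$ is a pushoff of $f(\alpha)=\epsilon_T^v\alpha$ into $\int(w)$, and hence is isotopic to $\epsilon_T^v\widetilde{\alpha}$ in $S^3\setminus\beta$.

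Finally, because $f$ is orientation-reversing on $S^3$, it negates all linking numbers, so
$$\epsilon_T^v\cdot\epsilon_T^w=\lk(\epsilon_T^v\widetilde{\alpha},\epsilon_T^w\beta)=\lk\bigl(f(\widetilde{\alpha}),f(\beta)\bigr)=-\lk(\widetilde{\alpha},\beta)=-1,$$
which completes the proof. The only subtlety is verifying that the pushoff direction is preserved by $f$, which is immediate from $f(v)=v$ and $f(w)=w$; after that, the claim reduces to the standard fact that orientation-reversing diffeomorphisms of $S^3$ flip linking numbers.
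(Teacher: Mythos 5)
Your proof is correct and follows essentially the same route as the paper: both hinge on the defining normalization of the orientations of $L(v)_T$ and $L(w)_T$ being incompatible with an orientation-reversing map that fixes both sides of $T$. The only cosmetic difference is that you keep the condition as a linking number in $S^3$ (using that $f$ negates linking numbers and preserves the pushoff side since $f(w)=w$), whereas the paper rewrites it as the signed intersection number $\lambda_v\cdot\lambda_w=1$ on $T$ and uses that $f$ reverses the orientation of $T$; these are the same computation.
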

\begin{proof}

  Recall that we oriented $L(v)_T$ and $L(w)_T$ so that the preimages $\lambda_v$ and $\lambda_w$ of their preferred longitudes under the untwisted re-embeddings have the linking number $1$ after perturbing one into the other side. This is equivalent to that we set them as the preimages on the torus $T$ have the signed intersection number $1$, namely, $$\lambda_v \cdot \lambda_w = 1 \text{ in } T.$$ Since $f$ reverses the orientation of $E_L$ and fixes both $v$ and $w$, $f$ should reverse the orientation of the torus $T$. If $F_v$ and $F_w$ both preserved the orientations of $L(v)_T$ and $L(w)_T$ respectively, then $$f(\lambda_v) \cdot f(\lambda_w) = -1 \text{ in }f(T),$$ which is against the definition. Therefore, exactly one of them has to reverse the orientation.
\end{proof}

Consider a satellite knot $P(K)$ and its companionship graph $\G_{P(K)}$. Let $L$ be the link consisting of the pattern $P$ and the meridian $U$ of the solid torus in which $P$ lies. Suppose that each of $E_K$ and $E_L = S^1\times D^2 - P$ has a single JSJ piece. Then $\G_{P(K)}$ consists of two vertices: $v$ labeled by $L$ and $w$ labeled by $K$.

Now suppose that $P(K)$ is negative amphichiral with respect to a map $f$. Then $f$ fixes the unique edge $T$, directed from $w$ to $v$. Then, by Lemma~\ref{lemma:extension} above, the following are equivalent:
\begin{itemize}
  \item $\epsilon_T^v = 1$,
  \item $\epsilon_T^w = -1$,
  \item $(K, F_w)$ is negative amphichiral.
\end{itemize}

In the sense that the companion $K$ of the negative amphichiral satellite $P(K)$ is also negative amphichiral when $\epsilon_T^v=1$, we will call this directed fixed edge $T$ \textit{coherently directed with respect to $f$}, and define it generally in the link case. The edge-cut along a coherently directed edge is described in Figure~\ref{fig:lemma5.3}.

\begin{defn}
  Let $(L, f)$ be a negative amphichiral link. For a fixed edge $T$ in $\G_L$, we say $T$ is \textit{coherently directed with respect to $f$} if $T$ is directed from $w$ to $v$, and $f$ reverses the orientation of the component $L(w)_T$ of $L(w)$, i.e., $$\epsilon_T^v=1.$$
\end{defn}

\begin{figure}[htbp]
\centering
\includesvg[width=0.7\columnwidth]{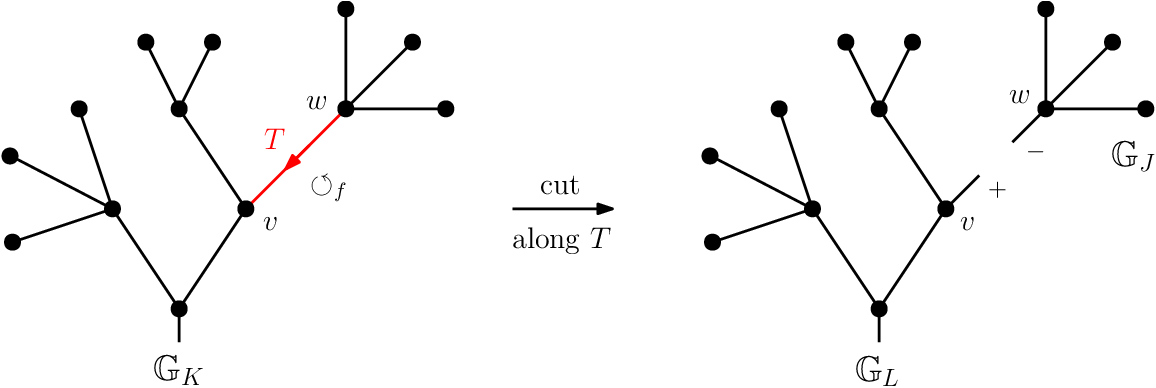}
\caption{The edge-cut along the coherently directed edge $T$ in $\G_K$ for a negative amphichiral $K$ into $\G_L$ and $\G_J$, where $L=P\cup U$ for $U$ the component corresponding to $T$. Then $K$ is the splice of $J$ and $L$, and in particular, $K=P(J)$. Since $T$ is coherently directed, the companion $J$ is again negative amphichiral.}
\label{fig:lemma5.3}
\end{figure} 

\noindent We drop \say{with respect to $f$} when it is clear in the context. We also say $T$ is \textit{incoherently directed} if $\epsilon_T^v = -1$. In the proof of Theorem~\ref{thm:link}, when we consider the case where $f$ fixes an edge, we divide it into subcases based on whether there is a coherently directed edge or not, and separately prove them.

For the remaining part of this section, we will only consider the case when $L$ is a negative amphichiral knot $(K,f)$. Recall that $\G_K$ is a \textit{rooted} tree when $K$ is a knot. Since every edge is directed in this case, a fixed edge is either coherently or incoherently directed, but not undirected. We define several notions on $\G_K$ combined with the action of $f$. These notions appear in the statements of Theorems~\ref{thm:SNACK} and \ref{thm:SNACK-concordance}, which will be proved in Section~\ref{sec:5}.

\begin{defn}
  Let $(K,f)$ be a negative amphichiral knot. Let $v$ be a vertex fixed by $f$. We say that $v$ is \emph{coherent} if every fixed edge $T$ incident to $v$ is coherently directed, i.e.,
\begin{itemize}
    \item $\epsilon_T^v=+1$ if $T$ is entering $v$,
    \item $\epsilon_T^v=-1$ if $T$ is exiting $v$.
\end{itemize}
\noindent Similarly, we say that $v$ is \emph{incoherent} if $v$ has at least one incident fixed edge, and every such edge is incoherently directed.
\end{defn}

Note that a vertex without an incident fixed edge is coherent. In addition, a vertex that has both incident coherently directed edges and incoherently directed edges is neither coherent nor incoherent. 

\begin{defn}
\label{defn:JSJstructure}
Let $(K,f)$ be a negative amphichiral knot. Let $\G_{max}$ be the unique maximal subtree of $\G_K$ taken from subtrees $\G$ which satisfy both of the following:
\begin{itemize}
    \item $\G$ contains the root,
    \item Every fixed vertex in $\G$ is coherent.
\end{itemize}
We call this $\G_{max}$ the \textit{maximal coherent subtree of} $\G_K$. Using $\G_{max}$, we define two notions.

\begin{itemize}
    \item $(K,f)$ has a \emph{totally coherent JSJ structure} if every vertex in $\G_K$ fixed by $f$ is coherent, i.e., $$\G_{max}=\G_K.$$
    \item $(K,f)$ has a \emph{properly incoherent JSJ structure} if the root of each $\G_i$ is incoherent root, where $$\G_K-\G_{max} = \bigsqcup_{i=1}^n \G_i.$$
\end{itemize}
When the context is clear, we simply call $\G_K$ totally coherent or properly incoherent. See Figure~\ref{fig:max-special} for an example.
\end{defn}

\begin{figure}[h]%
    \centering
    \includesvg[scale=1.5]{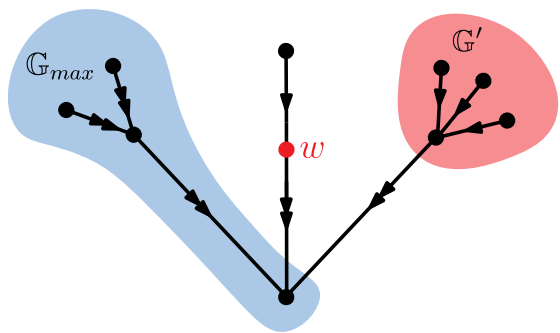}
    \caption{A companionship graph $\G_K$ of a negative amphichiral knot $K$. For simplicity, assume that every edge is fixed. We use double-headed arrows to denote coherently directed edges. The maximal coherent subtree $\G_{max}$ lies in the blue-shaded area. The root of the subgraph $\G'$ in the red-shaded area, as the companionship graph of another negative amphichiral knot, is incoherent. On the other hand, notice that the red-colored vertex $w$ is fixed, but neither coherent nor incoherent. In particular, $\G_{max}$ does not contain $w$. Notice that the whole graph $\G_K$ is properly incoherent.}%
    \label{fig:max-special}%
\end{figure}

Note that each of the above subgraphs $\G_i$ is the companionship graph of a nontrivial knot by Proposition~\ref{prop:many-edge-cut}, so the properly incoherent condition is well-defined. Now we provide several properties directly obtained from definitions.

\begin{prop}
\label{prop:coherent-properties}
Let $(K,f)$ be a negative amphichiral knot. Then the following hold:
\begin{enumerate}
    \item If $\G_K$ has no fixed edge, then $\G_K$ is totally coherent.
    \item $f(\G_{max})=\G_{max}$.
    \item If the root is not coherent, then $\G_{max}=\varnothing$.
    \item If $\G_{max}=\varnothing$, then $\G_K$ is properly incoherent if and only if the root of $\G_K$ is incoherent.
\end{enumerate}
\end{prop}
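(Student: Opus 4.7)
The plan is to handle the four parts in order, by directly unpacking Definition~\ref{defn:JSJstructure} and exploiting the basic properties of the action of $f$ from Proposition~\ref{prop:graph_properties}. A preliminary remark: since $\G_K$ is a finite tree and the root is fixed by $f$ by Proposition~\ref{prop:graph_properties}~(3), the family of candidate subtrees appearing in the definition of $\G_{max}$ (i.e.\ subtrees containing the root in which every fixed vertex is coherent) is closed under finite unions, and hence admits a unique maximum (possibly $\varnothing$). Note also that the property of being \emph{coherent} is an intrinsic property of a fixed vertex $v \in \G_K$, depending only on the incident fixed edges and the corresponding signs $\epsilon^v_T$, not on any containing subtree.

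Part (1) is immediate: if $\G_K$ has no fixed edge, then for every fixed vertex the condition ``every incident fixed edge is coherently directed'' holds vacuously, so every fixed vertex of $\G_K$ is coherent. Hence $\G_K$ itself is a candidate, which forces $\G_{max} = \G_K$. For part (2), $f(\G_{max})$ is again a subtree of $\G_K$ since $f$ is a graph automorphism by Proposition~\ref{prop:graph_properties}~(1), and contains $f(\text{root}) = \text{root}$. If $v \in f(\G_{max})$ is fixed by $f$, then $v = f^{-1}(v) \in \G_{max}$, so $v$ is coherent by the defining property of $\G_{max}$. Thus $f(\G_{max})$ is also a candidate, so by maximality $f(\G_{max}) \subseteq \G_{max}$; applying the same argument to $f^{-1}$ yields the reverse inclusion, and hence $f(\G_{max}) = \G_{max}$.

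For part (3), assume the root is not coherent. Since the root is always $f$-fixed, every non-empty candidate subtree would contain the root as a non-coherent fixed vertex, violating the coherence condition. Therefore the only candidate is $\varnothing$, so $\G_{max} = \varnothing$. Finally, for part (4), when $\G_{max} = \varnothing$ the decomposition $\G_K - \G_{max} = \bigsqcup_i \G_i$ degenerates to the single piece $\G_1 = \G_K$, using that $\G_K$ is connected because $K$ is a knot. The root of this unique piece is precisely the root of $\G_K$, so the condition ``the root of each $\G_i$ is incoherent'' reduces to ``the root of $\G_K$ is incoherent'', giving the desired equivalence.

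None of the four parts presents a serious obstacle; the only point that warrants a little attention is the well-definedness of $\G_{max}$, addressed in the preliminary remark, and the observation in part (2) that the coherence of a fixed vertex does not depend on the ambient subtree. Everything else is a routine unfolding of the definitions together with Proposition~\ref{prop:graph_properties}.
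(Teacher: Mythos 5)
Your proposal is correct: the paper treats all four parts as immediate consequences of Definition~\ref{defn:JSJstructure} and Proposition~\ref{prop:graph_properties} and gives no separate argument, and your unfolding (closure of the candidate family under unions, coherence being intrinsic to a fixed vertex, $f$ acting by automorphisms fixing the root, and the degenerate decomposition when $\G_{max}=\varnothing$) is exactly that intended reasoning. The only cosmetic point is in part (3), where "the only candidate is $\varnothing$" really means the candidate family is empty and $\G_{max}=\varnothing$ by convention, which is precisely the convention the paper itself adopts.
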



\section{Proof of Theorem~\ref{thm:link}}
\label{sec:4}

Let $(L, f)$ be a negative amphichiral link. In this section, we first recall the notions of concordance and sliceness of links in $S^3$ in a generalized setting. Then we obtain concordance results of spliced links in that setting, provided a concordance of the left-hand side, or the right-hand side of the bow tie notation, respectively. Then we consider the concordance of strongly negative amphichiral links with more than one component. Finally, we prove Theorem~\ref{thm:link}. Our argument is based on the action of $f$ on the companionship graph $\G_L$ described in Section~\ref{sec:3}.

\begin{defn}\label{def:concordance}
Let $W^4$ be a closed connected $4$-manifold, and let $L=K_1\cup\dots\cup K_n$ and $L'=K'_1\cup\dots\cup K'_n$  be $n$-component links in $S^3$. Let $B_0$ and $B_1$ be disjoint small $4$-balls in $W$. We say that $L$ and $L'$ are \emph{concordant in $W$} if there exists disjoint cylinders $C_i$ smoothly and properly embedded in $W-(B_0\cup B_1)$ for $i=1, \ldots, n$ such that 
\begin{itemize}
    \item each $C_i$ is diffeomorphic to $S^1 \times [0,1]$,
    \item $\partial (W-(B_0\cup B_1),C_i)= -(\partial B_0,K_i) \cup (\partial B_1, K_i')$.
\end{itemize}
When the cylinders $\mathcal{C}=C_1\cup\cdots \cup C_n$ are specified, we call $\mathcal{C}$ a \textit{concordance between $L$ and $L'$ in $W$}, and we say $L$ and $L'$ are concordant in $W$ \textit{through $\mathcal{C}$}. We also say that $L$ and $L'$ are \emph{concordant} (resp. \emph{rationally concordant}) if they are concordant in $S^4$ (resp. in a $\Q HS^4$).
\end{defn}

\begin{defn}
An $n$-component link $L$ in $S^3$ is said to be \emph{slice in $W^4$} if $L\subset S^3=\partial (W-B^4)$ bounds disjoint $n$ disks smoothly and properly embedded in $W-B^4$. Equivalently, $L$ is slice in $W$ if and only if $L$ is concordant to the $n$-component unlink in $W$.
We say that $L$ is \emph{slice} (resp. \emph{rationally slice}) if it is slice in $S^4$ (resp. in a $\Q HS^4$).
\end{defn}

\begin{remark}
If a $4$-manifold $W$ has $\partial W=S^3$, we also say that a link $L\subset S^3$ is slice in $W$ if it is slice in $W\cup_{\partial W} B^4$.
\end{remark}

Consider a satellite knot $P(K)$ with the companion $K$ and the pattern $P$. Suppose $K$ is concordant to another knot $K'$ through the cylinder $C\cong S^1\times I$. Regarding $S^3$ as $S^1\times D^2\cup (S^3-K)$ and $P(K)$ as the image of $P\subset S^1\times D^2$, we obtain a concordance $P(C)$ between $P(K)$ and $P(K')$ as the image of $P\times I \subset S^1\times D^2 \times I$ by viewing $S^3\times I$ as $ S^1\times D^2\times I\cup (S^3\times I - C)$.

Recall that the satellite operation is a special case of the splicing operation. For example, we can view $P(K)$ as $K\bowtie (P\cup U)$ where $U$ is the meridian of the solid torus in which $P$ lies. Then $K\bowtie (P\cup U)$ is concordant to $K'\bowtie (P\cup U)$. In the lemma below, we prove a similar result for the splicing operation in a general $4$-manifold. We also consider the case where a concordance on the right-hand side of the bow tie notation is provided under more specific conditions. This lemma will be essentially used to get (rational) concordances of spliced links.

\begin{lem}\label{lemma:splice_concordance}
Let $(L_1,\dots,L_n)\underset{K_i\sim U_i}{\bowtie}L$ be the spliced link. Then the following hold:
\begin{enumerate}
  \item If $L_i$ is concordant to $L_i'$ in $W_i^4$ through $\mathcal{C}_i$ such that $C_i\cap L_i = K_i$ for each cylinder $C_i\subset\mathcal{C}_i$, then $$(L_1,\dots,L_n)\underset{K_i\sim U_i}{\bowtie}L \text{ is concordant to }(L_1',\dots,L_n')\underset{K_i'\sim U_i}{\bowtie}L\text{ in } \underset{i=1}{\overset{n}{\#}} W_i,$$ where $K_i'= C_i \cap L_i'$.
  \item If $L$ and $L'$ are concordant through $\mathcal{C}=C_1\cup\cdots \cup C_m$ with $C_i\cap L = U_i$ for $i=1,\ldots,n\le m$ such that $$S^3\times I-(C_1\cup\dots\cup C_n)\cong (S^3-(U_1\cup\dots\cup U_n))\times I,$$ then $$(L_1,\dots,L_n)\underset{K_i\sim U_i}{\bowtie}L\text{ is concordant to }(L_1,\dots,L_n)\underset{K_i\sim U_i'}{\bowtie}L',$$ where $U_i' = C_i\cap L'$.
\end{enumerate}
\end{lem}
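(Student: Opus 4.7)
The plan is to construct both concordances via a $4$-dimensional analog of the splicing operation, gluing concordance-cylinder complements along product neighborhoods of boundary tori.

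For Part~(1), let $C_i\in\mathcal{C}_i$ denote the cylinder with $C_i\cap L_i=K_i$. Set $X_i=W_i-\nu C_i$, so that $\partial X_i$ contains a distinguished piece $\partial(\nu C_i)\cong T^2\times I$, and set $Y=(S^3\times I)-\nu((U_1\cup\dots\cup U_n)\times I)$, whose boundary contains $n$ pieces of the form $T^2\times I$, one for each $U_i$. I would form the ambient $4$-manifold
\[
V \ =\ \Big(\bigsqcup_{i=1}^n X_i\Big)\ \cup_{\sqcup(\varphi_i\times\id_I)}\ Y,
\]
where each $\varphi_i\times\id_I$ extends the splicing diffeomorphism $\varphi_i$ from Definition~\ref{defn:splice} to the $T^2\times I$ boundaries. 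By construction, $\partial V$ consists of two copies of $S^3$: the bottom recovers $(L_1,\dots,L_n)\underset{K_i\sim U_i}{\bowtie} L$ and the top recovers $(L_1',\dots,L_n')\underset{K_i'\sim U_i}{\bowtie} L$. The candidate concordance cylinders are $(\mathcal{C}_i-\{C_i\})\subset X_i$ together with the product cylinders $(L-(U_1\cup\dots\cup U_n))\times I\subset Y$; these are disjoint copies of $S^1\times I$ that connect the two spliced links componentwise.

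The second step is to identify $V$ with the standard ambient cobordism for a concordance in $\#_{i=1}^n W_i$. Since each $\nu C_i$ can be isotoped into a $4$-ball in $W_i$, removing $\nu C_i$ and gluing back part of $Y$ via $\varphi_i\times\id_I$ is, up to isotopy, the local operation of performing an interior connect sum of the product cobordism $S^3\times I$ with $W_i$ along a small ball. Doing this for each $i$ and using $S^4\#(\#_i W_i)\cong\#_i W_i$ together with $S^3\times I\cong S^4-(B_0\sqcup B_1)$, one gets $V\cong(\#_{i=1}^n W_i)-(B_0\sqcup B_1)$, which is exactly the ambient manifold for a concordance in $\#_i W_i$. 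For Part~(2), the product structure hypothesis $S^3\times I-(C_1\cup\dots\cup C_n)\cong(S^3-(U_1\cup\dots\cup U_n))\times I$ furnishes tubular neighborhoods $\nu C_i\cong\nu U_i\times I$ compatible with the product, so I would splice in product concordances of the $K_i$'s directly via
\[
V' \ =\ \Big(\bigsqcup_{i=1}^n (S^3-\nu K_i)\times I\Big)\ \cup_{\sqcup(\varphi_i\times\id_I)}\ \Big((S^3\times I)-\nu(C_1\cup\dots\cup C_n)\Big).
\]
At each time $t\in I$, this recovers the spliced link $(L_1,\dots,L_n)\underset{K_i\sim U_i}{\bowtie} L$ at $t=0$ and $(L_1,\dots,L_n)\underset{K_i\sim U_i'}{\bowtie} L'$ at $t=1$. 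The ambient manifold is diffeomorphic to $S^3\times I$, and the concordance cylinders are the product cylinders of each $L_i-K_i$ together with the cylinders $C_{n+1},\dots,C_m$ of $\mathcal{C}$, which by hypothesis all lie in the product region.

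The main obstacle I anticipate is in Part~(1): formalizing the identification of $V$ as a concordance cobordism in $\#_{i=1}^n W_i$. This requires an isotopy argument placing each $C_i$ inside a $4$-ball of $W_i$, plus verifying in the model case $W_i=S^4$ that $X_i\cup_{\varphi_i\times\id_I} ((S^3\times I)-\nu(U_i\times I))\cong S^3\times I$ with the expected boundary links; the general case then follows by an interior connect-sum argument. Once this identification is established, the assembly of the cylinders into an honest concordance and the verification of Part~(2) are more or less routine.
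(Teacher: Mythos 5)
Your construction of the ambient manifold in Part (1) and the choice of candidate cylinders coincide with the paper's, and your Part (2) is essentially the paper's argument and is fine. The genuine gap is in the identification step of Part (1), which you yourself flag as unresolved and which you propose to settle by an argument that cannot work as stated: the claim that ``each $\nu C_i$ can be isotoped into a $4$-ball in $W_i$'' is false, since $C_i$ is a properly embedded annulus whose two boundary circles lie on the two boundary spheres $\partial B_0$ and $\partial B_1$ of $W_i-(B_0\cup B_1)$; no such annulus lies in a ball, regardless of whether $W_i=S^4$ or the concordance is a product. Relatedly, the regluing is performed along a $T^2\times I$, so it is not ``the local operation of performing an interior connect sum along a small ball,'' and your reduction of the general case to the model case $W_i=S^4$ has no justification without that false isotopy claim.

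The identification can instead be made directly, as the paper does, and your ``model case'' observation is in fact the right mechanism for \emph{every} $W_i$: the portion of $Y$ glued to $Z_i-\nu C_i$ (where $Z_i=W_i-(B_0\cup B_1)$) is $(S^3-U_i)\times I\cong S^1\times D^2\times I$, and the levelwise extension of the splicing map $\varphi_i$ (which exchanges meridian and longitude) attaches it exactly as the normal neighborhood $\nu C_i$, so this union is just $Z_i$ back again. What remains of $Y$ after splitting off these thickened solid tori is a collection of $B^3\times I$ pieces, which realize a levelwise connected sum of the $Z_i$'s; capping the top and bottom of the result with two $4$-balls, each viewed as a boundary connected sum $\natural_i B_i^{\pm}$, yields $\#_{i=1}^n W_i$, so the total manifold is $\#_i W_i$ minus two balls, as required. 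With this replacement for your isotopy/connect-sum step (and noting that the boundary gluing should be taken levelwise so that it restricts to $\varphi_i$ at $t=0$ and to the corresponding map $\varphi_i'$ at $t=1$, rather than being literally $\varphi_i\times\id_I$), your argument becomes the paper's proof.
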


\begin{proof}
    (1) Note that the sublink $L-(U_1\cup \cdots \cup U_n)$ is contained in $\#^n (S^1\times D^2)\cong S^3 - (U_1\cup \cdots \cup U_n)$. Let $Z_i$ be two punctured $W_i$, namely $Z_i = W_i - (B_0 \cup B_1)$, and consider the complement of $C_i$ in $Z_i$. Note that the component $C_i$ of $\mathcal{C}_i$ is a concordance between $K_i$ and $K_i'$ in $W_i$.

  The gluing maps $\varphi_i$ and $\varphi_i'$ to obtain the splices extend to the gluing map $\psi_i$, which levelwisely glue $Z_i - C_i$ to $S^3-(U_1\cup\cdots\cup U_n)$ along $T^2\times I$. Note that $$((S^3-U_i)\times I)\cup_{\psi_i}(Z_i-C_i)\cong Z_i,$$ since it is just gluing $\nu(C_i)\cong S^1\times D^2\times I$ back in $Z_i$. Let $\Psi$ be the disjoint union of $\psi_i$. Now, we construct the ambient manifold $Z$ as: \begin{align*}Z &= \left( (S^3 - (U_1\cup \cdots \cup U_n))\times I \right) \cup_{\Psi} \left( \sqcup_i (Z_i-C_i) \right)\\
  &\cong \left(  (\#^n (S^1\times D^2))\times I \right) \cup_{\Psi} \left( \sqcup_i(Z_i-C_i) \right).\end{align*}
  
\noindent Note that $Z$ is the resulting manifold obtained by \textit{levelwise connected sum} of $Z_i$ along $B^3\times I\subset (S^1\times D^2)\times I$. More precisely, letting $S_{i,r}$ and $S_{i+1,l}$ be the $2$-spheres identified when $S^3-U_i$ and $S^3-U_{i+1}$ are joined by connected sum, the levelwise connected sum identifies $S_{i,r}\times I$ and $S_{i+1,l}\times I$ levelwisely.
  
  Now we cap $Z$ off by two $4$-balls $B^+$ and $B^-$. Regard $B^{\pm}$ as the boundary sums $\natural_i B^{\pm}_i$ and the identified $3$-balls $D^{\pm}_{i,r}$ and $D^{\pm}_{i+1,l}$ as the caps of $S_{i,r}\times I$ and $S_{i+1,l}\times I$, respectively. Then the gluings are done along $( D^+_{i,r}\cup S_{i,r} ) \times ( I \cup D^-_{i,r} )\cong S^3$ and $(D^+_{i+1,l}\cup S_{i+1,l} )\times (I\cup D^-_{i+1,l})\cong S^3$. Thus, the capped off manifold is $$Z\cup B^- \cup B^+ = \underset{i=1}{\overset{n}{\#}} W_i.$$ Therefore, the cylinders $(L-(U_1\cup \cdots \cup U_n))\times I$ in $(S^3-(U_1\cup\cdots\cup U_n))\times I$ and $\mathcal{C}_i - C_i$ in $Z_i - C_i$ form the desired concordance in $\#_{i=1}^n W_i$.

  (2) We form the ambient manifold as $$\left( (S^3 - (U_1\cup \cdots \cup U_n))\times I \right) \cup_{\Psi} \left( \sqcup_i ((S^3-K_i)\times I) \right) \cong S^3\times I.$$ Since $(S^3-(U_1\cup\cdots\cup U_n))\times I\cong S^3 \times I - (C_1\cup \cdots \cup C_n)$, the cylinders $\mathcal{C}-(C_1\cup\cdots \cup C_n)$ and $(L_i-K_i)\times I$ form the desired concordance.
\end{proof}

Now we provide two lemmas on concordance of strongly negative amphichiral links. While every strongly negative amphichiral knot is rationally slice \cite{Kaw09}, it is not slice in $B^4$ in general. On the other hand, we prove that every strongly negative amphichiral \emph{link} with more than one component is slice in $B^4$.

\begin{lem}\label{lemma:str}
  Let $(L,f)$ be a strongly negative amphichiral link. If $|L|>1$, then $L$ is slice.
\end{lem}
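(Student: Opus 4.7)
The plan is to use Smith theory to constrain the fixed set $\Fix(f)$ to a $2$-sphere under the hypothesis $|L|>1$, and then to explicitly construct a disjoint family of slice disks for $L$ in $B^4$ using the arcs in which $\Fix(f)$ cuts each component of $L$.

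First I would rule out the case $\Fix(f)\cong S^0$. By Theorem~\ref{thm:smith-orthogonal}~(2), $\Fix(f)$ is either $S^0$ or $S^2$. For each component $K_i$, the restriction $f|_{K_i}$ is an orientation-reversing involution of $S^1$, so it has exactly two fixed points, which necessarily lie in $\Fix(f)$. If $\Fix(f)$ consisted of only two points $\{p,q\}$, then every component of $L$ would contain both $p$ and $q$, which is impossible for two or more pairwise disjoint components. Hence $\Fix(f)\cong S^2$; set $\Sigma:=\Fix(f)$. It separates $S^3$ into two $3$-balls $B_+$ and $B_-$ that are interchanged by $f$, and each component decomposes as $K_i=\alpha_i\cup\bar\alpha_i$ with $\alpha_i:=K_i\cap B_+$, $\bar\alpha_i:=K_i\cap B_-=f(\alpha_i)$, sharing the two fixed points of $f|_{K_i}$ on $\Sigma$ as common endpoints.

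Next I would construct the disks directly. Realize $B^4$ as the product $B_+\times[0,1]$, which is a smooth $4$-ball after rounding corners, and identify its boundary with the ambient $S^3=B_+\cup_\Sigma B_-$ via the identity on $B_+\times\{1\}\to B_+$, the involution $f$ on $B_+\times\{0\}\to B_-$, and a collar identification on $\Sigma\times[0,1]$. Under this identification, the rectangle
\[
D_i\,:=\,\alpha_i\times[0,1]\ \subset\ B_+\times[0,1]=B^4
\]
is a properly embedded disk whose boundary consists of $\alpha_i\times\{1\}=\alpha_i\subset B_+$, the arc $\alpha_i\times\{0\}$ identified with $f(\alpha_i)=\bar\alpha_i\subset B_-$, and the two collar arcs $\partial\alpha_i\times[0,1]\subset\Sigma\times[0,1]$; after smoothing, this traces out exactly the component $K_i$. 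Since the arcs $\alpha_i$ are pairwise disjoint in $B_+$, the disks $D_i$ are pairwise disjoint in $B^4$, giving a slicing of $L$.

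The main obstacle I expect is the careful bookkeeping around the corner smoothing of $B_+\times[0,1]$ and the verification that the boundary of each $D_i$ in the resulting smooth $\partial B^4\cong S^3$ recovers the component $K_i$ together with the correct identification of $\bar\alpha_i$ through $f$; once this geometric setup is nailed down, the construction itself is essentially free.
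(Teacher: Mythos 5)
Your proposal is correct and follows essentially the same route as the paper: Smith theory forces $\Fix(f)\cong S^2$ once $|L|>1$ (each component carrying two fixed points), and the slice disks are then obtained by viewing $(S^3,L)$ as the boundary of the product of one half-ball tangle with an interval, i.e., $\partial\bigl((B^3,T_1)\times I\bigr)$. Your version merely spells out the corner-rounding and boundary identification more explicitly than the paper does.
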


\begin{proof}

By Theorem~\ref{thm:smith-hom-sphere}, the restriction of $f$ to each component of $L$ has two fixed points. Since $$\Fix(f) = S^0 \ \text{or} \ S^2 \quad \text{and} \quad \Fix(f) \supset \bigcup_{i=1}^{|L|} \Fix(f|_{L_i}) \supsetneq S^0 ,$$ $\mathrm{Fix}(f)$ is always a $2$-sphere whenever $|L|>1$.

This $2$-sphere $\Fix(f)$ splits $L$ into the union of two tangles $T_1$ and $T_2$ with $2|L|$ endpoints. For example, see Figure~\ref{fig:str-neg-link}. Since $f(T_1)=T_2=\overline{T_1}$, regarding $(S^3, L) = \partial ((B^3, T_1)\times I)$, it is easily see that $L$ is a slice link in $B^4$.
\end{proof}

\begin{figure}[h]
  \includesvg{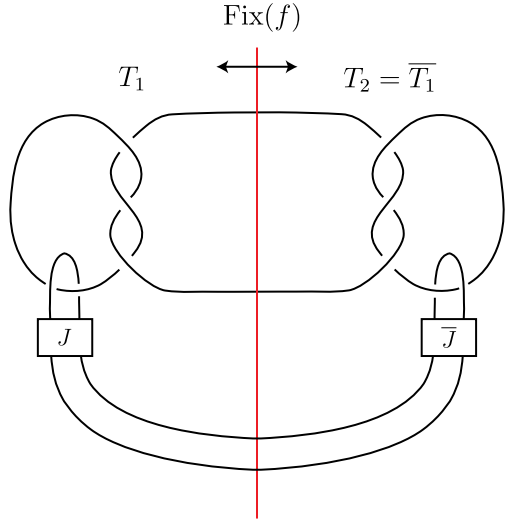}
  \caption{A strongly negative amphichiral link with two components.}
\label{fig:str-neg-link}
\end{figure}

\noindent Combined with \cite{Kaw09}, every strongly negative amphichiral link is rationally slice. In the proof of Theorem~\ref{thm:link}, we prove that every negative amphichiral link in the base case of our induction is strongly negative amphichiral. The lemma above completes the proof of the case.

Furthermore, we consider a more specific concordance when the given strongly amphichiral link has an unlinked sublink. This concordance is particularly necessary to apply Lemma~\ref{lemma:splice_concordance} (2).

\begin{lem}\label{lemma:unknotted_concordance}
Let $(L,f)$ be a strongly negative amphichiral link indexed by $A_1\sqcup A_2$. If each $A_i\neq \varnothing$ and the sublink $L_{A_2}$ is unlinked, then $L_{A_1}$ is concordant to a local unlink in $(S^3-L_{A_2})\times I$.
\end{lem}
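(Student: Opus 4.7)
The plan is to refine the doubling construction of Lemma~\ref{lemma:str}. By that proof, $\Fix(f) \cong S^2$ divides $S^3$ into two 3-balls $B_+ \cup B_-$ swapped by $f$, and each component of $L$ intersects $S^2$ in exactly two points. Writing $T_\pm := L \cap B_\pm = T_\pm^{A_1} \sqcup T_\pm^{A_2}$ with $T_- = \overline{T_+}$, one has $(S^3, L) = \partial((B^3, T_+) \times [0,1])$, so the slice disks for $L$ in $B^4 = B^3 \times [0,1]$ are $T_+ \times [0,1]$. These split as disjoint slice disks $T_+^{A_i} \times [0,1]$ for the sublinks $L_{A_1}$ and $L_{A_2}$.

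The key observation is that the 4-manifold $W := B^4 - \nu(T_+^{A_2} \times [0, 1])$ has the product structure $W = (B^3 - \nu T_+^{A_2}) \times [0, 1]$ of a genus-$|A_2|$ handlebody with an interval, and contains the disjoint slice disks $T_+^{A_1} \times [0, 1]$ for $L_{A_1}$. I would then identify $W$ with the ``upper half'' $(B_+ - \nu T_+^{A_2}) \times [0, 1]$ of $(S^3 - L_{A_2}) \times [0,1]$, whose $f$-mirror is the ``lower half'' $(B_- - \nu T_-^{A_2}) \times [0, 1]$; these meet along $\Sigma_0 \times [0, 1]$, where $\Sigma_0 = S^2 - \nu \partial T_+^{A_2}$, and their union recovers $(S^3 - L_{A_2}) \times [0, 1]$.

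Given this setup, for each $i \in A_1$ write $L_{A_1, i} = \alpha_i^+ \cup \alpha_i^-$ with $\alpha_i^\pm \subset B_\pm$ joined at $\partial \alpha_i^\pm = \{p_i, q_i\} \subset \Sigma_0$. I would construct the concordance as the union of the trivial cylinder $\alpha_i^+ \times [0, 1]$ in the upper half together with an embedded 2-disk in the lower half that ``pushes'' $\alpha_i^-$ across the fixed sphere rel endpoints, ending at a small perturbation of $\alpha_i^+$ at level $1$. Glueing along the vertical arcs $\{p_i, q_i\} \times [0, 1] \subset \Sigma_0 \times [0, 1]$ yields embedded annuli in $(S^3 - L_{A_2}) \times [0, 1]$ whose level-$0$ boundary is $L_{A_1, i}$ and level-$1$ boundary is a small unknot in a ball near $\alpha_i^+$, disjoint from $L_{A_2}$; varying $i$ and choosing disjoint small balls gives the local unlink.

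The main obstacle is constructing those ``interpolating'' 2-disks in $(S^3 - L_{A_2}) \times [0, 1]$ simultaneously and disjointly for all $i \in A_1$: such a disk amounts to a concordance of arcs $\alpha_i^- \rightsquigarrow \alpha_i^+$ rel endpoints, whose existence as an embedded 2-disk in the 4-manifold is nontrivial when $L_{A_1, i}$ is not null-homotopic in $S^3 - L_{A_2}$. This step should exploit the amphichiral constraint that $f_* [L_{A_1, i}] = [L_{A_1, i}]^{-1}$ in the free group $\pi_1(S^3 - L_{A_2}) = F_{|A_2|}$ (which forces $[L_{A_1, i}]$ to be a palindromic word), combined with the 4-dimensional room and the fact that the $0$-linking condition guarantees $L_{A_1, i}$ is null-homologous, to produce the required embedded 2-disks via a Whitney-move-style pairing of intersection points of $L_{A_1, i}$ with a system of $f$-equivariant disks bounding $L_{A_2}$.
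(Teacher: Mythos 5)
Your first step coincides with the paper's: use $\Fix(f)\cong S^2$ to split $L$ into a tangle $T_+\subset B_+$ and its mirror, and take the product disks $T_+\times I$ in $B^4=B_+\times I$. But the heart of your argument is missing. Everything is reduced to the ``interpolating $2$-disks'' in the lower half $(B_--\nu T_-^{A_2})\times I$, i.e.\ embedded, pairwise disjoint arc-concordances from $\alpha_i^-$ to a push-off of $\alpha_i^+$ rel the vertical boundary, disjoint from the product cylinders; you acknowledge this as ``the main obstacle'' and offer only a Whitney-move heuristic. That sketch does not constitute a proof: this is a \emph{smooth} statement, and Whitney-style cancellation in a $4$-manifold with free fundamental group is exactly the kind of step that cannot be invoked without a construction (framing, disjointness and embeddedness of the Whitney disks are all unaddressed). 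The algebraic input you propose is also shaky: $f_*$ is a nontrivial automorphism of $\pi_1(S^3-L_{A_2})$ and $f_*[L_{A_1,i}]$ is only conjugate to $[L_{A_1,i}]^{-1}$, so no palindromicity of the word follows, and even if it did it is unclear how it would produce embedded disks. (The null-homology claim is fine, since negative amphichirality forces all pairwise linking numbers to vanish.) A further slip: $B^3-\nu T_+^{A_2}$ is a genus-$|A_2|$ handlebody only if the tangle $T_+^{A_2}$ is trivial in $B_+$, which is not automatic and is in fact the point that needs proof.

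Notably, your outline never uses the hypothesis that $L_{A_2}$ is an unlink, which is the paper's key input. The paper's proof avoids constructing any non-obvious surface: it first performs an $f$-equivariant ambient isotopy $G_t$ (built from an isotopy $g_t$ of $B_+$ untangling $T_+^{A_2}$, mirrored to $B_-$ via $f$) so that the half-tangle of $L_{A_2}$ becomes trivial in each ball; then \emph{all} disks are the products $T'\times I$, the $A_2$-disks are standard, and after removing a regular neighborhood $N$ of arcs joining the disks to a point one gets $B^4-N\cong S^3\times I$ in which the complement of the $A_2$-disks is literally $(S^3-L_{A_2})\times I$ and the $A_1$-disks restrict to the desired concordance to a local unlink. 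If you want to salvage your route, you would need to prove the existence of your interpolating disks, and the natural way to do so is precisely this equivariant trivialization of $T_+^{A_2}$ --- at which point you have reproduced the paper's argument.
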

\begin{proof}
As in the proof of Lemma~\ref{lemma:str}, since $|L|\geq 2$, the fixed point set of $f$ is a $2$-sphere $S$. The link $L$ is split into two tangles $T$ and its mirror $\overline{T}=f(T)$ by $S$, and each component of $L$ intersects $S$ in two points. For example, see Figure~\ref{fig:Tangle}. Notice that there are no crossings of $L$ on $S$.

\begin{figure}[h]
  \includesvg[scale=0.7]{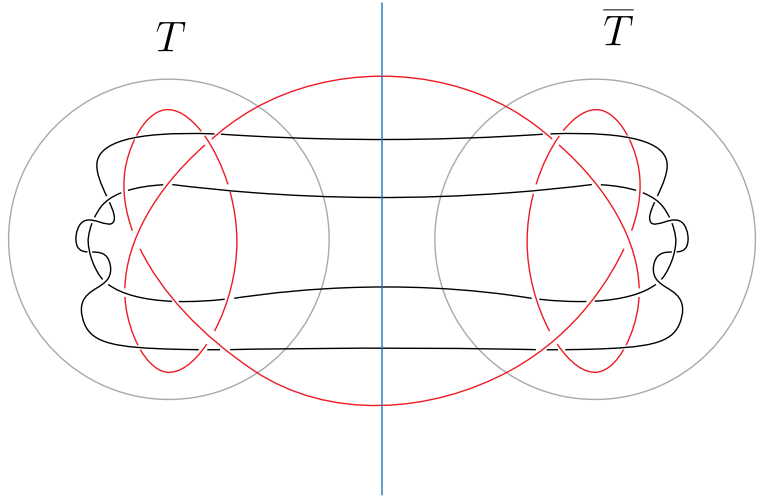}
  \caption{An example of $L$ with $p=1$ and $q=2$. The unlinked sublink $L_{A_2}$ is black-colored and the other sublink $L_{A_1}$ is red-colored. The axis for reflection is blue-colored.}
\label{fig:Tangle}
\end{figure}

Regard $S^3$ as the union of two $3$-balls $B_1^3$ and $B_2^3$ glued along the $2$-sphere $S$ such that $L\cap B_1 = T$ and $L\cap B_2 = \overline{T}$. Let $T_2$ be the part $L_{A_2}\cap T$ of $L_{A_2}$ in $T$. We isotope the tangle $T$ to the other tangle $T'$ in $B_1$ such that $T_2$ becomes untangled. While the usual tangle isotopy is an isotopy of $3$-ball relative to the boundary, our isotopy allows the boundary to be moved as well.

Then it is clear that there exists an isotopy $g_t$ of $B_1$ such that $T_2'=g_1(T_2)$ in the new tangle $T'=g_1(T)$ is trivial in $B_1$. Moreover, $h_t=f|_{B_1} \circ g_t \circ f|_{B_2}$ also gives an isotopy of $B_2$ which makes $\overline{T_2'}=h_1(\overline{T_2})$ in the new tangle $\overline{T'}=h_1 (\overline{T})$ is trivial in $B_2$. Moreover, $g_t$ and $h_t$ agree on the boundary $S$. Thus, we obtain an isotopy $G_t$ on $S^3$ which is equivariant with respect to $f$ and makes the tangles $T$ and $\overline{T}$ to the tangles $T'$ and $\overline{T'}$ whose intersections with $L_{A_2}$ are untangled.

\begin{figure}[h]
  \includesvg[scale=0.8]{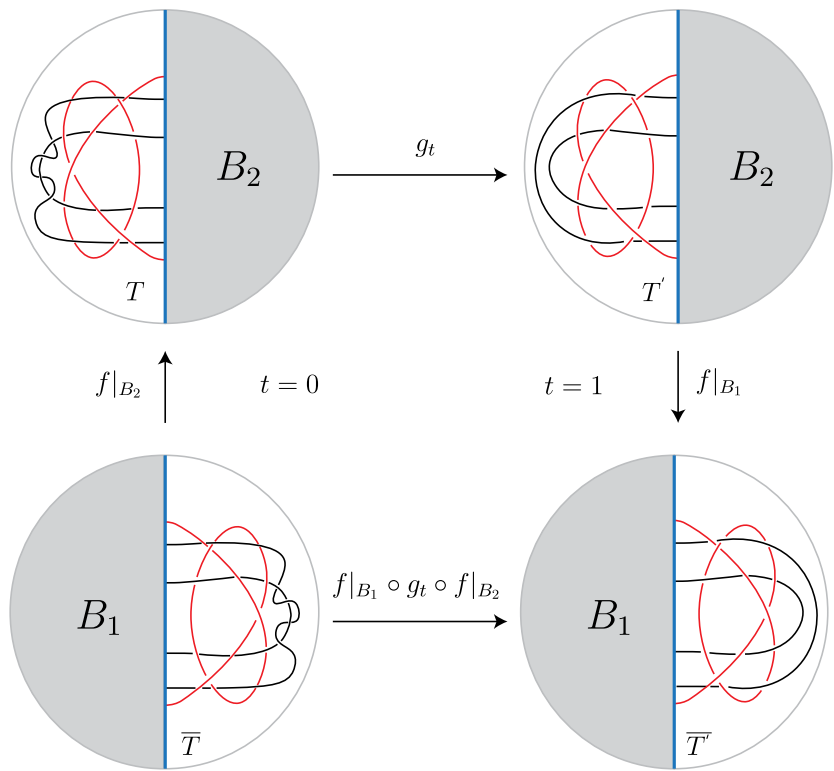}
  \caption{The isotopies $g_t$ defined on $B_1$ and $h_t = f|_{B_1}\circ g_t\circ f|_{B_2}$ on $B_2$ agree on the blue-colored reflection axis $S$ so that their glueing gives an isotopy $G_t$ on $S^3$ to make the part $T_2$ of $L_{A_2}$ in the tangle trivial in each of $B_1$ and $B_2$.}
\label{fig:equiv-isotopy}
\end{figure}

Let $p=|A_1|$ and $q=|A_2|$. Consider now the $(p+q)$ disks $D_1,\dots,D_{p+q}$ given by $T'\times I\subset B^3\times I\cong B^4$, where $D_1,\dots,D_p$ are bounded by the sublink $L_{A_1}$, and $D_{p+1}\cup \cdots \cup D_{p+q}$ are the standard trivial disks bounded by the unlinked sublink $L_{A_2}$. Pick a point $x$ in $B^4-(D_1\cup \cdots \cup D_{p+q})$, and join $x$ with a point in $\int(D_i)$ by disjoint paths $\gamma_i$ for $i=1,\ldots, p+q$ in $B^4$ such that $\int(\gamma_i)\cap D_j = \varnothing$ for $j\neq i$. Let $N$ be a regular neighborhood of $\cup_{i=1}^{p+q} \gamma_i$, which is diffeomorphic to a $4$-ball. 

Then $(D_1,\dots D_{p+q})\cap (B^4-N)$ forms a concordance between $L$ and the $(p+q)$-component unlink in $B^4-N\cong S^3\times I$. Since the disks $D_{p+1},\dots,D_{p+q}$ are trivial, the annuli $(D_{p+1},\dots D_{p+q})\cap (B^4-N)$ are isotopic to the trivial annuli $L_{A_2}\times I\subset S^3\times I$. Hence, $(D_1,\dots,D_p)\cap B^4-N$ is a concordance between $L_{A_1}$ and the $p$-component unlink in $(B^4-N)-(D_{p+1}\cup\dots\cup D_{p+q})\cong (S^3-L_{A_2})\times I$.
\end{proof}

The rest of this section is devoted to proving Theorem ~\ref{thm:link}. Since the proof involves several cases, we first provide a brief outline. Recall that $f$ acts on the companionship graph $\G_L$ as in Section~\ref{sec:3}. We consider two cases, depending on whether $f$ fixes an edge or not.

When there are no fixed edges, we will give a direct proof, based on Lemma~\ref{lemma:elementary-graph}. We pick the unique vertex $v$ containing $\partial E_L$ and check the condition in Lemma~\ref{lemma:elementary-graph} to prove that $L$ is strongly negative amphichiral, which implies that $L$ is rationally slice by Lemma~\ref{lemma:str}.

When $f$ fixes an edge $T$, we give the proofs separately according to whether $T$ is coherently directed, undirected, or every fixed edge is incoherently directed. In any case, we obtain a splice form of $L$ from some negative amphichiral links with less complexity. Based on induction on $c(L)$, we apply Lemma~\ref{lemma:splice_concordance} to get a concordance in a boundary sum of Kawauchi manifolds.

Now we restate Theorem \ref{thm:link}, with respect to specified rational balls obtained by taking the boundary sum of Kawauchi manifolds, based on \cite{Lev23}.
\begin{repintrothm}{thm:link}
    Every negative amphichiral link is slice in $\natural^n V$ for some $n\ge 0$, where $V$ is the Kawauchi manifold.
\end{repintrothm}

\begin{proof}[Proof of Theorem~\ref{thm:link}]
Let $(L,f)$ be a negative amphichiral link indexed by $A$. First, notice that if $L$ is a split link, its split sublinks are also negative amphichiral. Therefore, we may suppose that $L$ is non-split. Moreover, as mentioned in Remark~\ref{rmk:reduced}, we assume that $(L,f)$ is reduced. Consider the action of $f$ on the companionship graph $\G_L$ of $L$.


\vspace{0.25em}
\noindent \textbf{Case 1.} \emph{$f$ fixes no edges in $\G_L$.}
\vspace{0.25em}

By Proposition~\ref{prop:graph_properties} (5), there exists a unique fixed vertex $v$. Note that $v$ contains all boundary components of $E_L$. Let $\mathcal{E}(v)=\{T_1,\ldots, T_n\}$ and write $$\G_L-v=\bigsqcup_{i=1}^n\G_i,$$ where $\G_i$ contains the other end of $T_i$ than $v$. By Proposition~\ref{prop:many-edge-cut}, each $\G_i$ is the companionship graph of a nontrivial knot $K_i$. 

Moreover, since each $K_i$ is not the unknot and the torus $T_i$ has to be compressible on at least one side of $S^3-T_i$, the torus $T_i$ is incompressible in $E_{K_i}$. Hence, the edge $T_i$ is directed towards $v$. Then the sublink $L(v)_{\mathcal{E}(v)}$ of $L(v)$ is unlinked by Lemma~\ref{lemma:unlink}. Now the $(n,0)$-amphichiral link $(L,f)$ has the vertex $v$ whose link $L(v)$ satisfies the hypothesis of Lemma~\ref{lemma:elementary-graph}. Therefore, there exists another map $g:(S^3,L)\to (S^3,L)$ such that $(L,g)$ is a strongly negative amphichiral link, which is then slice in $B^4$ by Lemma~\ref{lemma:str} if $|L|>1$, or in the Kawauchi manifold $V$ if $|L|=1$ by \cite{Kaw09, Lev23}.

\vspace{0.25em}
\noindent \textbf{Case 2.} \emph{$f$ fixes at least one edge in $\G_L$.}
\vspace{0.25em}

We prove by induction on the complexity $c(L)$ that there exists some finite integer $n(L)\geq 0$ such that $L$ is slice in $\natural^{n(L)} V$. By cutting along a fixed edge in $\G_L$, we reduce ourselves to proving the statement for certain other negative amphichiral links $L'$ with strictly smaller complexities. Since the number of fixed edges in $\G_{L'}$ is less than the one of $\G_L$, we may regard the base case as \textbf{Case 1}. Hence, assume now that any negative amphichiral link $L'$ such that $c(L') < c(L)$ is slice in $\natural^{n(L')} V$ for some $n(L')$.

\vspace{0.25em}
\noindent $\blacktriangleright$ \textbf{Case 2.a.} \emph{$f$ fixes a coherently directed edge.}
\vspace{0.25em}

Let $T$ be a coherently directed edge. By the edge-cut along $T$, we obtain two subgraphs of $\G_L$. Write $$\G_L - T = \G_1 \sqcup \G_2,$$ where $\G_1$ is the subgraph where $T$ starts from. Let $L_i$ be the link $L(\G_i)$ for $i=1, 2$. Write $L_1 = K_0 \cup \cdots \cup K_{p}$ and $L_2 = J_0 \cup \cdots \cup J_{q}$ respectively, where $p + q = |L|$. Our starting link $L$ is obtained by splicing $L_1$ and $L_2$ along the components $K_0=(L_1)_T$ of $L_1$ and $J_0=(L_2)_T$ of $L_2$, where $J_0$ is unknotted. In other words, $$L=L_1\underset{K_0\sim J_0}{\bowtie} L_2.$$

The components $K_1,\dots, K_p$ of $L_1$ and $J_1,\dots,J_q$ of $L_2$ correspond to the components of $L$. Since $f$ fixes $T$ and its endpoints by Proposition~\ref{prop:graph_properties} (4), $f$ cannot exchange $\G_1$ and $\G_2$. Hence, the action of $f$ restricts on each $\G_i$.  Since $T$ is coherently directed, from the restricted action of $f$, we have that

\begin{itemize}
    \item $f(K_i) = -K_i$ for $i = 0,\ldots, p$,
    \item $f(J_0)=+J_0$,
    \item $f(J_i)=-J_i$ for $i=1,\ldots,q$.
\end{itemize}

\noindent In particular, $L_1$ is again negative amphichiral and $L_2$ is $(q, 1)$-amphichiral.

Since the Gromov norm of a link complement is independent with the directions of edges in the companionship graph, and $\G_1=\G_{L_1}$ and $\#\G_2 = \#\G_{L_2}$ by Proposition~\ref{prop:edge-cut}, we have $c(L)=c(L_1)+c(L_2)$. Moreover, since $c(L_i)\neq 0$ for $i=1,2$, we have $c(L_i)<c(L)$ for $i=1,2$. Therefore, by induction, $L_1$ is slice in $\natural^{n(L_1)} V$ for some $n(L_1)$. By Lemma~\ref{lemma:splice_concordance} (1), $L=L_1\bowtie L_2$ is concordant to $U^{p+1}\bowtie L_2=U^{p}\sqcup (L_2-J_0)$ in $\natural^{n(L_1)}V$. Let $L' = L_2- J_0$, which is again negative amphichiral. By Lemma~\ref{lemma:decreasing_complexity}, we have that $c(L')\leq c(L_2)$, hence $c(L')<c(L)$. Then, by induction on the complexity, $L'$ is slice in $\natural^{n(L')} V$ for some $n(L')$. Therefore, $L$ is in turn slice in $\natural^{n(L_1)+n(L')} V$.

\vspace{0.25em}
\noindent $\blacktriangleright$ \textbf{Case 2.b.} \emph{$f$ fixes an undirected edge.}
\vspace{0.25em}

Let $T$ be an undirected fixed edge. Note that this only occurs when $L$ has more than one component. By definition, $T$ being undirected means that the edge $T$ with endpoints $v$ and $w$ bounds a solid torus in both sides $v$ and $w$. In particular, the components $L(v)_T$ and $L(w)_T$ of $L(v)$ and $L(w)$ are unknots. In this case, by simply set $v$ as the vertex such that $\epsilon_T^v=-1$, write $$\G_L - T = \G_1 \sqcup \G_2,$$ where $\G_1$ contains $v$. In this case, since $T$ is undirected, each $\G_i$ is the companionship graph Proposition~\ref{prop:edge-cut}. Then the same argument in \textbf{Case 2.a} applies.

\vspace{0.25em}
\noindent $\blacktriangleright$ \textbf{Case 2.c.} \emph{every fixed edge is incoherently directed.}
\vspace{0.25em}

Consider the subgraph of $\G$ consisting of the vertices and edges fixed by $f$, and let $\overline{\G}$ be a connected component of     it containing at least one edge, which exists by assumption. Then $\overline{\G}$ is a tree where every edge is directed. If every vertex of $\overline{\G}$ had an exiting edge and an entering edge, then $\overline{\G}$ would be infinite or have a loop. Hence, there exists a vertex $v\in\overline{\G}$ such that every edge in $\overline{\G}$ incident to $v$ is directed to $v$. Thus, every fixed edge in $\G_L$ incident to $v$ is directed to $v$. In particular, they are incoherently directed.

Let $\G'$ be the maximal subtree of $\G$ containing $v$ with no fixed edges, $\mathcal{E}(\G')=\{T_1,\ldots, T_n\}$, and write $$\G- \G'=\bigsqcup_{i=1}^n\G_i,$$ where the subgraphs $\G'$ and $\G_i$ in $\G$ are joined by a single fixed edge $T_i$. Observe that all the subgraphs $\G',\G_1,\ldots,\G_n$ are invariant under $f$ since each $T_i$ is fixed. Since there are no fixed vertex in $\G'$ other than $v$ by Proposition~\ref{prop:graph_properties} (5), every $T_i$ is incident to $v$. Letting the subset $\mathcal{T}$ of $\mathcal{E}(v)$ be the fixed edges, $\mathcal{T}=\mathcal{E}(\G')$.

By Proposition~\ref{prop:edge-cut}, $\G_i$ is the companionship graph of the link $L_i=L(\G_i)$. Note that $c(L_i) < c(L)$. Let $L'$ be the link $L(\G')$. Let $U_i=(L')_{T_i}$ and $K_i= (L_i)_{T_i}$ be the components indexed by $T_i$ of $L'$ and $L_i$, respectively. By Lemma~\ref{lemma:unlink}, we have that $(L')_{\mathcal{T}}=U_1\cup\dots\cup U_n\subset L'$ is an unlink. Then $L$ is a spliced link $$L=(L_1,\dots,L_n)\underset{K_i\sim U_i}{\bowtie} L'.$$

Note that $L'$ is indexed by the disjoint union of $A'=A(\G')\subset A$ and $\mathcal{T}$. Since every $T_i$ is incoherently directed to $v\in \G'$, the link $L'$ is negative amphichiral. Moreover, since $\G'$ has no fixed edges, so does $\G_{L'}$. Thus, we see that $L'$ is strongly negative amphichiral by \textbf{Case 1}.

Recall that the sublink $(L')_{\mathcal{T}}=U_1\cup\dots\cup U_n$ of $L'$ forms an $n$-component unlink $U^n$. Then, by Lemma~\ref{lemma:unknotted_concordance}, the sublink $(L')_{A'}$ is concordant to an unlink $U^p$ in $(S^3-(L')_{\mathcal{T}})\times I$, where $p=|A'|$ is the number of components of $\partial E_L$ contained in $v$. Then, according to Lemma~\ref{lemma:splice_concordance} (2), we have that $L$ is concordant to the split union $\bigsqcup_{i=1}^n (L_i')\sqcup U^p$ in $S^3\times I$, where $L_i'=L_i-K_i$.

Since $L_i'$ is indexed by $A(\G_i)\subset A$ for every $i=1,\ldots, n$, we have that $L_i'$ is negative amphichiral via the appropriate restriction of $f$. Moreover, $c(L_i')\leq c(L_i)$ by Lemma~\ref{lemma:decreasing_complexity} so that $c(L_i')<c(L)$. Then, by induction, $L_i'$ is slice in $\natural^{n(L_i')}V$ for some $n(L_i')$. Therefore, the link $L$ is slice in $\natural^{n(L)}V$ where $n(L)= n(L_1')+\dots+n(L_n')$.
\end{proof}


\section{Proofs of Theorems~\ref{thm:SNACK} and \ref{thm:SNACK-concordance}}
\label{sec:5}

In this section, we prove Theorem~\ref{thm:SNACK}, which extends the work of Kawauchi \cite{Kaw79} on hyperbolic negative amphichiral knots, and Corollary~\ref{cor:fibered}, which answers the question on Miyazaki knots asked by \cite{KW18}. Furthermore, we prove Theorem~\ref{thm:SNACK-concordance}, which gives a partial answer to Question~\ref{question:con-to-SNACK}, and hence to Question~\ref{question:single-Kaw}.

First of all, we motivate the notion of \text{totally coherent} JSJ structure of a negative amphichiral knot from the proof of Theorem~\ref{thm:link}, and recall its definition to prove Theorem~\ref{thm:SNACK}. Recall that every hyperbolic negative amphichiral knot is strongly negative amphichiral \cite{Kaw79}. In \textbf{Case 1} of Theorem~\ref{thm:link}, we have proved that every negative amphichiral knot whose companionship graph has no fixed edge is strongly negative amphichiral. Since any hyperbolic knot has only a single JSJ piece, it falls under \textbf{Case 1}.

Recall that in \textbf{Case 2.c} we have further considered a maximal subtree $\G'$ without fixed edges of $\G_K$ containing a certain vertex $v$, and such $v$ can be chosen to be the root when the given link is a knot. \textbf{Case~1} is the case when $\G_K = \G'$. The notion of the \textit{maximal coherent subtree} $\G_{max}$ generalizes $\G'$: it is maximal among the subtrees $\G$ containing $v$ and satisfying that every fixed vertex of $\G$ is coherent. If $\G_K$ has no fixed edge, then $\G_{max}=\G'$ is just $\G_K$ itself. Recall that we call a negative amphichiral knot $K$ has a \textit{totally coherent} JSJ structure if $\G_{max} = \G_K$. Now we restate Theorem~\ref{thm:SNACK}:

\begin{repintrothm}{thm:SNACK}
    Let $(K, f)$ be a negative amphichiral knot. If the maximal coherent subtree $\G_{max}$ is the same as $\G_K$, then there exists an orientation-reversing involution $g$ on $S^3$ such that $(K,g)$ is strongly negative amphichiral.
\end{repintrothm}

\begin{remark}
  Note that the provided involution $g$ for $K$ in Theorem~\ref{thm:SNACK} may not be isotopic to $f$ on $E_K$.
\end{remark}

Before proving Theorem~\ref{thm:SNACK}, we first provide a lemma that will be crucially used to prove not only Theorem~\ref{thm:SNACK}, but also Theorem~\ref{thm:SNACK-concordance}.

\begin{lem}\label{lemma:coherent_root}
Let $(K, f)$ be a negative amphichiral knot. If the root $v$ of $\G_K$ is coherent, then the corresponding link $L$ to the maximal coherent subtree $\G_{max}$ of $\G_K$ is a strongly $(1, n)$-amphichiral link of the form: $$L = J_0 \cup U_1 \cup \cdots \cup U_n,$$ where $J_0$ is the component corresponding to $\partial E_K$ and the sublink $U_1 \cup \cdots \cup U_n$ is unlinked.
\end{lem}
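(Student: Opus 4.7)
The link $L = L(\G_{max})$ is indexed by the disjoint union $A(\G_{max}) \cup \mathcal{E}(\G_{max})$. Since $K$ is a knot, $\partial E_K$ is a single torus, and by Proposition~\ref{prop:graph_properties}(3) it is contained in the unique fixed vertex containing it, namely the root $v$ of $\G_K$. As $v \in \G_{max}$, we have $A(\G_{max}) = \{a\}$, producing the single component $J_0$; the remaining $n$ components $U_1, \ldots, U_n$ correspond to the boundary edges $T_i \in \mathcal{E}(\G_{max})$. To prove $U_1 \cup \cdots \cup U_n$ is an unlink, I would invoke Lemma~\ref{lemma:unlink} with $\G' = \G_{max}$. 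Its hypothesis, that each $T_i \in \mathcal{E}(\G_{max})$ is directed from outside $\G_{max}$ into $\G_{max}$, holds because $\G_K$ is rooted at $v \in \G_{max}$ with all edges pointing toward the root, so the unique $\G_K$-path from any outside endpoint of $T_i$ to $v$ must first traverse the endpoint of $T_i$ inside $\G_{max}$.

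For the strongly $(1, n)$-amphichiral involution $g$, I would analyze the orientation-reversing map $F_{\G_{max}}\colon(S^3, L) \to (S^3, L)$ obtained by extending $f|_{M(\G_{max})}$ via the untwisted re-embedding. This map sends $J_0$ to $-J_0$ and preserves $L$ setwise; moreover, for each $T_i$ fixed by $f$, the coherence condition $\epsilon_{T_i}^w = +1$ at its endpoint $w \in \G_{max}$ translates to $F_{\G_{max}}$ preserving the orientation of $U_i$. This uses that the preferred longitudes of $T_i$ as peripheral curves of $w$ and of $M(\G_{max})$ coincide, because the complement region on the side of $T_i$ opposite to $\G_{max}$ is the same in $S^3 - w$ and in $S^3 - M(\G_{max})$. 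I would then produce $g$ by induction on the number of internal fixed edges of $\G_{max}$. In the base case (no such edges), $F_{\G_{max}}$ has a unique fixed vertex in $\G_L$, and I would apply Lemma~\ref{lemma:elementary-graph} to $(L, F_{\G_{max}})$. In the inductive step, I would cut $\G_{max}$ along an internal fixed edge $T$, which is coherently directed since both its endpoints are fixed by Proposition~\ref{prop:graph_properties}(4) and lie in $\G_{max}$ (hence are coherent). This decomposes $L = L_1 \bowtie L_2$ as a splice along $T$; applying induction to each piece yields strongly amphichiral involutions that glue across $T$, as the coherent direction matches the orientations on the pasting boundary torus.

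The main obstacle is the base case when $F_{\G_{max}}$ permutes some $U_i$'s corresponding to non-fixed $T_i$'s: here $(L, F_{\G_{max}})$ does not satisfy the strict $(1, n)$-amphichirality hypothesis of Lemma~\ref{lemma:elementary-graph}, which requires each component to be individually preserved. I would address this by adapting the explicit constructions in Case 1.b (key-chain) and Case 2 (hyperbolic) of the proof of Lemma~\ref{lemma:elementary-graph}, which build new involutions fixing each component by exploiting the unlinkedness of the relevant sublink --- precisely the structure provided by $U_1 \cup \cdots \cup U_n$ in our setting. A secondary technical point is verifying that the gluing in the inductive step is well-defined up to isotopy on the torus $T$, which I would handle by the same collar-neighborhood adjustment used in the construction of the map $g$ in Lemma~\ref{lemma:elementary-graph}.
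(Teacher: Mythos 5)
Your overall strategy — cut $\G_{max}$ along its fixed edges, apply Lemma~\ref{lemma:elementary-graph} to the resulting pieces (which have no fixed edges and a unique fixed vertex), and glue the resulting involutions back together — is essentially the paper's, and your identification of $J_0$ and the unlinkedness of $U_1\cup\cdots\cup U_n$ via Lemma~\ref{lemma:unlink} is fine. The genuine gap is in the gluing step, which is where the real content lies. Saying that ``the coherent direction matches the orientations on the pasting boundary torus'' only matches the two involutions on homology; two orientation-reversing involutions of $T^2$ inducing the same action on $H_1$ need not agree or even be conjugate (one could have fixed circles while the other is free), and you cannot repair a mismatch by an arbitrary collar isotopy: modifying an involution near a torus that it preserves by anything other than a conjugation destroys $g^2=\id$. (The collar adjustment in Lemma~\ref{lemma:elementary-graph} is not the same situation — there the adjusted pieces are swapped in pairs, so the squares cancel; here each gluing torus is preserved on both sides.) The paper's proof handles this by arranging, via the $p=1$ case of Lemma~\ref{lemma:elementary-graph}, that $\Fix(g_i)\cong S^0$ lies on the negative component of each piece, so the restrictions of the $g_i$ to the gluing tori are \emph{free} $(\pm)$-amphichiral involutions, and then invoking Hartley's classification: such free involutions are unique up to conjugation by a diffeomorphism isotopic to the identity, and the two sides differ exactly by the meridian--longitude exchange, so they can be matched by a conjugation supported in a collar. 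Your proposal never controls the fixed-point sets of the pieces' involutions and cites no such uniqueness statement, so the inductive/gluing step as written does not go through.

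The second problem is the case you single out as the ``main obstacle'': it is in fact vacuous, and your proposed workaround would not prove the lemma even if it were needed. By maximality of $\G_{max}$, every edge of $\mathcal{E}(\G_{max})$ is fixed and coherently directed: if a frontier edge $T_i$ were not fixed, its outer endpoint $u$ could not be fixed either (otherwise $T_i$ and $f(T_i)$ would be two distinct edges joining the same pair of fixed vertices, giving a cycle in the tree $\G_K$), and then adjoining $T_i$ and $u$ to $\G_{max}$ would produce a larger subtree containing the root whose fixed vertices are all coherent, contradicting maximality; coherence of the inner endpoint then forces $\epsilon^{w}_{T_i}=+1$. You need this argument (the paper makes it at the start of its proof), because without it you cannot conclude that the final involution sends each $U_i$ to $+U_i$: your fallback of ``adapting'' the key-chain and hyperbolic constructions of Lemma~\ref{lemma:elementary-graph} would only swap non-fixed boundary components in pairs, which does not yield the strongly $(1,n)$-amphichiral conclusion as stated. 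A smaller formal point: your induction on internal fixed edges needs a strengthened statement (the cut pieces are not maximal coherent subtrees of knots), together with the verification, via Lemma~\ref{lemma:unlink} applied in $\G_K$, that each piece's unique fixed vertex satisfies the unlink hypothesis of Lemma~\ref{lemma:elementary-graph}; the paper avoids this by cutting all fixed edges simultaneously.
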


\begin{proof}
Since $v\in \G_{max}$ by assumption, $\G_{max}\neq \varnothing$. Let  $\mathcal{E}(\G_{max})=\{T_1,\ldots, T_n\}$. We will first check that $(L, F_{\G_{max}})$ is a $(1,n)$-amphichiral link of the desired form. Then, by performing edge-cuts along all fixed edges in $\G_{max}$, we will see that the link for each piece is also amphichiral. Then we will obtain amphichiral involution for each piece, and finally we will glue them to get the $(1,n)$-amphichiral involution of $L$.

The link $L$ is indexed by $\partial E_K$ and $\mathcal{E}(\G_{max})$. Observe that every $\mathcal{E}(\G_{max})$ is directed to $\G_{max}$ so that $L_{\mathcal{E}(\G_{max})}$ is unlinked by Lemma~\ref{lemma:unlink}. Let $w$ be a vertex $\G_{max}\cap T_i$. Then $w$ is either not fixed or coherent. If $w$ were not fixed, then $\G_{max}$ would contain $T_i$, so $w$ should be fixed, and hence coherent. Since every $T_i$ is incident to a coherent vertex in $\G_{max}$, $T_i$ is coherently directed or not fixed. However, if some $T_i$ were not fixed, then it would be contained in $\G_{max}$ so that $T_i\notin \mathcal{E}(\G_{max})$. Thus, every $T_i\in \mathcal{E}(\G_{max})$ is coherently directed. Therefore, $L$ is a $(1,n)$-amphichiral link of the desired form, where $J_0=L_{\partial E_K}$ and $U_1\cup \cdots \cup U_n = L_{\mathcal{E}(\G_{max})}$.

Let $\mathcal{T}$ be the set of fixed edges in $\G_{max}$, and write $$\G_{max}-\mathcal{T}=\G_0\sqcup\G_1\sqcup\cdots\sqcup\G_l,$$ where $\G_0$ contains the root $v$. Since each $T\in \mathcal{T}$ is fixed, $f(\G_i)=\G_i$ for each $i=0,\ldots, l$, and each $\G_i$ has no fixed edges. Then, by Proposition~\ref{prop:graph_properties} (5), each $\G_i$ has a unique fixed vertex $v_i$, and note that $v_0=v$.

Let $L_i$ be the corresponding link $L(\G_i)$ and $F_{\G_i}$ be the map on $(S^3, L_i)$ induced from $f$. We will first verify that each $(L_i, F_{\G_i})$ is $(1, n_i)$-amphichiral, where $n_i$ is $|\mathcal{E}(\G_i)| - 1$ for $i\ge 1$ and $|\mathcal{E}(\G_0)|$ for $i=0$. Then we will check the condition of Lemma~\ref{lemma:elementary-graph} to obtain a $(1,n_i)$-amphichiral involution on $L_i$.

For $i\ge 1$, since $v\notin \G_i$, $L_i$ is indexed by edges $\mathcal{E}(\G_i)$ in $\G_K$. Since every such edge is fixed either in $\G_{max}$ or in $\mathcal{E}(\G_{max})$, it is always coherent. Let $A_i^-$ and $A_i^+$ be the exiting and entering subsets of $\mathcal{E}(\G_i)$, respectively. Since $v_i$ is the only fixed vertex in $\G_i$, every edge of $\mathcal{E}(\G_i)$ is incident to $v_i$ so that $|A_i^-|=1$. Thus, $L_i$ is $(1, n_i)$-amphichiral.

$L_0$ is indexed by $\mathcal{E}(\G_0)$ and $\partial E_K$. In this case, every edge in $\mathcal{E}(\G_0)$ is coherent by a similar argument, but only entering $\G_0$. Instead of the exiting edge, the component indexed by $\partial E_K$ forms the negative amphichiral component so that $L_0$ is also $(1, n_0)$-amphichiral. For the time being, we also regard $\partial E_K$ as an exiting edge from $v$, so we let $A_0^-=\{\partial E_K\}$ and $A_0^+ = \mathcal{E}(\G_0)$ for convenience.

Now we consider $L(v_i)$. The set $\mathcal{E}(v_i)$ consists of $A_i^\pm$ and the set $A_i^e$ of edges not fixed. Since every edge except the one in $A_i^-$ is directed to $v_i$, by Lemma~\ref{lemma:unlink}, the sublink $L(v_i)_{A_i^+\cup A_i^e}$ is unlinked. Therefore, by applying Lemma~\ref{lemma:elementary-graph}, we obtain a $(1, n_i)$-amphichiral involution $g_i:(S^3,L_i)\to (S^3,L_i)$. In particular, such $g_i$ can be taken to satisfy $\Fix(g_i)\cong S^0$, which acts in the same fashion as $F_{\G_i}$ on the components of $L_i$. 

Let $T$ be a fixed edge in $\G_{max}$ directed from $v_i$ to $v_j$, namely the torus $T=\partial v_i\cap \partial v_j = \partial E_{L_i}\cap\partial E_{L_j}$.  Note that one of the components $(L_i)_T$ and $(L_j)_T$ is strongly $(-)$-amphichiral and the other is strongly $(+)$-amphichiral, with the involutions $g_i$ and $g_j$. Now we glue all $g_i$ along such $T$ and obtain the $(1,n)$-amphichiral involution of $(S^3, L)$.

Since our involution $g_i$ satisfies $\Fix(g_i)\cong S^0$ and $\Fix(g_i)$ lies in the component $(L_i)_{A_i^-}$, there are no fixed points in the complement $E_{L_i}$. In particular, their restrictions to $T$ are fixed-point-free involutions. Since $(\pm)$-amphichiral involutions on a torus without fixed points are unique up to conjugate with a map isotopic to the identity by \cite{Har80},\footnote{In \cite{Har80}, involutions on torus are characterized up to conjugate with a map isotopic to the identity, namely up to \emph{special equivalence}. In his notations, $(-)$-amphichiral involution is $R_3$, and $(+)$-amphichiral involution is $R_4$.} and they differ by the meridian-longitude exchanging map, we can glue all the maps $g_i$ along such tori to get an $(1, n)$-amphichiral involution $g:(S^3,L)\to (S^3,L)$, acting on the components of $L$ in the same way as $F_{\G_{max}}$.
\end{proof}

The knot $J_0$ given in Lemma~\ref{lemma:coherent_root} plays the role of the strongly negative amphichiral knot in Theorem~\ref{thm:SNACK}. Now we are ready to prove Theorem~\ref{thm:SNACK}.

\begin{proof}[Proof of Theorem~\ref{thm:SNACK}]

Since $\G_K=\G_{max}$, the corresponding link $L(\G_{max})$ is $K$ itself. Moreover, since the root $v$ of $\G_K$ is coherent by definition, by Lemma~\ref{lemma:coherent_root}, $K$ is strongly negative amphichiral. More precisely, all the arguments in the proof of Lemma~\ref{lemma:coherent_root} work when $\G_{max}=\G_K$ so that $n=0$ and $L=K$.
\end{proof}

Recall that Kim and Wu \cite{KW18} asked if every Miyazaki knot, namely, a fibered negative amphichiral knot with irreducible Alexander polynomial, is strongly negative amphichiral. Now we prove that every \textit{fibered} negative amphichiral knot admits a totally coherent JSJ structure, which is subtly indicated by Kim and Wu \cite{KW18}. Corollary~\ref{cor:fibered} is a direct consequence of the following lemma.

\begin{lem}\label{lemma:fibered}
A fibered negative amphichiral knot $(K,f)$ admits a totally coherent JSJ structure.
\end{lem}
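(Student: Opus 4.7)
The plan is to show that every JSJ torus fixed by $f$ is coherently directed, which immediately implies that every fixed vertex of $\G_K$ is coherent and hence $\G_{max} = \G_K$. Fix a fibration $\phi \colon E_K \to S^1$ representing the fibered structure of $K$, with cohomology class $[\phi] \in H^1(E_K) \cong \Z$ characterized by $[\phi](\mu_K) = 1$. The first step is to show that $f$ preserves this class: since $f$ is orientation-reversing on $S^3$ and $f(K) = -K$, the linking number computation $\lk(f(\mu_K), K) = -\lk(f(\mu_K), f(K)) = \lk(\mu_K, K) = 1$ gives $f_{*}\mu_K = \mu_K$ in $H_1(E_K)$, and therefore $f^{*}[\phi] = [\phi]$.

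Next, let $T$ be a fixed JSJ torus of $E_K$, with edge in $\G_K$ directed $w \to v$ toward the root. I would first isotope $T$ to be vertical with respect to $\phi$, i.e.\ so that $\phi|_T \colon T \to S^1$ is a fibration. This is possible because $T$ is incompressible and the restricted class $[\phi|_T] \in H^1(T)$ is nonzero: evaluating on the meridian of the far-side companion at $T$ yields the winding number of the satellite structure there, which is forced to be nonzero by fiberedness. A further standard isotopy using the uniqueness of the JSJ decomposition arranges $f(T) = T$. By Proposition~\ref{prop:graph_properties}(4), $f$ fixes both endpoints of $T$ and hence preserves each side of $T$ in $S^3$; combined with orientation-reversal of $f$ on $S^3$, this makes $f|_T$ orientation-reversing on $T$. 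The first step gives $f|_T^{*}[\phi|_T] = [\phi|_T]$, so $f|_T$ permutes the fibers of $\phi|_T$ preserving the base direction on $S^1$. An orientation-reversing self-homeomorphism of $T^2$ with this property must reverse the oriented fiber: $f|_T(c) = -c$.

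Finally, perform the edge-cut $\G_K - T = \G_1 \sqcup \G_2$ with $w \in \G_2$, and let $J = L(\G_2)$, a knot whose exterior is identified with $M(\G_2)$ via the untwisted re-embedding. In the basis of $H^1(T)$ dual to $\{\mu_J, \lambda_J\}$, one has $[\phi|_T] = w \cdot \mu_J^{*}$, since $\mu_J$ is homologous to $w \cdot \mu_K$ in $E_K$ while $\lambda_J$ bounds in $E_J \subset E_K$. Poincar\'e duality on $T$ then forces the primitive fiber $c$ to be $\pm \lambda_J$, and hence $f|_T(\lambda_J) = -\lambda_J$. Identifying $\lambda_J$ with the preferred longitude $\lambda_w$ of $L(w)_T$ on $T$ (both are the $0$-framed longitude of $T$ relative to the core $J$ of the solid torus that $T$ bounds on the root side of $S^3$, and both untwisted re-embeddings preserve this framing), we conclude $f|_T(\lambda_w) = -\lambda_w$, which translates to $F_w(L(w)_T) = -L(w)_T$ and $\epsilon_T^w = -1$, i.e.\ $T$ is coherently directed. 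The main obstacle I expect is precisely this last identification $\lambda_w = \lambda_J$ on $T$: showing that the untwisted re-embeddings of the single JSJ piece $w$ and of the larger subtree $M(\G_2)$ induce the same longitudinal framing on $T$, both agreeing with the intrinsic $0$-framing of $J \subset S^3$, requires some care with the peripheral-structure-preserving property of these re-embeddings.
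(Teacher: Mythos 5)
Your route is genuinely different from the paper's, and its core is sound. The paper cuts $\G_K$ along the fixed edge $T$, writes $K=P(J)$ with the root-side two-component link $L=P\cup U$, quotes \cite[Theorem~2.1]{HMS08} to get $\lk(P,U)\neq 0$ from fiberedness, and then observes that an incoherently directed $T$ would make $(L,F_{\G_1})$ negative amphichiral, forcing $\lk(P,U)=0$ --- a one-line linking-number contradiction. You instead work with the fibration class $[\phi]\in H^1(E_K)$: negative amphichirality gives $f^*[\phi]=[\phi]$, the nonzero winding number at $T$ gives $[\phi|_T]=w\,\mu_J^*\neq 0$, and the determinant argument on $H_1(T)$ (using that $f|_T$ is orientation-reversing because $f$ preserves both sides of $T$, by Proposition~\ref{prop:graph_properties}(4)) yields $f_*\mu_J=+\mu_J$ and $f_*\lambda_J=-\lambda_J$. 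Both proofs consume the same two inputs --- the constraint amphichirality puts on linking data, and the fact that fiberedness forces nonzero winding number at every companion torus --- so your phrase ``forced to be nonzero by fiberedness'' needs a citation (it is exactly \cite[Theorem~2.1]{HMS08}) or a short argument (if the winding number at $T$ vanished, $\pi_1(T)\cong\Z^2$ would inject into the kernel of $\phi_*$, i.e.\ into the free fundamental group of the fiber, which is impossible). Note also that you never need to literally isotope $T$ to be vertical: the restriction of cohomology classes suffices, which dissolves the unaddressed worry about keeping $f(T)=T$ during that isotopy.

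The identification $\lambda_w=\lambda_J$ that you flag is a real issue as stated, because the peripheral-curve condition defining the untwisted re-embedding of the single piece $w$ only pins down the meridian at $T$ (the adjacent complementary component of $w$ is the solid torus on the root side, whose peripheral curve is $\mu_J$), not the longitudinal framing. The identification is in fact true in your situation, but verifying it requires, e.g., showing the classes agree in $H_1(w)$ and that $H_1(T)\to H_1(w)$ is injective, which again uses nonvanishing winding numbers. The cleaner fix is to drop the longitude altogether: for an ambient-orientation-reversing map, $F_w$ reverses the component $L(w)_T$ if and only if it preserves its oriented meridian (since $\lk(F_w\mu,F_w L(w)_T)=-\lk(\mu,L(w)_T)$), and that meridian corresponds to $\mu_J$ under the untwisted re-embedding precisely by the peripheral-curve property. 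Your computation $f_*\mu_J=+\mu_J$ therefore already gives $\epsilon_T^w=-1$, hence $\epsilon_T^v=+1$ by Lemma~\ref{lemma:extension}, with no framing question left open.
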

\begin{proof}
In order to prove that $\G_K = \G_{max}$, i.e., every fixed vertex is coherent, we show that every fixed edge is coherently directed. Let $T$ be a fixed edge directed from $w$ to $v$. We prove that $\epsilon_{T,v}=+1$.
Write $$\G_K-T=\G_1\sqcup \G_2,$$ where $\G_1$ contains $v$ and $\G_2$ contains $w$. Note that $\G_1$ contains the root of $\G_K$.

Let $L$ be the link $L(\G_1)$. Since $M(\G_1)$ has two boundary components from $\partial E_K$ and $T$, $L$ has two components, and we write $L=P\cup U$, where $P$ is the component from $\partial E_K$ and $U=L_T$. For example, see Figure~\ref{fig:lemma5.3}. Note that $U$ is unknotted since $T$ is directed to $v\in \G_1$. Moreover, $\G_2$ is the companionship graph $\G_J$ for some nontrivial knot $J$ by Proposition~\ref{prop:many-edge-cut}. Then, the spliced knot $K=J\bowtie L$ can be described as a satellite of the companion $J$ and the pattern given by $P\subset S^3-U=S^1\times D^2$, i.e., $K=P(J)$.

Since $K$ is fibered, according to \cite[Theorem~2.1]{HMS08}, we have that the winding number of the pattern, i.e., the linking number between $P$ and $U$, is non-zero. Now if $\epsilon_{T,v}=-1$ then $(L,F_{\G_1})$ would be a negative amphichiral link, and hence $\lk(P,U)=0$. Therefore $\epsilon_{T,v}=+1$ so that $T$ is coherently directed. Since this holds for every fixed edge $T$, we deduce that $(K,f)$ is admits a totally coherent JSJ structure.
\end{proof}

\begin{figure}[htbp]
\centering
\includegraphics[width=0.7\columnwidth]{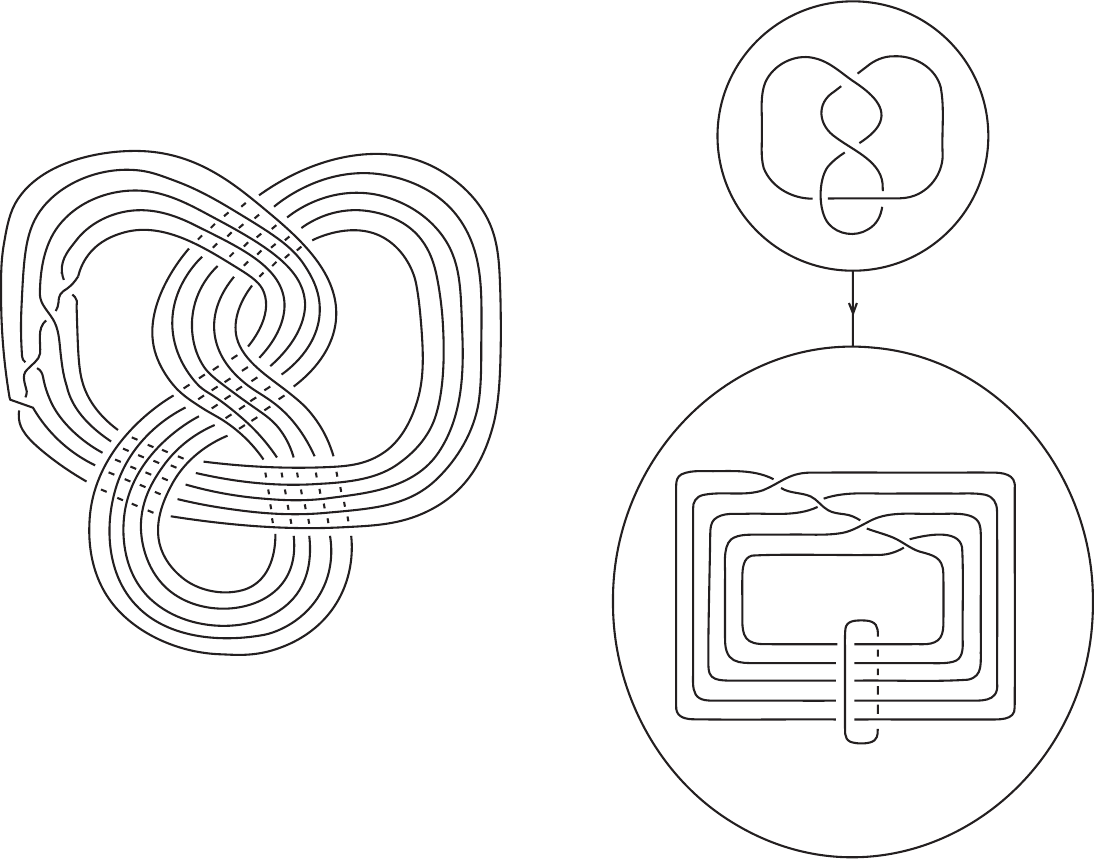}
\caption{Left: A fibered negative amphichiral knot $K = P(J)$. Right: the companionship graph $\G_K$ of $K$.}
\label{fig:fibered}
\end{figure} 

An example of a satellite Miyazaki knot $K = P(J)$ with its companionship graph $\G_K$ is depicted in Figure~\ref{fig:fibered} and \cite[Example~3]{KW18}. Here, $J$ is the figure-eight knot $4_1$, and $P(U)$ is the unknotted pattern given by the Turk's head knot $P(U) = Th(5,1)$ (see \cite{DPS24} for a survey on Turk's head knots). By following the recipe \cite[Proposition~4.1]{KW18}, the ingredients $P(U)=Th(p,1)$ with odd $p$ and $J=4_1$ will provide more Miyazaki knots. 

Furthermore, in our purpose to obtain a fibered negative amphichiral knot, we do not require $P(U)$ to be unknotted or the Alexander polynomial of $P(J)$ to be irreducible. Since every Turk's head knot $Th(p,q)$ is fibered as a braid closure in $S^1\times D^2$ by \cite{Sta78} and it is known that $P(J)$ is fibered if and only if $P(U)$ in $S^1\times D^2$ and $J$ are fibered by \cite{HMS08}, we can just take $P(U)=Th(p, q)$ with odd $p$ and $\gcd(p,q)=1$, and $J$ as any fibered negative amphichiral knot, to produce more fibered satellite negative amphichiral knots $P(J)$.

Now we move on to Theorem~\ref{thm:SNACK-concordance}. We will motivate the notions of \textit{properly incoherent} JSJ structure and \textit{incoherent} root in a top-down perspective. Then we provide a lemma concerning the case where $\G_K$ has the incoherent root, and prove Theorem~\ref{thm:SNACK-concordance}.

Recall that we can always obtain the splice form of $K$ as $K = (K_1,\ldots, K_n)\bowtie L$ with $L=L(\G_{max})$ whenever $\G_{max}$ is nonempty. When the root is coherent, equivalently $\G_{max}\neq \varnothing$, $L$ is a strongly $(1,n)$-amphichiral link of the form in Lemma~\ref{lemma:coherent_root}. Thus, if every $K_i$ is slice, then by Lemma~\ref{lemma:splice_concordance} (2), $K$ is concordant to the strongly negative amphichiral knot $J_0$, as desired. Note that each $K_i$ is a negative amphichiral knot with the root which is not coherent, and each root has at least one incoherently directed edge. Thus, we seek a condition for such a negative amphichiral knot to be slice.

Let $J$ be a negative amphichiral knot such that there exists at least one incoherently directed edge in $\G_J$ incident to the root $v$. We similarly proceed with \textbf{Case 2.c} of proof of Theorem~\ref{thm:link} to $\G_J$. Namely, we remove all fixed edges in $\G_J$ to get the splice form $J = (J_1,\ldots,J_m)\bowtie L$ where $L$ is $(p,q)$-amphichiral with $p\ge 2$. More precisely, $p$ is the number of incoherently directed edges incident to the root $v$, plus one from $\partial E_J$, and $q$ is the number of coherently directed edges in $\mathcal{E}(v)$. To apply Lemma~\ref{lemma:unknotted_concordance} to conclude that $J$ is slice, we desire that $L$ is a strongly negative amphichiral link of the form in Lemma~\ref{lemma:str}. Then we simply require that all fixed edges incident to $v$ be incoherently directed to use Lemma~\ref{lemma:elementary-graph}.
 
Wrapping up all the reverse engineering above, we recall that $v$ is \textit{incoherent} if there exists at least one fixed edge incident to $v$, and every such edge is incoherently directed. Recall also that $K$ has a \textit{properly incoherent} JSJ structure if $\G_K-\G_{max}= \sqcup_{i=1}^n \G_i$ and each $\G_i$ has the incoherent root. We don't have to assume the root of properly incoherent $\G_K$ is coherent because otherwise $\G_{max}=\varnothing$ so that $\G_K$ itself has the incoherent root by definition as Proposition~\ref{prop:coherent-properties} (3) and (4).
Now we are ready to prove Theorem~\ref{thm:SNACK-concordance}.

\begin{repintrothm}{thm:SNACK-concordance}
    Let $(K, f)$ be a negative amphichiral knot. For the maximal coherent subtree $\G_{max}$ of $\G_K$, write $$\G_K - \G_{max} = \bigsqcup_{i=1}^n\G_i.$$ If the root of each $\G_i$ is incoherent, then there exists a strongly negative amphichiral knot $J$ such that $K$ and $J$ are concordant.
\end{repintrothm}

\noindent We first provide the following lemma as desired:

\begin{lem}\label{lemma:incoherent_root}
Let $(K,f)$ be a negative amphichiral knot. If $\G_K$ has the incoherent root, then $K$ is slice.
\end{lem}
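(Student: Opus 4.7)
The plan is to adapt the argument from Case~2.c in the proof of Theorem~\ref{thm:link}, taking the root $v$ of $\G_K$ as the distinguished fixed vertex. First I would let $\G'$ be the maximal subtree of $\G_K$ containing $v$ with no fixed edges, and write $\mathcal{E}(\G')=\{T_1,\dots,T_n\}$ and $\G_K-\G'=\bigsqcup_{i=1}^{n}\G_i$. Maximality forces each $T_i$ to be fixed, for otherwise $\G'\cup\{T_i\}$ would be a strictly larger subtree containing $v$ with no fixed edges; hence both $\G'$ and each $\G_i$ are $f$-invariant. Since $\G'$ has no fixed edges but contains the fixed vertex $v$, Proposition~\ref{prop:graph_properties}~(5) gives that $v$ is the unique fixed vertex of $\G'$, and consequently every $T_i$ is incident to $v$. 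Because $\G_K$ is a rooted tree with root $v$, each $T_i$ is directed toward $v$, and since $v$ is incoherent we have $\epsilon^v_{T_i}=-1$ for every $i$; moreover $n\ge 1$ by the very definition of incoherent root.

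Next I would set $L=L(\G')$, with components $J_0$ indexed by $\partial E_K$ and $U_1,\dots,U_n$ indexed by $T_1,\dots,T_n$. The analysis above shows $(L,F_{\G'})$ is $(n+1,0)$-amphichiral, Lemma~\ref{lemma:unlink} applied to $\G'\subset\G_K$ yields that $U_1\cup\cdots\cup U_n$ is unlinked, and Proposition~\ref{prop:many-edge-cut} gives that each $J_i:=L(\G_i)$ is a nontrivial knot, so
\[
K=(J_1,\dots,J_n)\underset{K_i\sim U_i}{\bowtie}L.
\]
Then I would upgrade $(L,F_{\G'})$ to a strongly negative amphichiral structure via Lemma~\ref{lemma:elementary-graph}. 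The two hypotheses to check are that the action fixes a single vertex of $\G_L=\G'$ (namely $v$, by the uniqueness above) and that $L(v)_{\mathcal{E}_L(v)}$ is unlinked. For the second, Remark~\ref{rmk:edge-cut} says that an edge of $\G'$ could become undirected in $\G_L$ only if it lies in some $DC(T_i)$; but since $v$ is the root of $\G_K$ no directed path leaves $v$, so Proposition~\ref{prop:DC} forces every $DC(T_i)$ to contain no directed edges. Therefore every internal edge of $\G'$ incident to $v$ remains directed toward $v$ in $\G_L$, and Lemma~\ref{lemma:unlink} applied to the singleton $\{v\}\subset\G_L$ delivers the required unlinking. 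This produces an involution $g$ making $(L,g)$ strongly $(n+1,0)$-amphichiral.

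Finally, since $|L|=n+1\ge 2$, I would apply Lemma~\ref{lemma:unknotted_concordance} to get a concordance from $J_0$ to a locally unknotted $U_0$ inside $(S^3-(U_1\cup\cdots\cup U_n))\times I$; combined with the trivial product cylinders $U_i\times I$ this is a concordance from $L$ to the split link $L'=U_0\sqcup U_1\sqcup\cdots\sqcup U_n$ satisfying the cylinder-complement hypothesis of Lemma~\ref{lemma:splice_concordance}~(2). That lemma then transfers the concordance through the splice, showing that $K$ is concordant to $(J_1,\dots,J_n)\bowtie L'$, which by Definition~\ref{defn:splice} equals $L'-(U_1\cup\cdots\cup U_n)=U_0$ because every $U_i$ is a split unknotted component of $L'$. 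Hence $K$ is concordant to the unknot $U_0$, so $K$ is slice. The main technical hurdle is verifying the edge-direction hypothesis for Lemma~\ref{lemma:elementary-graph}, which is precisely where the rootedness of $\G_K$ enters the argument.
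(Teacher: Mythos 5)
Your proposal is correct and follows essentially the same route as the paper: you cut along the fixed edges incident to the root (your maximal fixed-edge-free subtree $\G'$ containing the root coincides with the paper's $\G_0$), show the resulting link $L$ is strongly negative amphichiral via Lemma~\ref{lemma:elementary-graph} together with Lemma~\ref{lemma:unlink}, and then combine Lemma~\ref{lemma:unknotted_concordance} with Lemma~\ref{lemma:splice_concordance}~(2) to conclude $K$ is concordant to the unknot. Your handling of the $DC(T_i)$ subtlety via Proposition~\ref{prop:DC} is a mild variant of the paper's direct application of Lemma~\ref{lemma:unlink} to $\G_K$ and $v$, but the argument is the same in substance.
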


\begin{proof}
Let $\mathcal{T}=\{T_1,\ldots, T_n\}\subset \mathcal{E}(v)$ be the set of the fixed edges incident to the root $v$. Write $$\G_K-\mathcal{T}=\G_0\sqcup \G_1\sqcup \dots\sqcup\G_n,$$ where $\G_0$ is the connected component containing $v$, see Figure~\ref{fig:lemma5.2}. Let $L$ be the link $L(\G_0)$. Since $\G_0$ contains the root $v$, each of $\G_i$ is the companionship graph of a nontrivial knot $J_i$ by Proposition~\ref{prop:many-edge-cut}. Since $L$ is indexed by $\partial E_K$ and $\mathcal{T}=\{T_1,\ldots, T_n\}$, and each $T_i$ is incoherently directed, $L$ is an $(n+1)$-component negative amphichiral link, and $L_{\mathcal{T}}$ is unlinked by Lemma \ref{lemma:unlink}. For simplicity, we write the components $L_{T_i}=U_i$, and $L-L_{\mathcal{T}}=J$. Then the knot $K$ is the resulting spliced knot: $$K = (J_1,\dots,J_n)\underset{J_i\sim U_i}{\bowtie} L.$$

  \begin{figure}[htbp]
\centering
\includesvg[width=0.55\columnwidth]{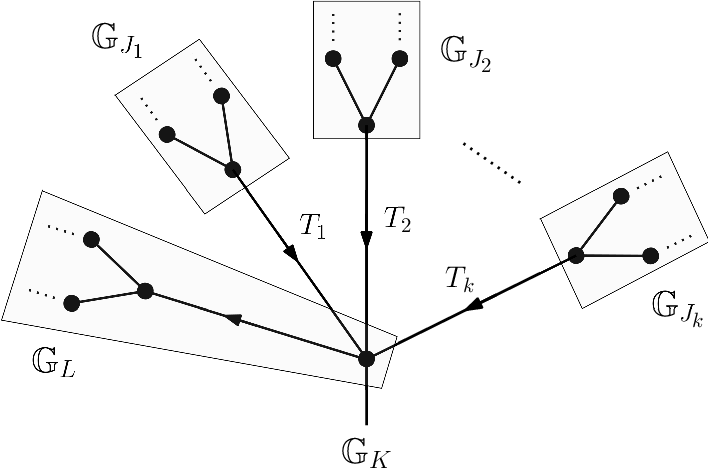}
\caption{The edge-cuts of $\G_K$ along the fixed edges $T_i$ incident to the root $v$ for $i=1,\ldots,n$.}
\label{fig:lemma5.2}
\end{figure} 

For the proof, it is enough to show that $L$ is strongly negative amphichiral. The reason is, by Lemma~\ref{lemma:unknotted_concordance}, if $L$ is strongly negative amphichiral, then $J$ is concordant to a local unlink in $(S^3-L_{\mathcal{T}})\times I$. From this concordance, applying Lemma~\ref{lemma:splice_concordance} (2), $K$ is concordant to the splice of $(J_1,\cdots, J_n)$ and the $(n+1)$-component unlink, which is the unknot.

Now we prove that $L$ is strongly negative amphichiral by applying Lemma~\ref{lemma:elementary-graph}. Let $A$ be the index set of $L$, which consists of $\partial E_K$ and $\mathcal{T}$, and consider the companionship graph $\G_L$ obtained from $\G_0$ by undirecting $DC(T_i)$ by Proposition~\ref{prop:edge-cut}. Since any edges in $\G_0$ incident to $v$ are not fixed, the same holds for such edges in $\G_L$. Then, by Proposition~\ref{prop:graph_properties} (5), there does not exist a fixed vertex in $\G_L$ other than $v$, so $L(v)$ is indexed by the disjoint union of $A$ from $\partial E_L = \partial E_K \cup \mathcal{T}$, and $\mathcal{E}(v)\cap \G_0$ from the incident edges to $v$ in $\G_L$.

Since $\G_K$ is a rooted tree, every incident edge to $v$ in $\G_0$ is directed to $v$. Then by applying Lemma~\ref{lemma:unlink} to $\G_K$ and $v$, the sublink $L(v)_{\mathcal{E}(v)\cap \G_0}$ of $L(v)$ is unlinked. Thus, the conditions for $L$ and $\G_L$ of Lemma~\ref{lemma:elementary-graph} hold. Therefore, we can conclude that $L$ is strongly negative amphichiral, which completes the proof.
\end{proof}

\begin{proof}[Proof of Theorem~\ref{thm:SNACK-concordance}]

Let $v$ be the root of the companionship graph $\G_K$. Recall from Proposition~\ref{prop:coherent-properties} (3) that if $v$ is not coherent, then $\G_K =\varnothing$ so $\G_K$ is properly incoherent if and only if $\G_K$ has the incoherent root by Proposition~\ref{prop:coherent-properties} (4). In this case, by Lemma~\ref{lemma:incoherent_root}, $K$ is concordant to the unknot, which is strongly negative amphichiral.

Hence, from now on, assume that $v$ is coherent. Let $L$ be the link $L(\G_{max})$ corresponding to the maximal coherent subtree $\G_{max}$ of $\G_K$. Let $T_i$ be the edge joining $\G_{max}$ and $\G_i$. Note that $L$ is indexed by $\partial E_K$ and $\mathcal{T}=\{T_1,\ldots, T_n\}$. By Lemma~\ref{lemma:coherent_root},  the component $J_0=L_{\partial E_K}$ of $L$ is strongly negative amphichiral and $L_{\mathcal{T}}$ is unlinked.

Let $K_i$ be the nontrivial knot such that $\G_{K_i}=\G_i$ obtained from Proposition~\ref{prop:many-edge-cut}. Let $U_i$ be the component $L_{T_i}$. Note that the knot $K$ is the resulting knot of the splice form:
$$K = (K_1,\dots,K_n)\underset{K_i\sim U_i}{\bowtie} L.$$
Since each $\G_{K_i}=\G_i$ has the incoherent root, by Lemma~\ref{lemma:incoherent_root}, there exist concordances in $S^3\times I$ between $K_i$ and the unknot for any $i$. By Lemma~\ref{lemma:splice_concordance} (1), we conclude that $K$ is concordant to $U^n\bowtie L=J_0$, which is a strongly negative amphichiral knot.
\end{proof}


\section{Potential candidates for Question~\ref{question:single-Kaw}}
\label{sec:6}

Back to Question~\ref{question:con-to-SNACK}, while we give a partial answer positively in Theorem~\ref{thm:SNACK-concordance}, there are negative amphichiral knots whose companionship graphs are not properly incoherent. These kinds of knots form potential candidates for Question~\ref{question:single-Kaw}, and hence potential counterexamples to  Question~\ref{question:con-to-SNACK}. We say a $(\pm)$-amphichiral knot \emph{weakly $(\pm)$-amphichiral} if it is not strongly $(\pm)$-amphichiral, respectively. We close this article by presenting several potential candidates for Question \ref{question:single-Kaw}, namely, weakly negative amphichiral knots that may not be slice in the Kawauchi manifold $V$.

Recall that the \emph{Fox--Milnor condition} \cite{FM66} states that the Alexander polynomial $\Delta_K (t)$ of a slice knot $K$ factors as $\Delta_K (t) = f(t) f(t^{-1})$ for some polynomial $f$ with integer coefficients.

\begin{prop}\label{prop:cand1}
    If $K_1$ is negative amphichiral, and $K_2$ and $K_3$ are positive amphichiral, then the knot $K$ in Figure~\ref{fig:cand1} is a negative amphichiral knot whose $\G_K$ is not properly incoherent. Moreover,
    \begin{itemize}
        \item If at least one $K_i$ is weakly amphichiral, then $K$ is weakly negative amphichiral.
        \item If the Alexander polynomial of $K_1$ fails to satisfy the Fox--Milnor condition, then $K$ is not slice.
    \end{itemize}

    \begin{figure}[h]
  \includesvg[scale = 0.7]{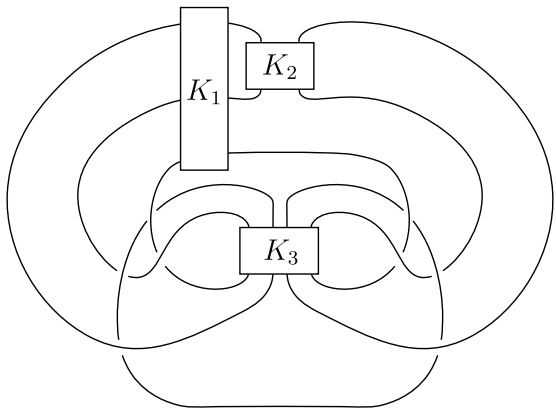}
  \caption{A negative amphichiral knot $K$ whose JSJ structure is not properly incoherent.}
\label{fig:cand1}
\end{figure}
\end{prop}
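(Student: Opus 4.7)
The plan is to read off the companionship graph $\G_K$ directly from Figure~\ref{fig:cand1} and then handle the four assertions in turn using the machinery of Section~\ref{sec:3} and an Alexander polynomial calculation. The construction produces a JSJ decomposition in which $K_1, K_2, K_3$ each sit in a distinguished piece, joined through fixed JSJ tori to a central pattern piece. To build the amphichiral map $f$, I would glue together the amphichiral involutions/maps of the $K_i$ along these tori, exactly as in the extension argument preceding Lemma~\ref{lemma:elementary-graph}: the peripheral structure is preserved on each piece and the orientation signs along the splice tori match because of the $(-)/(+)/(+)$ typing of $K_1, K_2, K_3$.

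To see that $\G_K$ is not properly incoherent, I would argue as follows. Let $w$ be the fixed central vertex of $\G_K$, with fixed edges leading out to the $\G_{K_i}$ subtrees. By Lemma~\ref{lemma:extension}, the edge to $\G_{K_1}$ is coherently directed (since the companion $K_1$ is $(-)$-amphichiral), while the edges to $\G_{K_2}, \G_{K_3}$ are incoherently directed (since $K_2, K_3$ are $(+)$-amphichiral). Hence $w$ has both coherent and incoherent incident fixed edges, so $w$ is neither coherent nor incoherent, exactly as for the vertex $w$ of Figure~\ref{fig:max-special}. Then $\G_{max}$ cannot contain $w$, so $w$ is the root of a component of $\G_K - \G_{max}$, and this root fails to be incoherent, contradicting the properly incoherent condition.

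For the weakness claim, I argue by contrapositive. Suppose $K$ is strongly negative amphichiral with involution $g$. By uniqueness of the JSJ decomposition (up to isotopy), one can isotope $g$ to preserve $\mathcal{T}$, and the restriction $g|_{v_i}$ to the JSJ piece $v_i$ containing $K_i$ is an involution. Extending through the untwisted re-embedding (Lemma~\ref{lemma:untwisted re-embedding} and the remarks following it), this gives an involution of $(S^3, K_i)$ which, by Lemma~\ref{lemma:extension} applied to the fixed splice edge, reverses the orientation of $K_1$ and preserves those of $K_2, K_3$. Each $K_i$ is therefore strongly amphichiral of its respective type, contradicting weak amphichirality of any one of them.

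Finally, the non-sliceness follows from multiplicativity of the Alexander polynomial under splicing: the polynomial $\Delta_K(t)$ factors, up to units, as a product involving $\Delta_{K_1}(t^{w_1})$ where $w_1$ is the winding number of the pattern piece around the $K_1$-slot, together with similar factors for $K_2, K_3$ and the pattern. If $w_1 \neq 0$ then $\Delta_{K_1}(t^{w_1})$ fails the Fox--Milnor condition whenever $\Delta_{K_1}(t)$ does, and this obstruction passes to $\Delta_K(t)$. The main obstacle will be verifying $w_1 \neq 0$ from Figure~\ref{fig:cand1} and tracking the edge-direction bookkeeping in the second step carefully enough to guarantee exactly the mixed coherent/incoherent situation at $w$; once this combinatorial check is in place, all four assertions follow from the general theory established in Section~\ref{sec:3}.
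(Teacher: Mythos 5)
There is a genuine gap, and it sits exactly where the paper has to do real work: the ``weakly'' claim. In your contrapositive argument you assume that a hypothetical strong negative amphichiral involution $g$ of $K$ must fix each splice torus $T_i$, preserve each subtree $\G_{K_i}$, and restrict on the central JSJ piece to the visible reflection, so that by Lemma~\ref{lemma:extension} it induces a $(-)$-involution on $K_1$ and $(+)$-involutions on $K_2,K_3$. None of this is automatic: Lemma~\ref{lemma:extension} only gives the product relation $\epsilon_T^v\cdot\epsilon_T^w=-1$, not which sign occurs on which side; the signs are dictated by how $g$ restricts to the central link $L=L(v)$, and a priori $g$ could restrict to a different symmetry of $(S^3,L)$ (flipping the sign on $L_1$ and hence demanding only a positive involution of $K_1$), or could permute the subtrees $\G_2$ and $\G_3$ (e.g.\ if $K_2$ and $K_3$ are isotopic, which the hypotheses allow), in which case no individual involution of $K_2$ or $K_3$ is produced and no contradiction with weak amphichirality arises. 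The paper closes precisely this loophole by a rigidity input your proposal has no substitute for: it verifies with SnapPy that the central link $L$ of Figure~\ref{fig:cand1-grp} is hyperbolic and that the reflection $F_L$ (with signs $-,+,-,-$ on $L_*,L_1,L_2,L_3$) is its \emph{unique} symmetry, which forces $F_v=F_L$ for any amphichiral involution of $K$ and hence pins down both the induced involutions on the $K_i$ and the coherence pattern at the root independently of the choice of amphichiral map.

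Two smaller points. First, your construction of the negative amphichiral map $f$ glues symmetries of the $K_i$ pieces but never supplies the needed symmetry of the central pattern piece; the paper uses the explicit reflection $F_L$ and isotopes the maps on the $K_i$ pieces in collars of the tori to match it, as in Lemma~\ref{lemma:elementary-graph}. Second, your Alexander polynomial step is not yet a proof: it is false in general that a Fox--Milnor failure of one splice factor ``passes to'' the product (for instance the square of a polynomial failing Fox--Milnor can satisfy it), so you must compute the actual winding numbers from Figure~\ref{fig:cand1}. The paper does this: the $K_1$ slot has winding number one and the $K_2,K_3$ slots have winding number zero, so by Budney's splicing formula $\Delta_K\doteq(t^2-t+1)^2\,\Delta_{K_1}(t)$, and since the extra factor is of the form $h(t)h(t^{-1})$ with $h$ irreducible, a Fox--Milnor failure of $\Delta_{K_1}$ does transfer to $\Delta_K$. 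Your treatment of the ``not properly incoherent'' claim via Proposition~\ref{prop:coherent-properties} is essentially the paper's, but note it too is cleanest once the uniqueness of $F_L$ is in hand, since properly incoherent is a property of the pair $(K,f)$.
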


\begin{proof}
Consider the companionship graph $\G_K$ of $K$ as illustrated in Figure~\ref{fig:cand1-grp} with the root $v$. Each subgraph $\G_i$ for $i=1,2,3$ is the companionship graph $\G_{K_i}$ of $K_i$. Let $T_i$ be the edge from the root of $\G_i$ to $v.$ Let $L$ be the link $L(v)$ labeling $v$ indexed by $\{*,1, 2, 3\}$ as in Figure~\ref{fig:cand1-grp}. Then the resulting $K$ has a splice form as follows: $$K = (K_1,K_2,K_3)\underset{K_i\sim L_i}{\bowtie}L.$$

\begin{figure}[h]
\includesvg[scale=0.6]{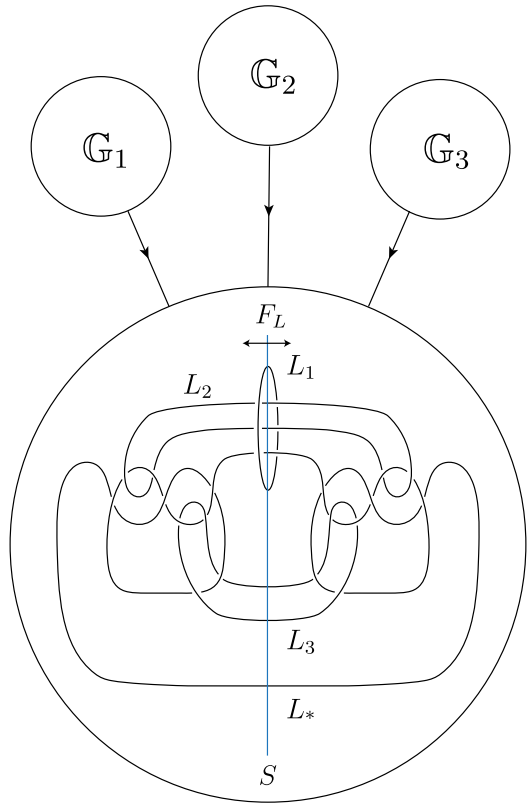}
  \caption{The companionship graph $\G_K$ of the knot $K$ in Figure~\ref{fig:cand1}. The reflection $F_L$ along the blue-colored $2$-sphere $S$ is the $(3,1)$-amphichiral involution of $L$. Note that the component $L_1$ is contained in the sphere $S$.}
\label{fig:cand1-grp}
\end{figure}

The link $L$ has an obvious $(3,1)$-amphichiral involution $F_L$ on $(S^3, L)$ by the reflection along the sphere $S$ such that
    \begin{itemize}
        \item $F(L_*) = -L_*$,
        \item $F(L_1) = +L_1$,
        \item $F(L_2) = -L_2$,
        \item $F(L_3) = -L_3$.
    \end{itemize}
Moreover, using the functions \verb|verify_hyperbolicity| and \verb|symmetry_group| of the \verb|Manifold| class in SnapPy \cite{snappy}, we can check that $L$ is hyperbolic and that $F_L$ is the unique symmetry on $(S^3, L)$. Thus, if $K$ admits a negative amphichiral involution $F$, then its restriction $F_v$ on $v$ should be the same as $F_L$.

Suppose that such an $F$ exists. Since $F_L$ sends $L_2$ and $L_3$ to their reverses, the restriction of $F$ on the subgraphs $\G_2$ and $\G_3$ induces positive amphichiral involutions $g_2$ and $g_3$ on $K_2$ and $K_3$, respectively. However, when $K_i$ for $i=2$ or $3$ is not strongly positive amphichiral, such a positive amphichiral involution for $K_i$ does not exist. Thus, $K$ cannot have such a negative amphichiral involution $F$. A similar argument works for the case when $K_1$ is weakly negative amphichiral.

However, when $K_2$ and $K_3$ admit weakly positive amphichiral maps $g_2$ and $g_3$, we can isotope $g_2$ and $g_3$ to make agree with $F_L$ on the common boundary components, namely, the edges $T_2$ and $T_3$. Similarly, by gluing the negative amphichiral involution of $K_1$ on $\G_1$ along $T_1$ via isotopy with $F_L$, we get a negative amphichiral map of $K$.

One can observe that the root $v$ is neither coherent nor incoherent since $T_1$ is coherently directed but $T_2$ and $T_3$ are incoherently oriented. Thus, the maximal coherent subtree of $\G_K$ is empty, which implies that $\G_K$ is properly incoherent if and only if the root of $\G_K$ is incoherent by Proposition~\ref{prop:coherent-properties} (3) and (4). Therefore, $\G_K$ is not properly incoherent.

On the other hand, the component $L_*$ is the knot $T_{2,3}\# \overline{T_{2, 3}}$. Since the Alexander polynomial of $K$ is the same as the one of $T_{2,3} \# \overline{T_{2,3}} \# K_1$ as \cite[Theorem~1]{Bud06}, if the Alexander polynomial of $K_1$ fails to satisfy the Fox--Milnor condition, then so does that of $K$, implying that $K$ is not slice.
\end{proof}

Since $\G_K$ is not properly incoherent, Theorem~\ref{thm:SNACK-concordance} cannot be applied. Moreover, there is no chance to use Lemma~\ref{lemma:splice_concordance} since the link $L$ is clearly not slice. Thus, we do not know if $K$ is concordant to a strongly negative amphichiral knot.

Following the proof of Theorem \ref{thm:link}, one can obtain an upper bound for $\Kaw(K)$ in terms of $\Kaw(K_i)$ as follows. Since $T_1$ is coherently directed, the edge-cut along $T_1$ gives the splice form
$$K = K_1\underset{K_1\sim L_1}{\bowtie}((K_2,K_3)\underset{K_i\sim L_i}{\bowtie} L).$$ Let $K'$ be the knot in the right-hand side of the first bow tie notation. Then by Lemma \ref{lemma:splice_concordance}, $K$ is concordant to $K'$ in $\natural^{\Kaw(K_1)} V$. Then now the root of $\G_{K'}$ is incoherent so that $K'$ is slice by Lemma \ref{lemma:incoherent_root}. Therefore, $$\Kaw(K)\le \Kaw(K_1).$$

We also provide a construction of weakly negative amphichiral knots $K$ that are \emph{topologically} slice. This relies on the fact that a knot of the trivial Alexander polynomial is topologically slice \cite{Fre82, FQ90}. On the other hand, we do not know whether $K$ is smoothly slice or not. As in Proposition~\ref{prop:cand1}, $\G_K$ is not properly incoherent. Since the Borromean ring $L$ is not slice, it is also impossible to apply Lemma~\ref{lemma:splice_concordance}. One can easily obtain $\Kaw(K)\le \Kaw(K_1)$.

\begin{prop}\label{prop:cand2}
    If $K_1$ is negative amphichiral and $K_2$ is positive amphichiral, then the knot $K$ in Figure~\ref{fig:cand2} is negative amphichiral whose $\G_K$ is not properly incoherent. Moreover, if $K_2$ is weakly positive amphichiral, then $K$ is weakly negative amphichiral.

    \begin{figure}[h]
  \includesvg[scale=0.8]{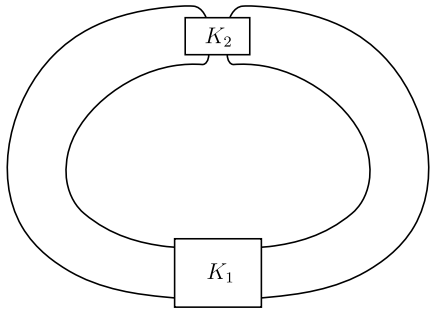}
  \caption{A weakly negative amphichiral knot $K$ which is topologically slice. $K$ does not have a properly incoherent JSJ structure.}
\label{fig:cand2}
\end{figure}

\end{prop}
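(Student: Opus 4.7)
The plan is to follow the template of the proof of Proposition~\ref{prop:cand1}. First, identify the companionship graph $\G_K$: it has a root $v$ labeled by the Borromean rings $L = L_* \cup L_1 \cup L_2$, where $L_*$ corresponds to $\partial E_K$, with the subgraphs $\G_{K_1}$ and $\G_{K_2}$ attached via edges $T_1$ and $T_2$ respectively. This yields the splice form $K = (K_1, K_2) \underset{K_i \sim L_i}{\bowtie} L$. A standard symmetry of the Borromean rings provides a $(2,1)$-amphichiral involution $F_L$ satisfying $F_L(L_*) = -L_*$, $F_L(L_1) = +L_1$, and $F_L(L_2) = -L_2$. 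By Lemma~\ref{lemma:extension}, this makes $T_1$ coherently directed and $T_2$ incoherently directed, consistent with $K_1$ being negative amphichiral and $K_2$ being positive amphichiral.

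To produce a negative amphichiral map of $K$, glue the given amphichiral maps of $K_1$ and $K_2$ with $F_L$ after isotoping them to agree on the tori $T_1$ and $T_2$, exactly as in Proposition~\ref{prop:cand1}. For the JSJ structure claim, note that the root $v$ has a coherently directed incident edge $T_1$ and an incoherently directed incident edge $T_2$, so $v$ is neither coherent nor incoherent. Consequently, by Proposition~\ref{prop:coherent-properties}(3) the maximal coherent subtree $\G_{max}$ is empty, and by Proposition~\ref{prop:coherent-properties}(4), since the root is not incoherent, $\G_K$ is not properly incoherent.

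The main content is showing that $K$ is weakly negative amphichiral whenever $K_2$ is. Suppose for contradiction that $K$ admits a strong negative amphichiral involution $F$. Its restriction $F_v$ on the JSJ piece $v$ is then an orientation-reversing involution of $(S^3, L)$ inducing the component-wise action $(-L_*, +L_1, -L_2)$. Using SnapPy (the functions \texttt{verify\_hyperbolicity} and \texttt{symmetry\_group} on the \texttt{Manifold} class), verify that $L$ is hyperbolic and that, among all involutions of $(S^3, L)$ realizing this specified action on the labeled components, $F_L$ is the unique one. Hence $F_v = F_L$, and by Lemma~\ref{lemma:extension} applied to the incoherently directed edge $T_2$, the restriction of $F$ to $\G_{K_2}$ induces a strong positive amphichiral involution of $K_2$, contradicting the weakness assumption. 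The principal technical obstacle is the SnapPy verification of uniqueness of the symmetry of the Borromean rings with the prescribed action on labeled components; once this is in hand, the rest of the argument is essentially identical to Proposition~\ref{prop:cand1}.
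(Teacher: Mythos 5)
Your proposal matches the paper's own proof essentially step for step: the same splice/companionship-graph setup, the same reflection symmetry $F_L$ of the Borromean rings with the prescribed action on components, the same SnapPy verification of hyperbolicity and uniqueness of that symmetry to rule out a strong involution when $K_2$ is weakly positive amphichiral, the same gluing-after-isotopy construction of the negative amphichiral map, and the same appeal to Proposition~\ref{prop:coherent-properties} for the failure of proper incoherence. No substantive differences to flag.
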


\begin{proof}

Consider the companionship graph  $\G_K$ of $K$ with the root $v$ illustrated in Figure~\ref{fig:cand2-grp}. Each subgraph $\G_i$ for $i=1, 2$ is the companionship graph $\G_{K_i}$ of $K_i$. Let $T_i$ be the edge from the root of $\G_i$ to $v$. Let $L$ be the corresponding link $L(v)$ to $v$ indexed by $\{*, 1, 2\}$ as in Figure~\ref{fig:cand2-grp}. Then the resulting knot $K$ has a splice form as follows: $$K=(K_1,K_2)\underset{K_i\sim L_i}{\bowtie} L.$$

\begin{figure}[h]    \includesvg[scale=0.6]{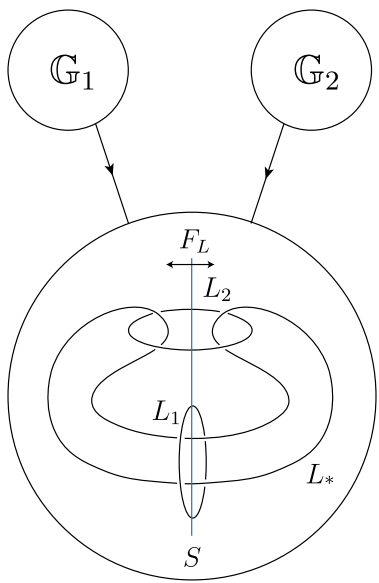}
  \caption{The companionship graph $\G_K$ of the knot $K$ in Figure \ref{fig:cand2}. The reflection $F_L$ along the blue-colored $2$-sphere $S$ is the $(2, 1)$-amphichiral involution on the Borromean ring $L$. Note that the component $L_1$ is contained in the sphere $S$.}
\label{fig:cand2-grp}
\end{figure}

Note that the link $L$ is the Borromean ring, which is hyperbolic. Observe that $L$ has an obvious $(2,1)$-amphichiral involution $F_L$ on $(S^3, L)$ by the reflection along the sphere $S$ such that
\begin{itemize}
\item $F(L_*) = -L_*$,
\item $F(L_1) = +L_1$,
\item $F(L_2) = -L_2$.
\end{itemize}
Moreover, it follows from SnapPy \cite{snappy} that $F_L$ is the unique symmetry on $(S^3, L)$ satisfying the properties above. Thus, if $K$ admits a negative amphichiral involution $F$, then its restriction $F_v$ on $v$ should be the same as $F_L$.

Suppose that such an $F$ exists. Since $F_L$ sends $L_2$ to its reverses, the restriction of $F$ on $\G_2$ induces a positive amphichiral involution $g$ on $K_2$. However, when $K_2$ is not strongly positive amphichiral, then such an involution does not exist. Thus, $K$ cannot have such a negative amphichiral involution $F$.

However, when there exists a positive amphichiral map $g$ which is not necessarily of order $2$, we can isotope $g$ to make it agree with $F_L$ on the common boundary component $T_2$. By gluing the negative amphichiral involution of $K_1$ on $\G_1$ along $T_1$ via isotopy with $F_L$, we get a negative amphichiral map of $K$.

Since $T_1$ is coherently directed and $T_2$ is incoherently directed, the root $v$ is neither coherent nor incoherent. Thus, by the same reason in Proposition~\ref{prop:cand1}, $\G_K$ is not properly incoherent.
\end{proof}

\bibliography{references}
\bibliographystyle{amsalpha}

\end{document}